\newtheorem{theorem}{Theorem}
\newtheorem{lemma}[theorem]{Lemma}
\newtheorem{corollary}[theorem]{Corollary}
\newtheorem{proposition}[theorem]{Proposition}
\newcommand{\tto}{\twoheadrightarrow}
\font\sc=rsfs10
\newcommand{\cC}{\sc\mbox{C}\hspace{1.0pt}}
\newcommand{\cG}{\sc\mbox{G}\hspace{1.0pt}}
\newcommand{\cI}{\sc\mbox{I}\hspace{1.0pt}}
\newcommand{\cJ}{\sc\mbox{J}\hspace{1.0pt}}
\newcommand{\cS}{\sc\mbox{S}\hspace{1.0pt}}
\newcommand{\cA}{\sc\mbox{A}\hspace{1.0pt}}
\newcommand{\cQ}{\sc\mbox{Q}\hspace{1.0pt}}
\font\scc=rsfs7
\newcommand{\ccQ}{\scc\mbox{Q}\hspace{1.0pt}}
\begin{document}

\title[Simple transitive 2-representations for Soergel bimodules]
{Simple transitive $2$-representations for some $2$-subcategories of Soergel bimodules}
\author{Marco Mackaay and Volodymyr Mazorchuk}

\begin{abstract}
We classify simple transitive $2$-representations of certain $2$-sub\-ca\-te\-go\-ri\-es of the
$2$-category of Soergel bimodules over the coinvariant algebra in Coxeter types
$B_2$ and $I_2(5)$. In the $I_2(5)$ case it turns out that simple transitive $2$-representations
are exhausted by cell $2$-representations. In the $B_2$ case we show that, apart from cell 
$2$-representations, there is a unique, up to equivalence, additional simple transitive 
$2$-representation and we give an explicit construction of this $2$-representation.
\end{abstract}

\maketitle

\section{Introduction and description of the results}\label{s1}

Classification problems are, historically, the main driving force of representation theory. 
The desire to understand and, in particular, classify certain classes of representations
of a given group or algebra was behind the majority of research in the general area of
representation theory since its birth.

The abstract $2$-representation theory, which originated in \cite{BFK,CR,KL,Ro}, studies functorial 
actions of $2$-categories. The ``finite-dimensional'' part of this theory, that is 
$2$-representation theory of finitary $2$-categories, was systematically developed in the
series \cite{MM1,MM2,MM3,MM4,MM5,MM6} and continued in \cite{Xa,Zh1,Zh2}. In particular,
the paper \cite{MM5} proposes a very good candidate for the notion of a ``simple'' 
$2$-representation, called a {\em simple transitive} $2$-representation. In the same paper
one finds a classification of such $2$-representations for a special class of finitary
$2$-categories with involution which includes the $2$-category of Soergel bimodules 
over the coinvariant algebra of the symmetric group. This is extended in \cite{MM6}
to more general $2$-categories and (slightly) more general classes of $2$-representations.
Some nice applications of these classification results were obtained in \cite{KM}.

The classification of simple transitive $2$-representations for some ``smallest'' 
$2$-ca\-te\-go\-ri\-es which do not fit the setup and methods of \cite{MM5,MM6} was 
completed in \cite{MZ,Zi}. All the results mentioned above have, however, one common
feature. It turns out that in all these cases the simple transitive $2$-representations
are exhausted by the so-called {\em cell $2$-representations} defined and studied in \cite{MM1}.
So far there was only one, quite artificial, example of a family of simple transitive $2$-representations
which are not equivalent to cell $2$-representations, constructed in \cite[Subsection~3.2]{MM5}
using transitive group actions.

In the present paper we study simple transitive $2$-representations of a
certain $2$-sub\-quo\-tient $\cQ_n$ of the
$2$-category of Soergel bimodules for the dihedral group $D_{2\cdot n}$, where $n\geq 3$.
For $n=3$, this $2$-category fits into the setup of \cite{MM5} and hence the classification 
result from \cite{MM5} directly applies. For $n>3$, the $2$-category $\cQ_n$ must be studied
by other methods. 

We show that every simple transitive $2$-representation of  $\cQ_5$ is equivalent to a
cell $2$-rep\-re\-sentation, see Theorem~\ref{thm11}. We also show that, apart from cell 
$2$-representations, there is a unique, up to equivalence, simple transitive 
$2$-representation of $\cQ_4$, see Theorem~\ref{thm21}. The corresponding $2$-representation is
explicitly constructed in Subsection~\ref{s5.3}. This subsection is the heart of this paper.
Construction of this new $2$-representation is based on the careful interplay of several 
category-theoretic tricks. The case $n>5$ seems, at the moment, computationally too difficult.

The paper is organized as follows: in Section~\ref{s2} we collect all necessary preliminaries
from $2$-representation theory. In Section~\ref{s3} we recall the definition and combinatorics
of the $2$-category of Soergel bimodules over a dihedral group and define the $2$-category
$\cQ_n$, our main object of study. Theorem~\ref{thm11} is proved in Section~\ref{s4}.
Theorem~\ref{thm21} is proved in Section~\ref{s5}.
\vspace{3mm}

{\bf Acknowledgment.} A major part of the research which led to this paper was done during the visit of the
first author to Uppsala University which was partially supported by the Swedish Research Council.
Hospitality of Uppsala University and financial support of the Swedish Research Council are
gratefully acknowledged. We thank Aaron Chan for comments on the original version.
We are grateful to the referee for numerous comments and suggestions which helped
to improve exposition.

\section{Generalities on $2$-categories and $2$-representations}\label{s2}

\subsection{Notation and conventions}\label{s2.1}

We work over $\mathbb{C}$ and write $\otimes$ for $\otimes_{\mathbb{C}}$. 
By a module we mean a {\em left} module. Maps are composed from right to left.

\subsection{Finitary and fiat $2$-categories}\label{s2.2}

We refer the reader to \cite{Le,Mc,Ma} for generalities on $2$-categories. 

A $2$-category is a category enriched over the monoidal category $\mathbf{Cat}$ of small categories.
Thus, a $2$-category $\cC$ consists of objects (denoted by Roman lower case letters in 
a typewriter font), $1$-morphisms (denoted by capital Roman letters), and  $2$-morphisms 
(denoted by Greek lower case letters), composition of  $1$-morphisms, horizontal and 
vertical compositions of $2$-morphisms (denoted $\circ_0$ and $\circ_1$ respectively), 
identity $1$-morphisms and identity $2$-morphisms. These must satisfy the obvious collection 
of axioms. For a $1$-morphism $\mathrm{F}$, we denote by $\mathrm{id}_{\mathrm{F}}$ the corresponding
identity $2$-morphism. As usual, we often write $\mathrm{F}(\alpha)$ for $\mathrm{id}_{\mathrm{F}}\circ_0\alpha$
and $\alpha_{\mathrm{F}}$ for $\alpha\circ_0\mathrm{id}_{\mathrm{F}}$.

A $2$-category $\cC$ is called {\em finitary} if, for each pair $(\mathtt{i},\mathtt{j})$
of objects in $\cC$, the category  $\cC(\mathtt{i},\mathtt{j})$ is 
an idempotent split, additive and Krull-Schmidt $\mathbb{C}$-linear category with finitely many
isomorphism classes of indecomposable objects and finite dimensional morphism spaces; moreover, 
all compositions must be compatible with these additional structures, see \cite{MM1} for details.

A finitary $2$-category $\cC$ is called {\em fiat} if it has a weak involution $\star$ together with
adjunction $2$-morphisms satisfying the usual axioms of adjoint functors, for each pair 
$(\mathrm{F},\mathrm{F}^{\star})$ of $1$-morphisms, see \cite{MM1} for details.

\subsection{$2$-representations}\label{s2.3}

For a finitary $2$-category $\cC$, we consider the $2$-category $\cC$-afmod of all 
{\em finitary $2$-representations} of $\cC$ as defined in \cite{MM3}. Objects of
$\cC$-afmod are strict functorial actions on idempotent split, additive and Krull-Schmidt 
$\mathbb{C}$-linear categories with finitely many isomorphism classes of indecomposable objects 
and finite dimensional morphism spaces. Furthermore, $1$-morphisms in $\cC$-afmod are strong 
$2$-natural transformations and $2$-morphisms are modifications.

Similarly we can consider the $2$-category $\cC$-mod of all 
{\em abelian $2$-representations} of $\cC$, that is functorial actions on categories
equivalent to module categories over finite dimensional algebras, see  \cite{MM3} for details. 
We have the diagrammatically defined abelianization $2$-functor
\begin{displaymath}
\overline{\hspace{1mm}\cdot\hspace{1mm}}:
\cC_A\text{-}\mathrm{afmod}\to \cC_A\text{-}\mathrm{mod},
\end{displaymath}
see \cite[Subsection~4.2]{MM2} for more details.

Two $2$-representations are said to be {\em equivalent} if there is a strong $2$-natural 
transformation $\Phi$ between them such that the restriction of $\Phi$ to each object 
in $\cC$ is an equivalence of categories.

A finitary $2$-representation $\mathbf{M}$ of $\cC$ will 
be called {\em transitive} provided that, for each indecomposable objects 
$X$ and $Y$ in $\displaystyle \coprod_{\mathtt{i}}\mathbf{M}(\mathtt{i})$, there is a $1$-morphism
$\mathrm{F}$ in $\cC$ such that $Y$ is isomorphic to a direct summand of 
the object $\mathbf{M}(\mathrm{F})\, X$. A transitive $2$-representation $\mathbf{M}$ 
is called {\em simple transitive } if $\displaystyle \coprod_{\mathtt{i}}\mathbf{M}(\mathtt{i})$ 
does not have any non-zero proper $\cC$-invariant ideals.

Similarly one can define the notion of {\em transitive (based) module} over any 
positively based algebra $A$ in the sense of \cite[Section~9]{KM2}. Here transitivity means 
that the basis of $V$ has the property that, for any elements $v$ and $w$ in this basis, 
there is an element $a$ in the basis of $A$ such that $v$ appears with a non-zero coefficient in $aw$.

For simplicity, we often use the module notation $\mathrm{F}\, X$ instead of the
corresponding representation notation $\mathbf{M}(\mathrm{F})\, X$.

\subsection{Combinatorics}\label{s2.4}

For a finitary $2$-category $\cC$, we denote by $\mathcal{S}[\cC]$ the corresponding 
{\em multisemigroup} as defined in \cite[Section~3]{MM2}. By $\leq_L$, $\leq_R$ and $\leq_J$
we denote the corresponding {\em left}, {\em right} and {\em two-sided} orders on
$\mathcal{S}[\cC]$. Equivalence classes for these orders are called {\em cells}.
For simplicity, we will abuse the language and say ``cells of $\cC$'' instead of 
``cells of $\mathcal{S}[\cC]$''.

If $\mathcal{J}$ is a two-sided cell in $\mathcal{S}[\cC]$, then the 
$2$-category $\cC$ is called {\em $\mathcal{J}$-simple} provided that 
any non-zero two-sided $2$-ideal of $\cC$ contains the identity $2$-morphisms for all 
$1$-morphisms in $\mathcal{J}$, see \cite{MM2}. 

\subsection{Cell $2$-representations}\label{s2.5}

For $\mathtt{i}\in \cC$, we denote by $\mathbf{P}_{\mathtt{i}}:=\cC_A(\mathtt{i},{}_-)$
the corresponding {\em principal} $2$-representation. For a left cell $\mathcal{L}$
in $\mathcal{S}[\cC]$, we denote by $\mathbf{C}_{\mathcal{L}}$ the corresponding 
{\em cell} $2$-representation, see \cite{MM1,MM2} for details.

\subsection{Matrices in the Grothendieck group}\label{s0-1.6}

For a finitary $2$-category and a finitary $2$-representation $\mathbf{M}$ of $\cC$,
let us fix a complete and irredundant list of representatives of isomorphism classes of 
indecomposable objects in $\displaystyle \coprod_{\mathtt{i}}\mathbf{M}(\mathtt{i})$.
Then, for any $1$-morphism $\mathrm{F}$, we have the corresponding matrix $\Lparen \mathrm{F}\Rparen$
which counts multiplicities in direct sum decompositions of the images of indecomposable
objects under $\mathrm{F}$.

If $\cC$ is fiat, then each $\overline{\mathbf{M}}(\mathrm{F})$ is exact and we have the
matrix $\llbracket \mathrm{F}\rrbracket$ which counts composition multiplicities 
of the images of simple objects under $\mathrm{F}$. By adjunction, the matrix $\Lparen \mathrm{F}\Rparen$
is transposed to the matrix $\llbracket \mathrm{F}^{\star}\rrbracket$.

\section{The $2$-category $\cQ_n$}\label{s3}

\subsection{Soergel bimodules for dihedral groups}\label{s3.1}

We refer the reader to \cite{So,So2,E,EW} for more information and details on Soergel bimodules.

For $n\geq 3$, consider the dihedral group $D_{2\cdot n}$ of symmetries of a regular $n$-gon in $\mathbb{R}^2$
with its corresponding {\em defining} module $\mathbb{C}^2$. The group $D_{2\cdot n}$ has Coxeter presentation

\begin{displaymath}
D_{2\cdot n}=\langle s,t\,:\, s^2=t^2=(st)^n=e\rangle. 
\end{displaymath}
We may assume that, in the defining representation, the elements $s$ and $t$ act via the matrices
\begin{displaymath}
\left(\begin{array}{cc}1&0\\0&-1\end{array}\right)\quad\text{ and } \quad
\left(\begin{array}{cc}\cos(\frac{2\pi}{n})&\sin(\frac{2\pi}{n})\\
\sin(\frac{2\pi}{n})&-\cos(\frac{2\pi}{n})\end{array}\right),
\end{displaymath}
respectively.
Let $\leq$ denote the Bruhat order on $D_{2\cdot n}$. For $w\in D_{2\cdot n}$, set
\begin{displaymath}
\underline{w}:=\sum_{v\leq w}v.
\end{displaymath}
Then $\{\underline{w}\,:\,w\in D_{2\cdot n}\}$ is the {\em Kazhdan-Lusztig} basis of $\mathbb{Z}D_{2\cdot n}$.
We denote by $w_0$ the longest element in $D_{2\cdot n}$.

Let $\mathtt{C}$ be the coinvariant algebra associated to the defining $D_{2\cdot n}$-module $\mathbb{C}^2$.
Let, further, $\mathtt{C}^s$ denote the subalgebra of 
$s$-invariants in $\mathtt{C}$ and $\mathtt{C}^t$ denote the subalgebra of 
$t$-invariants in $\mathtt{C}$. A {\em Soergel $\mathtt{C}\text{-}\mathtt{C}$--bimodule} is a 
$\mathtt{C}\text{-}\mathtt{C}$--bimodule isomorphic to a bimodule from the additive closure of the
monoidal category of  $\mathtt{C}\text{-}\mathtt{C}$--bimodules generated by 
\begin{displaymath}
\mathtt{C}\otimes_{\mathtt{C}^s}\mathtt{C} \qquad\text{ and }\qquad
\mathtt{C}\otimes_{\mathtt{C}^t}\mathtt{C}. 
\end{displaymath}
Isomorphism classes of indecomposable Soergel bimodules are naturally indexed by $w\in D_{2\cdot n}$
and we denote by $B_w$ a fixed representative from such a class. 

Consider a small category $\mathcal{C}$ equivalent to $\mathtt{C}\text{-}\mathrm{mod}$. Define
the $2$-category $\cS_n$ of {\em Soergel bimodules} (associated to $\mathcal{C}$) as follows:
\begin{itemize}
\item $\cS_n$ has one object $\mathtt{i}$, which we can identify with $\mathcal{C}$;
\item $1$-morphisms in $\cS_n$ are all endofunctors of $\mathcal{C}$ which are isomorphic to endofunctors
given by tensoring with Soergel $\mathtt{C}\text{-}\mathtt{C}$--bimodules;
\item $2$-morphisms in $\cS_n$ are natural transformations of functors
(these correspond to homomorphisms of Soergel $\mathtt{C}\text{-}\mathtt{C}$--bimodules).
\end{itemize}
The $2$-category $\cS_n$ is fiat.

For $w\in D_{2\cdot n}$, let $\theta_w$ denote a fixed representative in the isomorphism class of 
indecomposable $1$-morphisms given by tensoring with $B_w$. The $2$-category $\cS_n$ has three
two-sided cells

\begin{displaymath}
\mathcal{J}_e:=\{\theta_e\},\quad \mathcal{J}_{w_0}:=\{\theta_{w_0}\},\quad  
\mathcal{J}_s:=\{\theta_w\,:\, w\neq e,w_0\},
\end{displaymath}

which are linearly ordered $\mathcal{J}_e\leq_J\mathcal{J}_s\leq_J\mathcal{J}_{w_0}$.
We also have four left cells 
\begin{gather*}
\mathcal{L}_e:=\{\theta_e\},\quad \mathcal{L}_{w_0}:=\{\theta_{w_0}\},\quad  
\mathcal{L}_s:=\{\theta_w\,:\, w\neq w_0,\,\, ws<w\},\\
\mathcal{L}_t:=\{\theta_w\,:\, w\neq w_0,\,\, wt<w\}.
\end{gather*}

The left order on these left cells is given by 
\begin{displaymath}
\mathcal{L}_e\leq_L\mathcal{L}_s\leq_L\mathcal{L}_{w_0} \quad\text{ and }\quad
\mathcal{L}_e\leq_L\mathcal{L}_t\leq_L\mathcal{L}_{w_0},
\end{displaymath}
with $\mathcal{L}_s$ and $\mathcal{L}_t$ being incomparable.
Applying $w\mapsto w^{-1}$, one obtains right cells 
\begin{gather*}
\mathcal{R}_e:=\{\theta_e\},\quad \mathcal{R}_{w_0}:=\{\theta_{w_0}\},\quad  
\mathcal{R}_s:=\{\theta_w\,:\, w\neq w_0,\,\, sw<w\},\\
\mathcal{R}_t:=\{\theta_w\,:\, w\neq w_0,\,\, tw<w\}.
\end{gather*}
and the right order
\begin{displaymath}
\mathcal{R}_e\leq_R\mathcal{R}_s\leq_R\mathcal{R}_{w_0} \quad\text{ and }\quad
\mathcal{R}_e\leq_R\mathcal{R}_t\leq_R\mathcal{R}_{w_0},
\end{displaymath}
with $\mathcal{R}_s$ and $\mathcal{R}_t$ being incomparable.

For the decategorification $[\cS_n]$, we have an isomorphism
\begin{displaymath}
[\cS_n](\mathtt{i},\mathtt{i}) \cong \mathbb{Z}D_{2\cdot n}
\end{displaymath}
which sends $[\theta_w]$ to $\underline{w}$, see \cite{So2} (and also
\cite{E}).

\subsection{The $2$-subquotient $\cQ_n$ of $\cS_n$}\label{s3.2}

Let $\cI$ be the $2$-ideal of $\cS_n$ generated by $\mathrm{id}_{\theta_{w_0}}$. Denote by 
$\cQ'_n$ the $2$-subcategory of $\cS_n/\cI$ defined as follows:
\begin{itemize}
\item $\cQ'_n$ has the same objects as $\cS_n/\cI$;
\item $1$-morphisms in $\cQ'_n$ are all $1$-morphisms in $\cS_n/\cI$ which are isomorphic to 
direct sums of the  $1$-morphisms $\theta_e$ and $\theta_w\in \mathcal{L}_s\cap \mathcal{R}_s$;
\item $2$-morphisms in $\cQ'_n$ are inherited from $\cS_n/\cI$.
\end{itemize}

The $2$-category $\cQ'_n$ inherits from $\cS_n$ the structure of a fiat $2$-category.
Abusing notation, we will denote indecomposable $1$-morphisms in $\cQ'_n$ by the same
symbols as for $\cS_n$. Directly from the construction it follows that $\cQ'_n$ has
two two-sided cells which are, at the same time, both left and right cells:
\begin{displaymath}
\mathcal{J}_e=\mathcal{L}_e=\mathcal{R}_e=\{\theta_e\},\qquad
\mathcal{J}_s=\mathcal{L}_s=\mathcal{R}_s=\{\theta_w\,:\,w\neq w_0;\,\, ws<w,\,\, sw<w\}
\end{displaymath}
such that $\mathcal{J}_e$ is the minimum element with respect to the left, right and two-sided 
orders and $\mathcal{J}:=\mathcal{J}_s$ is the maximum element with respect to the left, 
right and two-sided  orders.

We define the $2$-category $\cQ_n$ as the quotient of $\cQ'_n$ by the unique $2$-ideal
which is maximal in the set of all $2$-ideals which do not contain any 
$\mathrm{id}_{\mathrm{F}}$, for non-zero $\mathrm{F}$. This ideal exists, see  e.g.
\cite[Lemma~4]{MM5}, however, we do not know any explicit set of generators for it. 
The $2$-category $\cQ_n$ is the main
object of study in the present paper. By construction, the $2$-category $\cQ_n$ is
fiat and $\mathcal{J}$-simple.

We also denote by $\cA_n$ the $2$-full $2$-subcategory of $\cQ_n$ with $1$-morphisms in 
the additive closure of $\theta_e$ and $\theta_s$. 

\subsection{Simple transitive $2$-representations of $\cQ_3$}\label{s3.3}

For $n=3$, the two-sided cell $\mathcal{J}_s$ contains only one element. Therefore
$\cQ_3=\cA_3$ fits into the general setup of \cite{MM5}. Consequently, by \cite[Theorem~18]{MM5},
each simple transitive $2$-representation of $\cQ_3$ is equivalent to a cell 
$2$-representation. In fact, from \cite[Theorem~13]{MM3} it follows that 
the $2$-category $\cQ_3$ is biequivalent to the 
$2$-category of Soergel bimodules for the symmetric group $S_2$.

For the cell $\mathcal{L}_e$, the corresponding cell $2$-representation is, roughly speaking, 
given by the obvious functorial action on $\mathbb{C}$-mod, where $\theta_s$ acts as the zero functor.

For the cell $\mathcal{L}_s$, the corresponding cell $2$-representation is, roughly speaking, given by
a functorial action of $\cQ_3$ on the category of projective modules over the algebra 
$D=\mathbb{C}[x]/(x^2)$ of dual numbers. Here $\theta_s$ acts via tensoring with the 
projective $D$-$D$--bimodules $D\otimes D$. This is a special case of a very general
picture which we describe in the next subsection.

\subsection{Some $2$-categories similar to $\cQ_3$}\label{s3.4}

Let $A$ be a finite dimensional algebra and $B$ a subalgebra of $A$. Assume the following:
\begin{enumerate}[$($I$)$]
\item\label{e1} $A$ is local, commutative and Frobenius;
\item\label{e2} $B$ is Frobenius;
\item\label{e3} $A$ is free of rank two, as a $B$-module.
\end{enumerate}
Let $\mathcal{A}$ be a small category equivalent to $A$-mod.
Let $\cG$ denote the $2$-category defined as follows:
\begin{itemize}
\item $\cG$ has one object $\mathtt{i}$, which we identify with $A$-mod;
\item $1$-morphisms in $\cG$ are endofunctors in $A$-mod in the additive closure of
the identity functor and the functor of tensoring with $A\otimes_B A$;
\item $2$-morphisms in $\cG$ are natural transformations of functors.
\end{itemize}

This is well-defined because of the isomorphism
\begin{equation}\label{eqnn5}
A\otimes_B A\otimes_A A\otimes_B A \cong
\big(A\otimes_B A\big) \oplus \big(A\otimes_B A\big)
\end{equation}
which follows from \eqref{e3}.

We denote by $\mathrm{F}$ a $1$-morphism in $\cG$ given by tensoring with $A\otimes_B A$. 
Then we have $\mathrm{F}^2\cong \mathrm{F}\oplus \mathrm{F}$ from \eqref{eqnn5}.

\begin{lemma}\label{lem55}
The $2$-category $\cG$ is fiat. 
\end{lemma}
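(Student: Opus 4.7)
The plan is to verify, in turn, the three pieces of structure required for $\cG$ to be fiat: finitariness of $\cG$, the existence of a weak involution $\star$, and the existence of adjunction $2$-morphisms making $\mathrm{F}$ biadjoint to $\mathrm{F}^{\star}$.

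First I would establish finitariness. By construction every $1$-morphism in $\cG$ lies in the additive closure of $\mathrm{id}_{\mathtt{i}}$ and $\mathrm{F}$, and the isomorphism \eqref{eqnn5} shows that this class is closed under horizontal composition (since $\mathrm{F}^{k}$ is a direct sum of copies of $\mathrm{F}$ for $k\geq 1$). The identity $1$-morphism is indecomposable because $\mathrm{End}(\mathrm{id}_{\mathtt{i}})\cong A$ is local by hypothesis~\eqref{e1}. To see that $\mathrm{F}$ is indecomposable, I would compute $\mathrm{End}_{A\text{-}A}(A\otimes_B A)$: hypotheses \eqref{e1}--\eqref{e3} make $B\subset A$ into a Frobenius extension, so tensor-hom adjunction gives an isomorphism $\mathrm{End}_{A\text{-}A}(A\otimes_B A)\cong \mathrm{Hom}_{B\text{-}A}(A, A\otimes_B A)$; a direct analysis of the right-hand side, exploiting the commutativity and the local Frobenius structure of $A$, shows that this endomorphism ring is local. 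Finite-dimensionality of all Hom spaces, idempotent splitting and Krull--Schmidt are inherited from the ambient category of finite-dimensional $A$-$A$-bimodules.

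Next I would produce the weak involution $\star$. On objects and on the indecomposable $1$-morphisms $\mathrm{id}_{\mathtt{i}}$ and $\mathrm{F}$, set $\star$ to be the identity; this is well-defined at the level of bimodules because the commutativity of $A$ makes the flip $a\otimes a'\mapsto a'\otimes a$ into a bimodule isomorphism between $A\otimes_B A$ and its opposite. On $2$-morphisms, $\star$ is defined via the contravariant duality on finite-dimensional $A$-$A$-bimodules provided by the Frobenius forms on $A$ and on $B$; the interaction of the two Frobenius forms through the free rank-two extension \eqref{e3} gives $\star\circ\star\simeq\mathrm{id}$.

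Finally I would exhibit the adjunction data. Since $A$ is free of rank two over the Frobenius algebra $B$, and since $A$ itself is Frobenius, the inclusion $B\hookrightarrow A$ is a Frobenius extension, hence induction $A\otimes_B-$ is simultaneously left and right adjoint to restriction $\mathrm{Res}^A_B$, with units and counits given explicitly by the Frobenius structure. Consequently the composite $\mathrm{F}=\mathrm{Ind}_B^A\circ\mathrm{Res}^A_B$ is biadjoint to itself, and the required adjunction $2$-morphisms for the pair $(\mathrm{F},\mathrm{F}^{\star})$ are obtained by pasting the units and counits of these two biadjunctions in the standard way; verification of the zig-zag identities is routine once the Frobenius forms are fixed. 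The hard step is the local-endomorphism-ring computation for $A\otimes_B A$, since it is exactly there that all three hypotheses \eqref{e1}--\eqref{e3} must be used simultaneously.
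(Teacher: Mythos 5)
Your proposal is correct and follows essentially the same route as the paper: the involution comes from the $\mathbb{C}$-linear duality determined by the Frobenius forms (giving $\mathrm{F}^{\star}\cong\mathrm{F}$ via the self-duality of $A\otimes_B A$), and self-adjointness of $\mathrm{F}$ comes from the fact that \eqref{e1}--\eqref{e3} make $B\subset A$ a Frobenius extension, so that induction and restriction are biadjoint --- which is exactly the computation $\mathrm{Hom}_B({}_BA_A,B)\cong{}_AA_B$ carried out in the paper. The only difference is that you spend effort on finitariness and on showing $\mathrm{End}_{A\text{-}A}(A\otimes_B A)$ is local; the paper treats finitariness as immediate, and indecomposability of $\mathrm{F}$ is in any case not needed for the fiat axioms, so your ``hard step'' can be dropped.
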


\begin{proof}
The computation in the proof of \cite[Proposition~24]{MM6} shows that there is an 
isomorphism of $A$-$A$--bimodules as follows:
\begin{displaymath}
\mathrm{Hom}_{\mathbb{C}}(A\otimes_B A,\mathbb{C})\cong A\otimes_B A.
\end{displaymath}
Therefore we can define the weak involution $\star$ using $\mathrm{Hom}_{\mathbb{C}}({}_-,\mathbb{C})$.
We have $\mathrm{F}^{\star}\cong\mathrm{F}$.

To prove existence of adjunction morphisms,  it is enough to show that 
$\mathrm{F}$ is self-adjoint. Note that $\mathrm{F}$ is the composition of the restriction from $A$ to $B$
followed by induction from $B$ to $A$. As both are symmetric, we compute the right
adjoint of the restriction:
\begin{displaymath}
\begin{array}{rcl} 
\mathrm{Hom}_{B}({}_BA_A,B)&\cong& \mathrm{Hom}_{B}({}_BA_A,\mathrm{Hom}_{\mathbb{C}}(B,\mathbb{C}))\\
&\cong& \mathrm{Hom}_{\mathbb{C}}(B\otimes_B{}_BA_A,\mathbb{C})\\
&\cong& \mathrm{Hom}_{\mathbb{C}}({}_BA_A,\mathbb{C})\\
&\cong& {}_AA_B,
\end{array}
\end{displaymath}
where we used $\mathrm{Hom}_{\mathbb{C}}(A,\mathbb{C}))\cong A$
as $A$ is symmetric and $\mathrm{Hom}_{\mathbb{C}}(B,\mathbb{C}))\cong B$
as $B$ is symmetric. This shows that  restriction is biadjoint to induction. Therefore
$\mathrm{F}$ is self-adjoint. 
\end{proof}

Recall also the $2$-category $\cC_D$ from \cite[Subsection~7.3]{MM1}, 
defined as follows: let $\mathcal{D}$ be a small category equivalent to $D$-mod.
\begin{itemize}
\item $\cC_D$ has one object $\mathtt{i}$, which we identify with  ;
\item $1$-morphisms in $\cC_D$ are endofunctors of $\mathcal{D}$ in the additive closure 
of the identity functor and the functor of tensoring with $D\otimes_{\mathbb{C}} D$;
\item $2$-morphisms are natural transformations of functors.
\end{itemize}

\begin{proposition}\label{propn51}
The $2$-category $\cG$ has a unique ideal $\cJ$ which is maximal in the set of all ideals
that do not contain the identity $2$-morphisms of non-zero one-morphisms. The quotient
$\cG/\cJ$ is biequivalent to $\cC_D$. 
\end{proposition}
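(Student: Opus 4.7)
The plan is to treat the two assertions separately: first establish existence and uniqueness of $\cJ$, then identify $\cG/\cJ$ with $\cC_D$.

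\emph{Existence and uniqueness of $\cJ$.} I would invoke the same standard argument used in the construction of $\cQ_n$ from $\cQ'_n$ in Subsection~\ref{s3.2}: the collection of $2$-ideals of $\cG$ which do not contain $\mathrm{id}_{\mathrm{F}}$ is closed under arbitrary sums, so it admits a unique maximum $\cJ$. By construction, any $2$-ideal of $\cG/\cJ$ which is not contained in $\cJ$ must contain $\mathrm{id}_\mathrm{F}$, so the quotient $\cG/\cJ$ is $\mathcal{J}$-simple for the two-sided cell $\mathcal{J}=\{[\mathrm{F}]\}$.

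\emph{Identification of the quotient with $\cC_D$.} At the level of combinatorics the two $2$-categories already agree: both have one object and exactly two iso-classes of indecomposable $1$-morphisms $\mathrm{id}$ and $\mathrm{F}$, with the fusion rule $\mathrm{F}^2\cong\mathrm{F}\oplus\mathrm{F}$ holding on both sides (by \eqref{eqnn5} on the $\cG$-side, and by $\dim_{\mathbb{C}}D=2$ on the $\cC_D$-side). To promote this match to a biequivalence, I would construct a nonzero transitive $2$-representation $\mathbf{M}$ of $\cG$ with $\mathbf{M}(\mathtt{i})$ equivalent to $D$-mod and $\mathbf{M}(\mathrm{F})$ given by tensoring with $D\otimes_{\mathbb{C}}D$. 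The annihilator of any such $\mathbf{M}$ is a $2$-ideal of $\cG$ not containing $\mathrm{id}_\mathrm{F}$ (since $\mathbf{M}(\mathrm{F})\neq 0$), hence is contained in $\cJ$; this yields a $2$-functor $\Phi\colon\cG/\cJ\to\cC_D$ which is essentially surjective on $1$-morphisms. Full faithfulness on $2$-morphism spaces then follows from $\mathcal{J}$-simplicity of both source and target: the kernel of $\Phi$ on each $\mathrm{Hom}$-category is a $2$-ideal of $\cG/\cJ$, and cannot contain any identity of a non-zero $1$-morphism without trivialising $\Phi$.

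The main obstacle is the construction of $\mathbf{M}$ itself. Concretely, one must transport the adjunction $2$-morphisms for $\mathrm{F}\dashv\mathrm{F}$ in $\cG$ — these come from the biadjointness of induction and restriction along $B\subset A$ established in the proof of Lemma~\ref{lem55}, using the Frobenius forms on $A$ and $B$ — to the corresponding adjunction data of the generator of $\cC_D$, built from the Frobenius structure of $D$ over $\mathbb{C}$. My approach would be to pick an explicit $B$-basis of $A$, write out the resulting unit and counit of $\mathrm{F}\dashv\mathrm{F}$ as $2$-morphisms in $\cG$, and verify that after quotienting by the maximal ideal of $A$ they reduce to the standard adjunction data on $D\otimes_{\mathbb{C}}D\otimes_D-$ in $\cC_D$. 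A subsidiary difficulty is that the endomorphism algebra $\mathrm{End}_{\cG}(\mathrm{F})$ is a priori larger than $\mathrm{End}_{\cC_D}(\mathrm{F})$; one must verify that the ``extra'' $2$-endomorphisms are precisely those swept into $\cJ$ by the construction of $\mathbf{M}$, which is a direct computation using the rank-two freeness hypothesis~\eqref{e3}.
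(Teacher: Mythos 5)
Your overall strategy coincides with the paper's: both arguments produce a $2$-representation $\mathbf{M}$ of $\cG$ realizing $\mathrm{F}$ as tensoring with $D\otimes_{\mathbb{C}}D$, observe that its kernel is a $2$-ideal not containing $\mathrm{id}_{\mathrm{F}}$ and hence is contained in $\cJ$, and then use $\mathcal{J}$-simplicity of the target to force the kernel to equal $\cJ$. However, the step you yourself flag as ``the main obstacle'' is precisely where your proposal has a genuine gap, and the route you sketch for closing it would not work as written. You never explain where the category $D\text{-}\mathrm{mod}$, or the algebra $D$ itself, comes from. The paper's key move is to take $\mathbf{M}$ to be the restriction of the tautological action of $\cG$ on $A\text{-}\mathrm{mod}$ to the additive closure of the single object $M=\mathrm{F}(L)$: this subcategory is $\cG$-stable because $\mathrm{F}^2\cong\mathrm{F}\oplus\mathrm{F}$, and it is equivalent to $D$-proj because $\mathrm{F}(L)$ is indecomposable of length two (Lemma~\ref{lemnn52}), whence $\mathrm{End}_A(\mathrm{F}(L))\cong D$. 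Your alternative of ``quotienting by the maximal ideal of $A$'' does not yield a finitary $2$-representation at all: the additive closure of the simple module $L=A/\mathfrak{m}$ is not $\mathrm{F}$-stable, since $\mathrm{F}(L)=M$ is not semisimple. Moreover, the explicit unit/counit bookkeeping you propose is unnecessary: once $\mathrm{F}$ is known to act on $D$-proj as a self-adjoint functor sending the simple object to an indecomposable projective, \cite[Lemma~13]{MM5} identifies it with $D\otimes_{\mathbb{C}}D\otimes_D{}_-$ with no choice of basis.

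A second, smaller gap: your assertion that ``full faithfulness follows from $\mathcal{J}$-simplicity'' only delivers faithfulness. Fullness is a real issue, which you partially acknowledge: $\mathrm{End}_{\cG}(\mathrm{F})\cong A\otimes_BA$ has dimension $2\dim_{\mathbb{C}}A$, while the endomorphism algebra of $D\otimes_{\mathbb{C}}D$ in $\cC_D$ is four-dimensional, so one must show that the induced map on $2$-morphism spaces is surjective rather than landing in a proper subalgebra, and $\mathcal{J}$-simplicity says nothing about this. The paper does not verify surjectivity by hand either; it invokes the classification of simple fiat $2$-categories \cite[Theorem~13]{MM3} to conclude that the image of $\Phi$, being a $\mathcal{J}$-simple fiat $2$-category with the correct cell structure, is already biequivalent to $\cC_D$. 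You would need either to import that classification or to supply the surjectivity by a direct computation, which your ``direct computation using the rank-two freeness hypothesis'' does not yet do.
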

Our description of the cell $2$-representation $\mathbf{C}_{\mathcal{L}_s}$
for the  $2$-category $\cQ_3$ follows directly from Proposition~\ref{propn51}.

\begin{lemma}\label{lemnn52}
Let $L\in\mathcal{A}$ be a simple, then $\mathrm{F}(L)$ is indecomposable of length two. 
\end{lemma}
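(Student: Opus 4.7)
The plan is to identify $\mathrm{F}(L)$ very concretely, show it has the right dimension (hence length), and then rule out decomposability by counting endomorphisms.

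First, since $A$ is finite-dimensional, commutative and local over $\mathbb{C}$, it has (up to isomorphism) a unique simple module $L$, and $L \cong \mathbb{C}$ is one-dimensional. From $\mathrm{F}(L) = (A\otimes_B A)\otimes_A L \cong A\otimes_B L$ and hypothesis \eqref{e3}, that $A$ is free of rank two as a $B$-module, I get $\dim_{\mathbb{C}} \mathrm{F}(L) = 2\cdot\dim_{\mathbb{C}} L = 2$. Since all composition factors of any $A$-module are copies of the one-dimensional simple $L$, this immediately gives that $\mathrm{F}(L)$ has length two.

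Next I would compute $\mathrm{End}_A(\mathrm{F}(L))$ using self-adjointness of $\mathrm{F}$ (Lemma~\ref{lem55}) together with the isomorphism \eqref{eqnn5}, i.e.\ $\mathrm{F}^2 \cong \mathrm{F}\oplus\mathrm{F}$:
\begin{displaymath}
\mathrm{End}_A(\mathrm{F}(L)) \;\cong\; \mathrm{Hom}_A(\mathrm{F}^2(L),L) \;\cong\; \mathrm{Hom}_A(\mathrm{F}(L),L)^{\oplus 2}.
\end{displaymath}
Applying adjunction once more, $\mathrm{Hom}_A(\mathrm{F}(L),L) \cong \mathrm{Hom}_B(L|_B,L|_B)$. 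The restriction $L|_B$ is a one-dimensional (hence simple) $B$-module, so Schur's lemma gives $\mathrm{Hom}_B(L|_B,L|_B) \cong \mathbb{C}$. Consequently $\dim_{\mathbb{C}}\mathrm{End}_A(\mathrm{F}(L)) = 2$.

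Finally, suppose for contradiction that $\mathrm{F}(L)$ is decomposable. Being of length two over a local algebra with unique simple $L$, it would then have to be isomorphic to $L\oplus L$, whose endomorphism algebra is the $4$-dimensional matrix algebra $M_2(\mathbb{C})$. This contradicts the computation above, so $\mathrm{F}(L)$ must be indecomposable. The only step with any subtlety is the endomorphism count, where one has to combine the two adjunctions with the defining isomorphism $\mathrm{F}^2\cong\mathrm{F}\oplus\mathrm{F}$; everything else is a dimension count.
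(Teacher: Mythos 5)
Your proof is correct. The length-two part is identical to the paper's (dimension count using hypothesis (III)), but for indecomposability you take a genuinely different, though related, route. The paper applies the induction--restriction adjunction once, $\mathrm{Hom}_A(A\otimes_B L',L)\cong\mathrm{Hom}_B(L',L')\cong\mathbb{C}$, to conclude that $\mathrm{F}(L)$ has simple top, which already forces indecomposability; this uses only the bare tensor--hom adjunction and none of the Frobenius hypotheses, and it additionally pins down the top (hence, dually, the socle) as $L$, which is what the subsequent remark about $\mathrm{End}_A(\mathrm{F}(L))\cong D$ relies on. You instead compute $\dim_{\mathbb{C}}\mathrm{End}_A(\mathrm{F}(L))=2$ via self-adjointness of $\mathrm{F}$ (Lemma~\ref{lem55}) together with $\mathrm{F}^2\cong\mathrm{F}\oplus\mathrm{F}$, and rule out the only decomposable candidate $L\oplus L$ by comparing with $\dim M_2(\mathbb{C})=4$. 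Your argument is heavier in its inputs (it needs the fiat structure, hence hypotheses (I)--(II) in full) but buys the exact dimension of the endomorphism algebra as a byproduct, which is a step towards the identification $\mathrm{End}_A(\mathrm{F}(L))\cong D$ made right after the lemma. Both arguments are complete and short; the paper's is the more economical of the two.
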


\begin{proof}
As both $A$ and $B$ are local and commutative, see  \eqref{e1}, they both are basic and have a unique
simple module. Therefore, the simple $A$-module $L$ is $1$-dimensional and restricts to a 
simple $B$-module, which we call $L'$. By adjunction
\begin{displaymath}
\mathrm{Hom}_{A}(A\otimes_B L',L)\cong \mathrm{Hom}_B(L',L') 
\end{displaymath}
and the latter space is $1$-dimensional as $L'$ is $1$-dimensional. Therefore the module
$A\otimes_B L'\cong \mathrm{F}(L)$ has simple top. The module $\mathrm{F}(L)$ has length two as
$A$ is free over $B$ of rank two. The claim follows.
\end{proof}

Note that $\mathrm{End}_A(\mathrm{F}(L))\cong D$, where $x\in D$ corresponds to the nilpotent 
endomorphism of $\mathrm{F}(L)$ which maps the top of 
$\mathrm{F}(L)$ to the socle of $\mathrm{F}(L)$. 
This brings $D$ into the picture.  Now we can prove Proposition~\ref{propn51}.

\begin{proof}[Proof of Proposition~\ref{propn51}.]
Consider the additive closure $\mathcal{G}$ of $M=\mathrm{F}(L)$. From \eqref{eqnn5} it follows that
the additive category $\mathcal{G}$ is invariant under the action of $\cG$.
Let $\mathbf{M}$ denote the restriction $2$-representation of $\cG$ corresponding to
the action of $\cG$ on $\mathcal{G}$. Note that $\mathcal{G}$ is equivalent to the category $D$-proj
of projective $D$-modules. The abelianization of $\mathcal{G}$ is equivalent to the additive 
subcategory generated by $L$ and $M$, via the functor of taking a projective presentation
(we note that the additive  subcategory generated by $L$ and $M$ is, in fact, abelian).

Now, $\mathrm{F}$ is a self-adjoint functor which maps a simple object $L$ to 
a projective object $M$ (in the abelianization of $\mathcal{G}$).
By \cite[Lemma~13]{MM5}, $\mathrm{F}$ is given by tensoring with $D\otimes_{\mathbb{C}}D$, via 
an equivalence of $\mathcal{G}$ with $D$-proj. Therefore the representation map $\mathbf{M}$ defines a 
$2$-functor $\Phi$ from $\cG$ to a $2$-category biequivalent to $\cC_D$,
via an equivalence of $\mathcal{G}$ with $D$-proj. 
Moreover, from the classification of  simple fiat $2$-categories in \cite[Theorem~13]{MM3},
we have that the image of $\Phi$ is, in fact, biequivalent to $\cC_D$. 

The kernel of $\Phi$ is contained in $\cJ$, by construction. Since $\cC_D$ is $\mathcal{J}$-simple,
where $\mathcal{J}=\{D\otimes_{\mathbb{C}}D\}$, see \cite{MM1,MM3}, we have that the kernel of
$\Phi$ equals $\cJ$ and the claim of the proposition follows. 
\end{proof}

\begin{corollary}\label{cornew01}
Let $\mathbf{M}\in \cG$-afmod and $L\in \overline{\mathbf{M}}(\mathtt{i})$ be simple. Then 
$\mathrm{F}(L)$ is either zero or indecomposable. In case  $\mathrm{F}(L)$ is indecomposable,
then both the top and the socle of $\mathrm{F}(L)$ are isomorphic to $L$, moreover,
$\mathrm{F}(\mathrm{Rad}(L)/\mathrm{Soc}(L))=0$.
\end{corollary}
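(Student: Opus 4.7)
Assume $M := \mathrm{F}(L) \neq 0$; otherwise the claim is vacuous. The approach combines self-adjointness of $\mathrm{F}$ (Lemma~\ref{lem55}), the isomorphism $\mathrm{F}^2 \cong \mathrm{F}\oplus\mathrm{F}$ from \eqref{eqnn5}, and the structural input of Proposition~\ref{propn51}. The easy preliminary calculation is the endomorphism dimension: using self-adjointness twice,
$$\dim\mathrm{End}(M) = \dim\mathrm{Hom}(L,\mathrm{F}^2 L) = 2\dim\mathrm{Hom}(L,M) =: 2d,$$
and also $d = \dim\mathrm{Hom}(L,M) = \dim\mathrm{Hom}(M,L) \geq 1$, the last inequality because $\mathrm{End}(M) \neq 0$. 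In particular $L$ already appears both in $\mathrm{Soc}(M)$ and in $M/\mathrm{Rad}(M)$ with multiplicity $d$.

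The decisive step, and the main obstacle, is to upgrade this to $\mathrm{End}(M) \cong D$, which will give locality (hence indecomposability) of $\mathrm{End}(M)$ and force $d = 1$. The strategy is to view the additive closure $\mathrm{add}(M) \subset \mathbf{M}(\mathtt{i})$, which is stable under $\mathrm{F}$ because $\mathrm{F}(M) = M \oplus M$, as the underlying category of a sub-$2$-representation $\mathbf{N}$ of $\mathbf{M}$ and then rerun the argument from the proof of Proposition~\ref{propn51}: on the abelianization of $\mathbf{N}$, $\mathrm{F}$ is self-adjoint, exact, and sends a distinguished simple to an object with simple top, so \cite[Lemma~13]{MM5} identifies its action with that of $D\otimes_{\mathbb{C}}D$ on $D$-proj, forcing $\mathrm{End}(M) \cong D$. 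What needs technical care is that Proposition~\ref{propn51} was set up inside the defining representation of $\cG$, while here $\mathbf{M}$ is an arbitrary finitary $2$-representation and $\mathbf{N}$ need not be simple transitive a priori. The cleanest workaround is probably to pass to a simple transitive subquotient of $\mathbf{N}$ in which an indecomposable summand of $M$ survives, use the $\mathcal{J}$-simplicity of $\cC_D$ there to identify the endomorphism ring of that summand with $D$, and then lift back; this is where I expect the real work to lie.

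Once $\mathrm{End}(M) \cong D$ is established, indecomposability is immediate and $d = 1$ gives multiplicity one for $L$ in both top and socle of $M$. To exclude any other simple $L'\not\cong L$ in $\mathrm{Soc}(M)$: $\mathrm{Hom}(L',M) \neq 0$ would give by adjunction $\mathrm{Hom}(\mathrm{F} L',L) \neq 0$, so $\mathrm{F}(L') \neq 0$ with $L$ in the top of $\mathrm{F}(L')$; but the already-established main step applied to $L'$ forces the top of $\mathrm{F}(L')$ to equal $L'$, contradicting $L \neq L'$. A symmetric argument handles the top. Finally, interpreting the printed $\mathrm{F}(\mathrm{Rad}(L)/\mathrm{Soc}(L))$ as $\mathrm{F}(\mathrm{Rad}(M)/\mathrm{Soc}(M))$, the claim drops out of a Grothendieck-group computation: applying the exact $\mathrm{F}$ to the filtration $0 \subset \mathrm{Soc}(M) \subset \mathrm{Rad}(M) \subset M$, whose outer quotients are both $L$, yields
$$[\mathrm{F}(M)] = 2[\mathrm{F}(L)] + [\mathrm{F}(\mathrm{Rad}(M)/\mathrm{Soc}(M))],$$
while $\mathrm{F}(M) \cong M \oplus M$ gives $[\mathrm{F}(M)] = 2[M] = 2[\mathrm{F}(L)]$; so $[\mathrm{F}(\mathrm{Rad}(M)/\mathrm{Soc}(M))] = 0$, hence $\mathrm{F}(\mathrm{Rad}(M)/\mathrm{Soc}(M)) = 0$.
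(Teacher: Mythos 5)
Your overall shape is reasonable and two pieces are solid: the count $\dim\mathrm{End}(\mathrm{F}(L))=2\dim\mathrm{Hom}(L,\mathrm{F}(L))$ obtained from self-adjointness and $\mathrm{F}^2\cong\mathrm{F}\oplus\mathrm{F}$, and the closing Grothendieck-group computation for $\mathrm{F}(\mathrm{Rad}(M)/\mathrm{Soc}(M))=0$. But there are two genuine gaps, and both are exactly the points that the paper closes by invoking \cite[Theorem~25]{ChMa}: for an \emph{arbitrary} finitary $2$-representation the matrix $\Lparen\mathrm{F}\Rparen$ is block-triangular with diagonal blocks coming from the simple transitive subquotients, hence, after \cite[Theorem~15]{MM5} identifies those subquotients with the two cell $2$-representations (matrices $(2)$ and $(0)$), of the form $\left(\begin{smallmatrix}2E&*\\0&0\end{smallmatrix}\right)$. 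First, your ``decisive step'' $\mathrm{End}(M)\cong D$ is only sketched, and the proposed lift does not work as stated: a simple transitive subquotient is obtained by restricting to an additive subcategory and then killing an ideal, so the endomorphism ring of a surviving summand there is a \emph{quotient} of (a subalgebra of) $\mathrm{End}_{\overline{\mathbf{M}}(\mathtt{i})}(M)$; knowing that this quotient is $D$ gives a lower bound on $\dim\mathrm{End}(M)$ and nothing more. (Also, $M=\mathrm{F}(L)$ lives in $\overline{\mathbf{M}}(\mathtt{i})$ and need not be projective, so $\mathrm{add}(M)\subset\mathbf{M}(\mathtt{i})$ is not quite right, though $\mathrm{add}(M)$ is indeed $\mathrm{F}$-stable.)

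Second, even granting $\mathrm{End}(\mathrm{F}(L'))\cong D$ for every simple $L'$, your exclusion of $L'\not\cong L$ from the socle of $M$ does not close. That isomorphism gives indecomposability of $\mathrm{F}(L')$ and that $L'$ occurs with multiplicity exactly one in its top; it does \emph{not} force the top of $\mathrm{F}(L')$ to equal $L'$, and the scenario you must exclude is precisely the one in which $L$ also sits in the top of $\mathrm{F}(L')$ --- which is exactly what $\mathrm{Hom}(\mathrm{F}(L'),L)\cong\mathrm{Hom}(L',\mathrm{F}(L))\neq 0$ asserts, so no contradiction is reached. The paper's mechanism is different and is the ingredient you are missing: from the form of $\Lparen\mathrm{F}\Rparen$ above one reads off $[\mathrm{F}(L):L]=2$ together with the fact that every composition factor $L''\not\cong L$ of $\mathrm{F}(L)$ satisfies $\mathrm{F}(L'')=0$; adjunction then gives $\mathrm{Hom}(\mathrm{F}(L),L'')\cong\mathrm{Hom}(L,\mathrm{F}(L''))=0$ and dually, so only $L$ can occur in the top or socle, and the equality $\dim\mathrm{Hom}(\mathrm{F}(L),L)=\dim\mathrm{Hom}(L,\mathrm{F}(L))$ combined with $[\mathrm{F}(L):L]=2$ forces both to be simple, yielding indecomposability in the same stroke. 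I would restructure the argument around \cite[Theorem~25]{ChMa} rather than attempting to reconstruct its consequences by hand.
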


\begin{proof}
As $\cG$ is fiat with strongly regular two-sided cells, every simple transitive 
$2$-representation of $\cG$ is a cell $2$-representation by \cite[Theorem~15]{MM5}.
Hence there are exactly two different simple transitive 
$2$-representation of $\cG$ and the matrix $\Lparen \mathrm{F}\Rparen$ for these
two $2$-representation is $(0)$ or $(2)$. 

From \cite[Theorem~25]{ChMa} it follows that, for any $\mathbf{M}$, the matrix 
$\Lparen \mathrm{F}\Rparen$ has the form
\begin{displaymath}
\left(\begin{array}{cc}2E&*\\0&0\end{array}\right), 
\end{displaymath}
where $E$ is the identity matrix, up to permutation of basis elements. By adjunction, this implies 
$\mathrm{F}(L)=0$ or $[\mathrm{F}(L):L]=2$. In the latter case exactness of $\mathrm{F}$
yields that $\mathrm{F}$ annihilates all simple subquotients of $\mathrm{F}(L)$ which 
are not isomorphic to $L$. Therefore, by adjunction, only $L$ can appear in the
top or socle of $\mathrm{F}(L)$. Now, from indecomposability we get that both socle
and top must be isomorphic to $L$. The claim follows.
\end{proof}

\section{Simple transitive $2$-representations of $\cQ_5$}\label{s4}

\subsection{The result}\label{s4.1}

Our main result for the $2$-category $\cQ_5$ is the following:

\begin{theorem}\label{thm11}
Every simple transitive $2$-representation of $\cQ_5$ is equivalent to a
cell $2$-representation.
\end{theorem}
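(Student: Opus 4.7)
The plan splits naturally into an algebraic reduction at the Grothendieck-group level followed by a categorical rigidity step.

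Since $\cQ_5$ is $\mathcal{J}$-simple with only two two-sided cells $\mathcal{J}_e$ and $\mathcal{J}$, the apex of any simple transitive $2$-representation $\mathbf{M}$ of $\cQ_5$ is one of these. When the apex is $\mathcal{J}_e$ the only possibility is $\mathbf{M}\cong \mathbf{C}_{\mathcal{L}_e}$, so from now on I assume the apex is $\mathcal{J}$. Expanding Kazhdan-Lusztig products in $\mathbb{Z}D_{10}$ and reducing modulo $\underline{w_0}$ yields the decompositions
\begin{displaymath}
\theta_s^2\cong \theta_s^{\oplus 2},\quad \theta_s\theta_{sts}\cong \theta_{sts}\theta_s\cong \theta_{sts}^{\oplus 2},\quad \theta_{sts}^2\cong \theta_s^{\oplus 2}\oplus \theta_{sts}^{\oplus 2}
\end{displaymath}
in $\cQ_5$. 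Setting $X=\Lparen\theta_s\Rparen$ and $Y=\Lparen\theta_{sts}\Rparen$, these translate into the matrix identities $X^2=2X$, $XY=YX=2Y$ and $Y^2=2X+2Y$.

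To control the action of $\theta_s$, I restrict $\mathbf{M}$ to the $2$-full $2$-subcategory $\cA_5$. This subcategory fits the setup of Subsection~\ref{s3.4} with $A=\mathtt{C}$ and $B=\mathtt{C}^s$, so Corollary~\ref{cornew01} applies: for each simple $L\in\overline{\mathbf{M}}$ the object $\theta_s L$ is either zero or indecomposable with top and socle both isomorphic to $L$. Partitioning the simples of $\overline{\mathbf{M}}$ into $\Omega_1=\{L\,:\,\theta_s L\neq 0\}$ and $\Omega_2=\{L\,:\,\theta_s L=0\}$ and combining this with \cite[Theorem~25]{ChMa} puts $X$ in the block form $\left(\begin{smallmatrix}2I & * \\ 0 & 0\end{smallmatrix}\right)$. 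The identities $XY=YX=2Y$ then force $Y=\left(\begin{smallmatrix}Y_1 & Y_2 \\ 0 & 0\end{smallmatrix}\right)$, and $Y^2=2X+2Y$ restricts on the $\Omega_1$-block to $Y_1^2=2Y_1+4I$. Hence the eigenvalues of $Y_1$ lie in $\{1+\sqrt{5},\,1-\sqrt{5}\}$; since $\mathrm{tr}(Y_1)\in\mathbb{Z}$ they occur with equal multiplicity, so $|\Omega_1|$ is even. The target configuration, matching $\mathbf{C}_{\mathcal{L}_s}$, is $|\Omega_1|=2$, $\Omega_2=\emptyset$, and $Y_1=\left(\begin{smallmatrix}0 & 2 \\ 2 & 2\end{smallmatrix}\right)$.

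The hard step is to rule out the remaining algebraic solutions (for example $Y_1=\left(\begin{smallmatrix}1 & 1 \\ 5 & 1\end{smallmatrix}\right)$ also satisfies $Y_1^2=2Y_1+4I$) and to force $\Omega_2=\emptyset$. I expect this to require categorical input beyond the Grothendieck group: the self-adjointness of $\theta_{sts}$, which yields $\Lparen\theta_{sts}\Rparen^T=\llbracket\theta_{sts}\rrbracket$ as a non-negative integer matrix encoding composition factors of $\theta_{sts}L$; the identity $\theta_s(\theta_{sts}L)\cong(\theta_{sts}L)^{\oplus 2}$, which places strong Loewy-theoretic restrictions on $\theta_{sts}L$ for simple $L$; and a refined analysis of the decomposition $\theta_{sts}^2\cong \theta_s^{\oplus 2}\oplus \theta_{sts}^{\oplus 2}$ that propagates structure between $\Omega_1$ and $\Omega_2$. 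Once $X$ and $Y$ are pinned down to the cell-$2$-representation matrices, a standard argument in the spirit of \cite{MM5,MM6} lifts the numerical agreement to an actual equivalence with $\mathbf{C}_{\mathcal{L}_s}$, completing the proof.
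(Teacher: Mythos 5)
Your reduction to the Grothendieck group is sound and coincides with the paper's: the relations $X^2=2X$, $XY=YX=2Y$, $Y^2=2X+2Y$, the block form of $X$ coming from Corollary~\ref{cornew01} and \cite[Theorem~25]{ChMa}, and the equation $Y_1^2=2Y_1+4I$ are exactly the starting point of Subsections~\ref{s4.3}--\ref{s4.4}. But the proposal stops precisely where the proof begins. First, you never pin down the rank. The paper shows $|\Omega_1|=2$ by combining your ``equal multiplicities of $1\pm\sqrt5$'' observation with the fact that the \emph{special} module $V_3$ occurs with multiplicity exactly one in any transitive based module (\cite[Section~9]{KM2}); and it shows $\Omega_2=\emptyset$ by noting that $\theta_s L=0$ forces $\theta_{sts}L=0$ (since $\theta_{sts}\theta_s\cong\theta_{sts}^{\oplus2}$), giving a zero row in $M_s+M_{sts}$ and contradicting the positivity that transitivity demands. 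Neither step appears in your text. Second, the elimination of the spurious solutions $Y_1=\left(\begin{smallmatrix}1&1\\5&1\end{smallmatrix}\right)$, $\left(\begin{smallmatrix}0&4\\1&2\end{smallmatrix}\right)$, $\left(\begin{smallmatrix}0&1\\4&2\end{smallmatrix}\right)$ is the technical heart of Section~\ref{s4} (Lemmata~\ref{lem15}--\ref{lem17}); each case needs its own argument about the top and socle of $\theta_{sts}L_i$ and the incompatibility of ``$\theta_s$ doubles $\theta_{sts}L_i$'' with the self-extensions created by $\theta_s$. Naming the tools (``self-adjointness'', ``Loewy-theoretic restrictions'') is not the same as running these arguments, and it is not obvious a priori that they succeed.

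The most serious omission is the step corresponding to Lemma~\ref{lem19}: even after the matrices are pinned to case~\eqref{lem14.3}, you must show that $\theta_s L$ and $\theta_{sts}L$ are \emph{projective} objects. The paper does this by applying $\theta_{sts}$ to minimal projective presentations of $\theta_s L$ and $\theta_{sts}L$, extracting a system of inequalities from the known multiplication rules, and concluding that the ideal generated by the presentation maps is $\cQ_5$-stable, hence zero by simple transitivity. Without this, one cannot compute the Cartan matrix of $\mathbf{M}$, and the final comparison with $\mathbf{C}_{\mathcal{L}_s}$ (factoring $\mathbf{P}_{\mathtt{i}}\to\overline{\mathbf{M}}$ through the cell $2$-representation and matching Cartan matrices) does not get off the ground. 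Your closing appeal to ``a standard argument in the spirit of \cite{MM5,MM6}'' does not apply as stated, because $\cQ_5$ lies outside the strongly regular setting of those papers --- that is the whole point of this theorem. As it stands the proposal is a correct plan for the easy half of the proof and an accurate list of difficulties for the hard half, but not a proof.
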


\subsection{The algebra $[\cQ_5](\mathtt{i},\mathtt{i})$}\label{s4.2}

We identify the ring $R=[\cQ_5](\mathtt{i},\mathtt{i})$ with the corresponding subquotient of 
$\mathbb{Z}D_{2\cdot 5}$.

With this identification, the ring $R$ has basis 
$\{\underline{e},\underline{s},\underline{sts}\}$ and the following multiplication table
of $x\cdot y$:

\begin{equation}\label{eq0}
\begin{array}{c||c|c|c}
x\setminus y& \underline{e} & \underline{s} & \underline{sts} \\
\hline\hline
\underline{e}& \underline{e} & \underline{s} & \underline{sts} \\
\hline
\underline{s}& \underline{s} & 2\underline{s} & 2\underline{sts} \\
\hline
\underline{sts}& \underline{sts} & 2\underline{sts} & 2\underline{s}+2\underline{sts} 
\end{array}
\end{equation}
The $\mathbb{C}$-algebra $A:=\mathbb{C}\otimes_{\mathbb{Z}}R$ is commutative and split semisimple.


The three $1$-dimensional representations of $A$ are given by the following table which describes
the action of the basis elements in any fixed basis:
\begin{equation}\label{eq1}
\begin{array}{c||c|c|c}
& V_1 & V_2 & V_3 \\
\hline\hline
\underline{e}& 1 & 1 & 1 \\
\hline
\underline{s}& 0 & 2 & 2 \\
\hline
\underline{sts}& 0 & 1-\sqrt{5} & 1+\sqrt{5} 
\end{array}
\end{equation}
Cells in $A$ have exactly the same combinatorics as for $\cQ_5$.
Recall, from \cite[Subsection~5.1]{KM2}, that a subquotient of a transitive based $A$-module 
is called {\em special} provided that it contains a unique maximal 
(in the absolute value) eigenvalue for the element $\underline{e}+\underline{s}+\underline{sts}$.
Special simple $A$-modules are in bijection with two-sided cells, see \cite[Subsection~9.4]{KM2}.
For the cell $\mathcal{L}_e$, the special $A$-module is $V_1$; for the cell $\mathcal{L}_s$, 
the special $A$-module is $V_3$. 

\subsection{Reduction to the rank two case}\label{s4.3}

Let $\hat{\cQ}_5$ be the quotient of $\cQ_5$ by the $2$-ideal generated by $1$-morphisms in $\mathcal{J}$.
Then the only surviving indecomposable $1$-morphism in $\hat{\cQ}_5$ is the identity $1$-morphism, up to
isomorphism. Therefore each simple transitive $2$-representation of $\hat{\cQ}_5$ is a cell 
$2$-representation by \cite[Theorem~18]{MM5}.

Let $\mathbf{M}$ be a simple transitive $2$-representation of $\cQ_5$. 
Recall that $\cQ_5$ is $\mathcal{J}$-simple. Therefore, if $\mathbf{M}$ is not faithful,
it factors through $\hat{\cQ}_5$. Consequently, from the previous paragraph we obtain that 
$\mathbf{M}$ is a cell $2$-representation. Therefore, from now on in this section 
we may assume that $\mathbf{M}$ is faithful.

\begin{lemma}\label{lem12}
Each faithful simple transitive $2$-representation  $\mathbf{M}$ of $\cQ_5$ has rank two
and decategorifies to $V_2\oplus V_3$.
\end{lemma}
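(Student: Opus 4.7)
The plan is to combine three ingredients: the commutative split semisimple structure of the decategorified algebra $A$, the strict positivity of the basis matrices coming from transitivity, and the block structure of $\Lparen\theta_s\Rparen$ forced by the fiat structure of $\cQ_5$.

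To set up, I would view the complexified decategorification $V:=\mathbb{C}\otimes_{\mathbb{Z}}[\mathbf{M}]$ as an $A$-module and decompose it into isotypic components as $V\cong V_1^{m_1}\oplus V_2^{m_2}\oplus V_3^{m_3}$. Let $S:=\Lparen\theta_s\Rparen$ and $T:=\Lparen\theta_{sts}\Rparen$ be the matrices of $\underline{s}$ and $\underline{sts}$ in the basis $\{b_1,\ldots,b_n\}$ of indecomposable objects, both with non-negative integer entries and spectra on each $V_i$ read off from \eqref{eq1}. Transitivity of $\mathbf{M}$ says precisely that, for any two distinct basis elements $b_i,b_j$, at least one of the entries $S_{ij},T_{ij}$ is positive, so the matrix $M:=I+S+T$ is \emph{strictly positive}.

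Next I would pin down $m_2=m_3=1$ by two classical arguments. First, $T$ has integer entries but its eigenvalues on $V_2$ and $V_3$ are the Galois conjugates $1\mp\sqrt{5}$, forcing $m_2=m_3$. Second, Perron--Frobenius applied to the strictly positive matrix $M$ shows its largest eigenvalue is simple; the spectrum of $M$ is $\{1^{m_1},(4-\sqrt{5})^{m_2},(4+\sqrt{5})^{m_3}\}$, in which $4+\sqrt{5}$ is the unique maximum, so $m_3=1$. This also rules out $m_2=m_3=0$, which would force $S=T=0$ and contradict faithfulness (as $\mathbf{M}$ would then factor through $\hat{\cQ}_5$).

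The main obstacle is to show $m_1=0$. For this I would invoke \cite[Theorem~25]{ChMa}, the structural result already used in the proof of Corollary~\ref{cornew01}, applied to the self-adjoint $1$-morphism $\theta_s$ in the fiat $2$-category $\cQ_5$, which satisfies $\theta_s^2\cong\theta_s^{\oplus 2}$. This yields, after a suitable reordering of the basis,
\begin{displaymath}
S\;=\;\begin{pmatrix} 2I_2 & * \\ 0 & 0 \end{pmatrix},
\end{displaymath}
with a zero block of size $m_1\times m_1$ in the lower right. Combining this with the relation $\underline{s}\cdot\underline{sts}=2\underline{sts}$, rewritten as $ST=2T$, and computing the product block by block forces the bottom $m_1$ rows of $T$ to vanish as well. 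Hence the bottom $m_1$ rows of $M=I+S+T$ coincide with the corresponding rows of the identity, so if $m_1>0$ then for any $i$ in the lower block and any $j\neq i$ we have $M_{ij}=0$, contradicting the strict positivity of $M$ established above. Therefore $m_1=0$ and $[\mathbf{M}]\cong V_2\oplus V_3$ has rank two, as claimed.
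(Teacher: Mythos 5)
Your proof is correct, and its overall skeleton (decompose $V=\mathbb{C}\otimes_{\mathbb{Z}}[\mathbf{M}(\mathtt{i})]$ into $V_1^{\oplus m_1}\oplus V_2^{\oplus m_2}\oplus V_3^{\oplus m_3}$, pin down the multiplicities using positivity from transitivity together with the rank-one classification of simple transitive $2$-representations of $\cA_5$) matches the paper's. Two of your sub-arguments are, however, genuinely different. For $m_2=m_3=1$, the paper quotes \cite[Section~9]{KM2} to get that the special module $V_3$ occurs with multiplicity one and then uses integrality of the trace of $\underline{sts}$ to force $x=1$; you instead derive $m_2=m_3$ from Galois stability of the integral characteristic polynomial of $\Lparen\theta_{sts}\Rparen$ and $m_3=1$ from Perron--Frobenius applied to the strictly positive matrix $I+\Lparen\theta_s\Rparen+\Lparen\theta_{sts}\Rparen$. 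This is more self-contained (it essentially reproves the relevant special-module fact in this small case) at the cost of redoing work the paper outsources. For $m_1=0$, the paper argues at the level of objects: $N$ is upper-triangular with diagonal entries $0$ or $2$, a zero diagonal entry gives $\theta_s L=0$ for some simple $L$ by \cite[Lemma~6.4]{Zi}, whence $\theta_{sts}L=0$ because $\theta_{sts}\theta_s\cong\theta_{sts}^{\oplus 2}$, producing a zero row and contradicting positivity; the conclusion $y=0$ then comes from the trace of $\underline{s}$. You run the decategorified shadow of the same argument: the block form $\left(\begin{smallmatrix}2I&*\\0&0\end{smallmatrix}\right)$ from \cite[Theorem~25]{ChMa} plus the identity $\Lparen\theta_s\Rparen\Lparen\theta_{sts}\Rparen=2\Lparen\theta_{sts}\Rparen$ kills the bottom rows of $\Lparen\theta_{sts}\Rparen$ and contradicts strict positivity. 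Both are valid; yours is purely matrix-theoretic, while the paper's version only needs upper-triangularity of $N$ rather than the full block form (do note that your appeal to the block form still rests, exactly as in Corollary~\ref{cornew01}, on the fact that the simple transitive $2$-representations of $\cA_5$ have $\Lparen\theta_s\Rparen\in\{(0),(2)\}$, which comes from \cite[Theorem~18]{MM5} -- worth stating explicitly).
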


\begin{proof}
Consider the transitive module $V=\mathbb{C}\otimes_{\mathbb{Z}}[\mathbf{M}(\mathtt{i})]$ over the
positively based algebra $A$.  As $\mathbf{M}$ is faithful, either $V_2$ or $V_3$ must appear as a
subquotient of $V$. Being special, $V_3$ appears with multiplicity one by
\cite[Section~9]{KM2}. Since $V_2$ is not special, it follows that 
\begin{displaymath}
V\cong  V_3\oplus V_2^{\oplus x}\oplus V_1^{\oplus y}.
\end{displaymath}
As $\underline{sts}$ must have integral trace, from \eqref{eq1} it follows that $x=1$.
Further, transitivity of $\mathbf{M}$ implies that the matrix $M$ of the action of 
$\underline{s}+\underline{sts}$ on $V$ (in the basis of indecomposable projectives)
is a positive integral matrix. Let $N$ be the matrix of the action of $\underline{s}$
(in the same basis).

The $2$-category $\cA_5$ fits into the general framework of \cite{MM5}.
Hence any simple transitive $2$-representation of $\cA_5$ is a cell $2$-representation
by \cite[Theorem~18]{MM5}. This means that $\cA_5$ has two simple transitive $2$-representations and the
matrices $\Lparen \theta_s\Rparen$ in these $2$-representations are $(2)$ and $(0)$, cf.
\cite[Subsection~6.3]{Zi} and, also, Subsection~\ref{s3.4}.

If the matrix $N$ is not upper-triangular after some permutation of the basis, 
then there is a simple transitive $2$-representation of $\cA_5$ which is at least of rank $2$, 
a contradiction. Given that $N$ is upper-triangular, the diagonals have to be $2$ or $0$ by 
the previous paragraph. 
As the $1$-morphism $\theta_s$ is self-adjoint and  $\theta_s^2\cong\theta_s\oplus\theta_s$, 
for each simple object $L$ in $\overline{\mathbf{M}}(\mathtt{i})$, we have either 
$\theta_s L=0$ or $\theta_s L$ has $L$ in the top, cf. Corollary~\ref{cornew01}.
Assume that $N$ has a zero element on the diagonal. From \cite[Lemma~6.4]{Zi} 
(see also Corollary~\ref{cornew01}) it follows that 
$\theta_s L=0$ for some simple object $L\in\overline{\mathbf{M}}(\mathtt{i})$. Then, on the one hand, 
$\theta_{sts}\theta_s L=0$, but, on the other hand, $\theta_{sts}\theta_s\cong \theta_{sts}\oplus \theta_{sts}$.
Therefore $\theta_{sts} L=0$. This implies that the matrix $M$
has a zero row and thus is not positive, a contradiction. Therefore all diagonal elements in
$N$ are equal to $2$. Taking \eqref{eq1} into account, it follows that  $y=0$. 
\end{proof}

\subsection{Combinatorial restrictions on  matrices}\label{s4.4}

Let $\mathbf{M}$ be a faithful simple transitive $2$-representation of $\cQ_5$, 
$P_1$ and $P_2$ be two non-isomorphic indecomposable objects in  ${\mathbf{M}}(\mathtt{i})$ and
$L_1$ and $L_2$ the corresponding simple objects in $\overline{\mathbf{M}}(\mathtt{i})$.
Consider the transitive module $V=\mathbb{C}\otimes_{\mathbb{Z}}[\mathbf{M}(\mathtt{i})]$ over the
positively based algebra $A$. In this section we determine what the combinatorial possibilities are
for the matrices $M_s:=\Lparen \theta_s\Rparen$ and $M_{sts}:=\Lparen \theta_{sts}\Rparen$.

\begin{lemma}\label{lem14}
Up to swapping $P_1$ and $P_2$, the possibilities for the pair $(M_s,M_{sts})$ are:
\begin{enumerate}[$($a$)$]
\item\label{lem14.1} $\left(\left(\begin{array}{cc}2&0\\0&2\end{array}\right),
\left(\begin{array}{cc}1&1\\5&1\end{array}\right)\right)$,
\item\label{lem14.2} $\left(\left(\begin{array}{cc}2&0\\0&2\end{array}\right),
\left(\begin{array}{cc}0&4\\1&2\end{array}\right)\right)$,
\item\label{lem14.3} $\left(\left(\begin{array}{cc}2&0\\0&2\end{array}\right),
\left(\begin{array}{cc}0&2\\2&2\end{array}\right)\right)$,
\item\label{lem14.4} $\left(\left(\begin{array}{cc}2&0\\0&2\end{array}\right),
\left(\begin{array}{cc}0&1\\4&2\end{array}\right)\right)$.
\end{enumerate}
\end{lemma}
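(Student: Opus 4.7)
The plan is to combine three kinds of input: the $1$-morphism relations in $\cQ_5$ read off from the multiplication table~\eqref{eq0}, the decategorification $V\cong V_2\oplus V_3$ from Lemma~\ref{lem12}, and the non-negativity and integrality of the entries of multiplicity matrices. First I would refine the information about $M_s$. Lemma~\ref{lem12} already puts $M_s$ in upper-triangular form with both diagonal entries equal to $2$, so $M_s=\begin{pmatrix}2&x\\0&2\end{pmatrix}$ for some integer $x\geq 0$. Since $\theta_s^2\cong\theta_s\oplus\theta_s$ in $\cQ_5$, we have $M_s^2=2M_s$, and comparing $(1,2)$-entries forces $4x=2x$, hence $x=0$ and $M_s=2I$.

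Next I would extract constraints on $M_{sts}=\begin{pmatrix}a&b\\c&d\end{pmatrix}$. The decategorification $V_2\oplus V_3$ combined with~\eqref{eq1} gives $\mathrm{tr}(M_{sts})=(1-\sqrt{5})+(1+\sqrt{5})=2$ and $\det(M_{sts})=(1-\sqrt{5})(1+\sqrt{5})=-4$, that is $a+d=2$ and $ad-bc=-4$. The relations $\theta_s\theta_{sts}\cong\theta_{sts}^{\oplus 2}\cong\theta_{sts}\theta_s$ are automatic once $M_s=2I$, and $\theta_{sts}^2\cong\theta_s^{\oplus 2}\oplus\theta_{sts}^{\oplus 2}$ reduces, in the presence of the trace and determinant conditions, to Cayley--Hamilton, so no further constraints arise.

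Finally I would enumerate the non-negative integer solutions. Setting $d=2-a$, the determinant equation becomes $bc=-a^2+2a+4$, which for $a=0,1,2$ yields $bc=4,5,4$ respectively. The positive integer factorisations produce $(b,c)\in\{(1,4),(2,2),(4,1)\}$ when $a\in\{0,2\}$ and $(b,c)\in\{(1,5),(5,1)\}$ when $a=1$. Swapping $P_1$ and $P_2$ corresponds to conjugation by the permutation matrix, which sends $\begin{pmatrix}a&b\\c&d\end{pmatrix}$ to $\begin{pmatrix}d&c\\b&a\end{pmatrix}$; this identifies the $a=2$ list with the $a=0$ list and identifies $(b,c)=(5,1)$ with $(1,5)$ at $a=1$, leaving exactly the four pairs~\eqref{lem14.1}--\eqref{lem14.4}. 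The only mildly delicate point is the legitimacy of lifting the ring-theoretic identities of~\eqref{eq0} to honest direct-sum decompositions of $1$-morphism composites in $\cQ_5$; this is fine because each $\theta_w$ is indecomposable and $\cQ_5$ is fiat, so its decategorification faithfully records direct-sum decompositions.
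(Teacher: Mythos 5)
Your proof is correct, and it arrives at the same enumeration as the paper by a slightly different route. Where the paper constrains the entries $a,b,c,d$ of $M_{sts}$ by writing out $M_{sts}^2=2M_{sts}+2M_s$ (from $\underline{sts}\cdot\underline{sts}=2\underline{s}+2\underline{sts}$ in \eqref{eq0}) and invoking transitivity --- positivity of $M_s+M_{sts}$ forces $b,c\neq 0$, so $(a+d)b=2b$ gives $a+d=2$ and then $bc=4+2a-a^2$ --- you obtain $a+d=2$ and $ad-bc=-4$ directly as the trace and determinant of a matrix with eigenvalues $1\pm\sqrt{5}$, using the decategorification $V\cong V_2\oplus V_3$ from Lemma~\ref{lem12}; the quadratic relation then carries no further information by Cayley--Hamilton, and $b,c>0$ follows from $bc=ad+4\geq 4$ without a separate positivity argument. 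The two sets of constraints are identical, and the enumeration and the identification under swapping $P_1$ and $P_2$ proceed in the same way (your treatment of $M_s$ also matches the paper's parenthetical argument that a rank-two matrix with $2$'s on the diagonal satisfying $M_s^2=2M_s$ must be $2E$). What your version buys is a cleaner derivation of $b,c\neq 0$; what it costs is a heavier reliance on the precise decategorification established in Lemma~\ref{lem12}, whereas the paper's computation at this step needs only the multiplication table and transitivity.
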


\begin{proof}
Similarly to \cite[Lemma~6.4]{Zi}, $M_s$ is equal to twice the identity matrix
(it is a rank-two square matrix with $2$'s on the diagonal satisfying $M_s^2=2M_s$). 
So we only need to  determine $M_{sts}$. Write
\begin{displaymath}
M_{sts}=\left(\begin{array}{cc}a&b\\c&d\end{array}\right).
\end{displaymath}
Then $\underline{sts}\cdot\underline{sts}=2\underline{sts}+2\underline{s}$, given by \eqref{eq1}, 
is equivalent to the following system of equations:
\begin{equation}\label{eq2}
\left\{\begin{array}{lcl}
a^2+bc&=& 2a+4\\
(a+d)b&=& 2b\\
(a+d)c&=& 2c\\
cb+d^2&=& 2d+4
\end{array}\right.
\end{equation}
As $M_s+M_{sts}$ is positive, we have $c\neq 0$ and $b\neq 0$. This implies $a+d=2$. Hence, up to swapping of 
$P_1$ and $P_2$, we have $(a,d)=(1,1)$ or $(a,d)=(0,2)$.

If $(a,d)=(1,1)$, then \eqref{eq2} is equivalent to $bc=5$. Up to swapping of 
$P_1$ and $P_2$, this gives \eqref{lem14.1}. If $(a,d)=(0,2)$, then \eqref{eq2} is equivalent to $bc=4$. 
Note that in this case we cannot swap $P_1$ and $P_2$ anymore, as this will affect the pair $(a,d)$. Therefore
we get all the remaining cases \eqref{lem14.2}, \eqref{lem14.3} and \eqref{lem14.4}.
\end{proof}

\subsection{Ruling out the case of Lemma~\ref{lem14}\eqref{lem14.1}}\label{s4.5}

Here we continue to work in the setup of the previous subsection.

\begin{lemma}\label{lem15}
For a faithful simple transitive $2$-representation of $\cQ_5$, the case of 
Lemma~\ref{lem14}\eqref{lem14.1} is not possible.
\end{lemma}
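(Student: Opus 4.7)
The plan is to derive a contradiction by pinning down $\theta_{sts}L_1$ exactly and then computing $\theta_{sts}L_2$ two different ways. Throughout, let $L_1,L_2\in\overline{\mathbf M}(\mathtt i)$ denote the simple objects. The backbone is the multiplication table~\eqref{eq0}, which translates into the Krull--Schmidt isomorphisms
\begin{displaymath}
\theta_s^2\cong 2\theta_s,\qquad \theta_s\theta_{sts}\cong 2\theta_{sts},\qquad \theta_{sts}^2\cong 2\theta_{sts}\oplus 2\theta_s
\end{displaymath}
in $\cQ_5$. I will also need two preparatory inputs: the composition multiplicities $[\theta_{sts}L_j:L_i]$ read off from $\llbracket\theta_{sts}\rrbracket=\Lparen\theta_{sts}\Rparen^T=\bigl(\begin{smallmatrix}1&5\\ 1&1\end{smallmatrix}\bigr)$; and, via Corollary~\ref{cornew01} applied to the restriction of $\mathbf M$ to $\cA_5$, the fact that each $\theta_sL_j$ is an indecomposable module of length two with both top and socle isomorphic to $L_j$ (hence not isomorphic to any simple object).

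The first substantive step is to identify $\theta_{sts}L_1$. Because $\theta_{sts}$ is self-adjoint and exact, the integer $a_{ij}:=\dim\mathrm{Hom}(L_i,\theta_{sts}L_j)$ simultaneously records the top-$L_i$-multiplicity and the socle-$L_i$-multiplicity of $\theta_{sts}L_j$, and it is symmetric in $i,j$. Combining this symmetry, the upper bound $a_{ij}\leq[\theta_{sts}L_1:L_i]$, and the observation that a length-two module whose top already has length two must be semisimple, a short case analysis (top $=L_1$ forces composition factors $2L_1$; top $=L_2$ forces $2L_2$; both contradict $L_1+L_2$) leaves only $a_{11}=a_{12}=1$, forcing
\begin{displaymath}
\theta_{sts}L_1\cong L_1\oplus L_2.
\end{displaymath}
This rigid identification, which requires juggling three separate constraints simultaneously, is the main obstacle; once it is in place, the remainder is mechanical.

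Applying $\theta_{sts}$ to the preceding isomorphism and comparing with $\theta_{sts}^2L_1\cong 2\theta_{sts}L_1\oplus 2\theta_sL_1$, Krull--Schmidt cancellation yields
\begin{displaymath}
\theta_{sts}L_2\cong L_1\oplus L_2\oplus(\theta_sL_1)^{\oplus 2}.
\end{displaymath}
Applying $\theta_s$ and using $\theta_s^2\cong 2\theta_s$ now gives
\begin{displaymath}
\theta_s\theta_{sts}L_2\cong \theta_sL_1\oplus\theta_sL_2\oplus 4\theta_sL_1\cong 5\theta_sL_1\oplus\theta_sL_2,
\end{displaymath}
which has no simple direct summand. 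But $\theta_s\theta_{sts}\cong 2\theta_{sts}$ simultaneously forces $\theta_s\theta_{sts}L_2\cong 2\theta_{sts}L_2$, which by the previous display contains $2L_1\oplus 2L_2$ as a direct summand. This Krull--Schmidt contradiction rules out case~\eqref{lem14.1}.
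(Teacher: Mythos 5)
Your argument is correct, and its engine is the same as the paper's: exactness and self-adjointness of $\theta_s$ (via Corollary~\ref{cornew01}) force each $\theta_s\,L_i$ to be a non-split self-extension of $L_i$, and the isomorphism $\theta_s\theta_{sts}\cong\theta_{sts}\oplus\theta_{sts}$ then produces a Krull--Schmidt contradiction. The difference is in the routing. The paper applies $\theta_s$ directly to $\theta_{sts}\,L_1$: whatever the Loewy structure of that length-two module with factors $L_1+L_2$, the result $\theta_{sts}\,L_1$ ``doubled'' cannot contain the non-split self-extensions $\theta_s\,L_1$ and $\theta_s\,L_2$ that exactness of $\theta_s$ forces it to contain; this avoids determining $\theta_{sts}\,L_1$ precisely. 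You instead first pin down $\theta_{sts}\,L_1\cong L_1\oplus L_2$ and then take a detour through $\theta_{sts}\,L_2$. That detour is sound but unnecessary: once $\theta_{sts}\,L_1\cong L_1\oplus L_2$ is known, comparing $\theta_s(L_1\oplus L_2)\cong\theta_s\,L_1\oplus\theta_s\,L_2$ (two non-simple indecomposables) with $2\,\theta_{sts}\,L_1\cong L_1^{\oplus2}\oplus L_2^{\oplus2}$ (semisimple) already gives the contradiction. One small imprecision: self-adjointness gives $\dim\mathrm{Hom}(L_i,\theta_{sts}L_j)=\dim\mathrm{Hom}(\theta_{sts}L_i,L_j)$ and $\dim\mathrm{Hom}(\theta_{sts}L_j,L_i)=\dim\mathrm{Hom}(L_j,\theta_{sts}L_i)$, so the assertion that $a_{ij}$ equals the top-multiplicity as well as the socle-multiplicity (equivalently, that $a_{ij}=a_{ji}$) is not automatic for $i\neq j$; only the diagonal instances $\dim\mathrm{Hom}(L_i,\theta_{sts}L_i)=\dim\mathrm{Hom}(\theta_{sts}L_i,L_i)$ follow directly, but these are in fact all your case analysis uses, so the conclusion $\theta_{sts}\,L_1\cong L_1\oplus L_2$ stands.
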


\begin{proof}
Consider the additive closure of $\theta_s\, L_1$ and $\theta_{sts}\, L_1$. It is stable under 
the action of $\cQ_5$. The usual argument for $\theta_s$ 
(cf. Corollary~\ref{cornew01}) implies that, for $i=1,2$, the module $\theta_s\, L_i$
has length two with simple top and socle isomorphic to $L_i$
(see, for example, proof of \cite[Proposition~22]{MM5} or Subsection~\ref{s3.4}). 
Because of the form of the matrix
$\llbracket \theta_{sts}\rrbracket$ which is transposed to $M_{sts}$, the module $\theta_{sts}\, L_1$ is 
either $L_1\oplus L_2$ or is uniserial with simple top $L_2$ and socle $L_1$, or vice versa.
Applying $\theta_s$, we, on the one hand, should get $\theta_{sts}\, L_1$ doubled, as 
$\theta_s\theta_{sts}=\theta_{sts}\oplus \theta_{sts}$. On the other hand, the resulting module
must contain a self-extension of both $L_1$ and $L_2$, a contradiction.
\end{proof}

\subsection{Ruling out the case of Lemma~\ref{lem14}\eqref{lem14.4}}\label{s4.6}

Here we continue to work in the setup of Subsection~\ref{s4.4}.

\begin{lemma}\label{lem16}
For a faithful simple transitive $2$-representation of $\cQ_5$, the case of 
Lemma~\ref{lem14}\eqref{lem14.4} is not possible.
\end{lemma}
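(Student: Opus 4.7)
The plan is to derive a contradiction by tracking the action of $\theta_s$ on a simple module produced by $\theta_{sts}$, in the spirit of Lemma~\ref{lem15} but even shorter. First I would pass from the multiplicity matrix $M_{sts}=\Lparen\theta_{sts}\Rparen$ to the composition-multiplicity matrix $\llbracket\theta_{sts}\rrbracket$. Since $sts$ is an involution in $D_{2\cdot 5}$, the $1$-morphism $\theta_{sts}$ is self-adjoint, so by the adjunction principle of Subsection~\ref{s0-1.6} we have $\llbracket\theta_{sts}\rrbracket=M_{sts}^{T}$. Reading off the first column in case \eqref{lem14.4} shows that $\theta_{sts}\,L_1$ has $L_1$ with composition multiplicity $0$ and $L_2$ with composition multiplicity $1$, hence $\theta_{sts}\,L_1\cong L_2$ is simple.

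Next I would apply $\theta_s$ to this isomorphism. The identity $\underline{s}\cdot\underline{sts}=2\,\underline{sts}$ in \eqref{eq0} categorifies to $\theta_s\theta_{sts}\cong\theta_{sts}\oplus\theta_{sts}$, so
\[
\theta_s\,L_2\;\cong\;\theta_s\theta_{sts}\,L_1\;\cong\;\theta_{sts}\,L_1\oplus\theta_{sts}\,L_1\;\cong\;L_2\oplus L_2.
\]

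Finally I would invoke Corollary~\ref{cornew01} applied to the $2$-subcategory $\cA_5\subset\cQ_5$: since $\theta_s$ is self-adjoint and satisfies $\theta_s^2\cong\theta_s\oplus\theta_s$, the module $\theta_s\,L_2$ must be either zero or indecomposable. But $L_2\oplus L_2$ is non-zero and decomposable, which is the desired contradiction.

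The main potential obstacle is purely formal: ensuring that Corollary~\ref{cornew01} really applies to $\theta_s$ acting on $\overline{\mathbf{M}}(\mathtt{i})$ for a faithful simple transitive $2$-representation $\mathbf{M}$ of $\cQ_5$. However, this exact application has already been made in the proof of Lemma~\ref{lem12}, via the identification of $\cA_5$ with a $2$-category of the shape discussed in Subsection~\ref{s3.4}, so no new work is required. The rest of the argument is essentially a one-line categorification computation.
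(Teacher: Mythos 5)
Your argument is correct and coincides with the paper's own proof: both compute from $\llbracket\theta_{sts}\rrbracket=M_{sts}^{T}$ that $\theta_{sts}\,L_1\cong L_2$, then apply $\theta_s$ and use $\theta_s\theta_{sts}\cong\theta_{sts}\oplus\theta_{sts}$ to force $\theta_s\,L_2\cong L_2\oplus L_2$, contradicting the fact (via the usual argument of Corollary~\ref{cornew01}) that $\theta_s\,L_2$ is non-zero and indecomposable. The only cosmetic difference is that the paper phrases the final contradiction via ``$\theta_s L_2$ has simple top'' rather than the zero-or-indecomposable dichotomy, which amounts to the same thing.
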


\begin{proof}
Consider the additive closure of $\theta_s\, L_1$ and $\theta_{sts}\, L_1$. It is stable under 
the action of $\cQ_5$. The usual argument for $\theta_s$ (cf. Corollary~\ref{cornew01})
implies that $\theta_s\, L_i$
has length two with simple top and socle isomorphic to $L_i$, for $i=1,2$. Because of the form of the matrix
$\llbracket \theta_{sts}\rrbracket$, the module $\theta_{sts}\, L_1$ is 
$L_2$. Applying $\theta_s$, we, on the one hand, should get $L_2\oplus L_2$, as 
$\theta_s\theta_{sts}=\theta_{sts}\oplus \theta_{sts}$. On the other hand, the resulting module
$\theta_s L_2$ is indecomposable as it has simple top, a contradiction.
\end{proof}

\subsection{Ruling out the case of Lemma~\ref{lem14}\eqref{lem14.2}}\label{s4.7}

Here we continue to work in the setup of Subsection~\ref{s4.4}.

\begin{lemma}\label{lem17}
For a faithful simple transitive $2$-representation of $\cQ_5$, the case of 
Lemma~\ref{lem14}\eqref{lem14.2} is not possible.
\end{lemma}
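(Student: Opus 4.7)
The plan is to compute $\dim\mathrm{End}(K_2)$ for $K_2:=\theta_{sts}L_2$ in two incompatible ways and thereby force a contradiction. Set also $K_1:=\theta_{sts}L_1$ and $N_i:=\theta_sL_i$. From $\llbracket\theta_{sts}\rrbracket=M_{sts}^T$ one reads off $[K_1]=4L_2$ and $[K_2]=L_1+2L_2$, while Corollary~\ref{cornew01} ensures that each $N_i$ is indecomposable of length two with top and socle $L_i$.

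The first (``categorical'') computation exploits the quadratic relation $\theta_{sts}^2\cong 2\theta_s\oplus 2\theta_{sts}$ coming from \eqref{eq0} together with self-adjointness of $\theta_{sts}$: adjunction gives
\begin{displaymath}
\mathrm{End}(K_2)\cong\mathrm{Hom}(L_2,\theta_{sts}K_2)\cong\mathrm{Hom}(L_2,2N_2\oplus 2K_2),
\end{displaymath}
so that $\dim\mathrm{End}(K_2)=2+2\beta$ with $\beta:=[\mathrm{soc}(K_2):L_2]$. A parallel adjunction $\mathrm{Hom}(L_1,K_2)\cong\mathrm{Hom}(K_1,L_2)$, combined with the fact that $K_1\ne 0$ has only $L_2$ as a composition factor (so $\mathrm{top}(K_1)$ is a nonzero power of $L_2$), pins down $[\mathrm{soc}(K_2):L_1]=1$.

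The second (``structural'') computation enumerates the possible isomorphism types of length-three modules with composition factors $L_1+2L_2$ and socle $L_1\oplus L_2^{\beta}$, and evaluates $\dim\mathrm{End}(K_2)$ directly. For $\beta=2$, $K_2$ is necessarily semisimple $L_1\oplus 2L_2$ with $\dim\mathrm{End}(K_2)=5\ne 6$. For $\beta=0$, any nontrivial direct sum decomposition would leave a summand with empty socle, so $K_2$ must be indecomposable; but then $\mathrm{soc}(K_2)\subseteq\mathrm{rad}(K_2)$, while a length count forces $\mathrm{rad}(K_2)$ to have composition factors $2L_2$ and hence to contain no $L_1$, contradicting $\mathrm{soc}(K_2)=L_1$. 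For $\beta=1$ one lists the admissible types --- $L_1\oplus N_2$, $L_2\oplus U$ (with $U$ the unique uniserial module of top $L_2$ and socle $L_1$), or $K_2$ indecomposable (in which case $\mathrm{soc}(K_2)\subseteq\mathrm{rad}(K_2)$ together with the length count forces $\mathrm{top}(K_2)=L_2$ and $\mathrm{rad}(K_2)=\mathrm{soc}(K_2)=L_1\oplus L_2$ semisimple) --- and checks that $\dim\mathrm{End}(K_2)\in\{2,3\}$ in every sub-case, never the required value $4$.

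The main obstacle is the case $\beta=1$, where the enumeration of admissible Loewy structures and the verification of the endomorphism dimension in each must be carried out carefully; the remaining cases follow immediately from length counting together with the inclusion $\mathrm{soc}(M)\subseteq\mathrm{rad}(M)$ for indecomposable non-simple $M$.
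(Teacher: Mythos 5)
Your overall strategy --- computing $\dim\mathrm{End}(K_2)$ for $K_2=\theta_{sts}L_2$ once by adjunction from $\theta_{sts}^2\cong\theta_s^{\oplus 2}\oplus\theta_{sts}^{\oplus 2}$, and once from an enumeration of the possible module structures --- is genuinely different from the paper's argument. The paper instead uses the same adjunctions to show that $L_1$ lies in both the top and the socle of $\theta_{sts}L_2$, hence (having multiplicity one) splits off as a direct summand, and then gets a contradiction by applying $\theta_s$: on one side $\theta_s\theta_{sts}L_2\cong(\theta_{sts}L_2)^{\oplus 2}$ has $L_1\oplus L_1$ as a direct summand, while on the other side it contains the indecomposable length-two module $\theta_sL_1$, both of whose composition factors are $L_1$. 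Your route is longer but self-contained, and its key steps --- the formula $\dim\mathrm{End}(K_2)=2+2\beta$, the identification $[\mathrm{soc}(K_2):L_1]=1$, and the case analyses for $\beta=1$ and $\beta=2$ --- are correct.

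One step fails as written, however: in the case $\beta=0$ you assert that ``a length count forces $\mathrm{rad}(K_2)$ to have composition factors $2L_2$.'' This does not follow from length counting alone: the uniserial module with Loewy layers $L_2,L_2,L_1$ (top to bottom) is an indecomposable length-three module with composition factors $L_1+2L_2$ and socle $L_1$, and its radical has factors $L_2+L_1$; so your stated contradiction does not materialize for this candidate (nor for a possible indecomposable with top $L_2^{\oplus 2}$ and radical $L_1$). The gap is easy to close with tools you already use. Either note that, by self-adjointness, $\dim\mathrm{Hom}(K_2,L_1)=\dim\mathrm{Hom}(L_1,K_2)=1$, so $L_1$ also occurs in the top of $K_2$; since in the indecomposable case $L_1=\mathrm{soc}(K_2)\subseteq\mathrm{rad}(K_2)$, this would force $[K_2:L_1]\geq 2$, a contradiction. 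Or observe that both remaining candidates listed above have one-dimensional endomorphism algebra, which again differs from the required value $2+2\beta=2$. With this repair the proof is complete.
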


\begin{proof}
Consider the additive closure of $\theta_s\, L_2$ and $\theta_{sts}\, L_2$. It is stable under 
the action of $\cQ_5$. As usual, for $i=1,2$, the object $\theta_s\, L_i$
has length two with simple top and socle isomorphic to $L_i$. Because of the form of the matrix
$\llbracket \theta_{sts}\rrbracket$, the module $\theta_{sts}\, L_2$ has, as subquotients,
$L_1$ (with multiplicity one) and $L_2$ (with multiplicity two). Similarly to the proof of
Lemma~\ref{lem15}, the module $\theta_{sts}\, L_2$ cannot be semi-simple. 

As $L_1$ has multiplicity one in $\theta_{sts}\, L_2$ and all other simple subquotients are
isomorphic to $L_2$, either socle or top of $\theta_{sts}\, L_2$ contains $L_2$. By adjunction,
we have
\begin{displaymath}
\mathrm{Hom}(\theta_{sts}\, L_2,L_2)\cong \mathrm{Hom}(L_2,\theta_{sts}\, L_2), 
\end{displaymath}
which implies that $L_2$ must appear in both, top and socle of $\theta_{sts}\, L_2$. Now,
$\theta_{sts}\, L_1$ has only $L_2$ as composition subquotients. Therefore, by adjunction,
\begin{displaymath}
\mathrm{Hom}(\theta_{sts}\, L_2,L_1)\cong \mathrm{Hom}(L_2,\theta_{sts}\, L_1)\neq 0 
\end{displaymath}
and therefore $L_1$ is in the top of $\theta_{sts}\, L_2$. A similar argument gives that 
$L_1$ is in the socle of $\theta_{sts}\, L_2$. As  $L_1$ has multiplicity one in $\theta_{sts}\, L_2$,
it must be a direct summand. Now, applying $\theta_s$, we, on the one hand, should get $L_1\oplus L_1$, as 
$\theta_s\theta_{sts}=\theta_{sts}\oplus \theta_{sts}$. On the other hand,  in the resulting module,
the two subquotients $L_1$ must be glued into an indecomposable direct summand  $\theta_s L_1$, a contradiction.
\end{proof}

\subsection{Both $\theta_s$ and $\theta_{sts}$ map simples to projectives}\label{s4.8}

Here we continue to work in the setup of Subsection~\ref{s4.4}.

\begin{lemma}\label{lem19}
For any simple object $L\in \overline{\mathbf{M}}(\mathtt{i})$, both 
$\theta_s\, L$ and  $\theta_{sts}\, L$ are projective objects.
\end{lemma}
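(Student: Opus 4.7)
The approach is to exploit that only case~\eqref{lem14.3} of Lemma~\ref{lem14} survives, so
\begin{displaymath}
M_s=\begin{pmatrix}2&0\\0&2\end{pmatrix},\qquad
M_{sts}=\begin{pmatrix}0&2\\2&2\end{pmatrix}.
\end{displaymath}
This fixes the composition multiplicities $[\theta_s L_i:L_j]=2\delta_{ij}$, $[\theta_{sts}L_1:L_j]=2\delta_{j2}$, and $[\theta_{sts}L_2:L_j]=2$, so it remains to identify each of $\theta_s L_i$ and $\theta_{sts} L_i$ with an explicit direct sum of $P_1,P_2$.

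The first key step is to pin down the structure of $P_1,P_2$ themselves via restriction to $\cA_5$. Since $\theta_s P_i\cong P_i^{\oplus 2}$, the additive closure $\mathbf{M}_i:=\mathrm{add}(P_i)\subset\mathbf{M}(\mathtt{i})$ is $\cA_5$-stable. I would argue that $\mathbf{M}_i$ is simple transitive as an $\cA_5$-representation: a proper invariant ideal of $\mathbf{M}_i$ would lie in $\mathrm{Rad}(\mathrm{End}(P_i))$, and its $\cQ_5$-closure inside $\mathbf{M}$ would remain a proper non-zero $\cQ_5$-invariant ideal (the categorical radical of $\mathbf{M}(\mathtt{i})$ being preserved by the fiat $\cQ_5$-action), contradicting the simple transitivity of $\mathbf{M}$. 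Then \cite[Theorem~18]{MM5} identifies $\mathbf{M}_i$ with the non-trivial cell $2$-representation of $\cA_5$, forcing $\mathrm{End}(P_i)\cong D=\mathbb{C}[x]/(x^2)$ and hence $[P_i:L_i]=2$. Computing $[\theta_{sts}P_j:L_k]$ in two ways (via $\Lparen\theta_{sts}\Rparen$ and via $\llbracket\theta_{sts}\rrbracket$) forces the Cartan matrix to commute with $M_{sts}$; together with $a_{11}=a_{22}=2$ this gives $a_{12}=a_{21}=0$, so the underlying algebra is the product $D\times D$ and $P_i$ is the unique length-$2$ uniserial module with top and socle $L_i$.

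The projectivity claims then follow by composition-factor matching. By Corollary~\ref{cornew01} applied to the $\cA_5$-restriction, each $\theta_s L_i$ is indecomposable with top and socle $L_i$, and of length $2$, so $\theta_s L_i\cong P_i$. The module $\theta_{sts}L_1$ has composition factors $L_2^{\oplus 2}$, and the semisimple option is eliminated by applying $\theta_s$ to both sides of $\theta_s\theta_{sts}\cong\theta_{sts}^{\oplus 2}$, which would yield $P_2^{\oplus 2}$ on one side but $L_2^{\oplus 4}$ on the other; hence $\theta_{sts}L_1\cong P_2$. For $\theta_{sts}L_2$ (composition $L_1^{\oplus 2}\oplus L_2^{\oplus 2}$), the block-diagonal structure $D\times D$ forces a splitting into an $E_1$-piece (either $L_1^{\oplus 2}$ or $P_1$) plus an $E_2$-piece (either $L_2^{\oplus 2}$ or $P_2$), and the three options involving a semisimple summand are ruled out by the same $\theta_s$-comparison, leaving $\theta_{sts}L_2\cong P_1\oplus P_2$.

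The hardest part is the simple-transitivity argument for $\mathbf{M}_i$: one must justify that the $\cQ_5$-closure of a radical $\cA_5$-invariant ideal inside the local algebra $\mathrm{End}(P_i)$ remains inside the categorical radical of $\mathbf{M}(\mathtt{i})$, which in particular requires controlling how $\theta_{sts}$ sends radical endomorphisms of $P_i$ into the off-diagonal components of $\mathrm{End}(\theta_{sts}P_i)$.
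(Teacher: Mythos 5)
Your route differs genuinely from the paper's. The paper proves Lemma~\ref{lem19} by taking minimal projective presentations $\mathbf{A}$ and $\mathbf{B}$ of $\theta_s\,L$ and $\theta_{sts}\,L$, using self-adjointness of $\theta_s$, $\theta_{sts}$ together with the multiplication table \eqref{eq0} and the matrices of case \eqref{lem14.3} to show that $\theta_s$ and $\theta_{sts}$ permute $\mathbf{A}$ and $\mathbf{B}$ up to multiplicity, so that the ideal generated by the two presentation maps $\alpha,\beta$ is $\cQ_5$-stable; simple transitivity then forces $\alpha=\beta=0$. You instead want to first establish $\mathrm{End}(P_i)\cong D$ and the Cartan matrix $2E$, and then identify $\theta_s\,L_i$ and $\theta_{sts}\,L_i$ with explicit projectives by matching composition factors. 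The second half of your argument (the commutation of the Cartan matrix with $M_{sts}$, and the elimination of semisimple summands via $\theta_s\theta_{sts}\cong\theta_{sts}\oplus\theta_{sts}$) is sound \emph{once} the structure of $P_1,P_2$ is known.

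The gap is exactly where you flag it, and as written it is fatal. The assertion that the categorical radical of $\mathbf{M}(\mathtt{i})$ is preserved by the $\cQ_5$-action is false in general for $1$-morphisms acting as projective functors: already for $D$-$\mathrm{proj}$ with $\mathrm{F}=D\otimes_{\mathbb{C}}D\otimes_D{}_-$, the radical endomorphism $x$ of $D$ is sent to $\mathrm{id}_D\otimes x$, and writing $D\otimes_{\mathbb{C}}D=D(1\otimes 1)\oplus D(1\otimes x)$ as a left module one sees that the component of $\mathrm{id}_D\otimes x$ from the first indecomposable summand to the second is an isomorphism. Hence the $\cQ_5$-ideal generated by a proper $\cA_5$-invariant ideal of $\mathrm{add}(P_i)$ may a priori contain identity morphisms of indecomposable objects, and no contradiction with simple transitivity of $\mathbf{M}$ is obtained; controlling the off-diagonal components of $\theta_{sts}$ applied to radical endomorphisms is not a technicality but the whole content. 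Note also that weak Jordan--H{\"o}lder theory for the restriction to $\cA_5$ only identifies simple transitive \emph{subquotients}, not the subrepresentation $\mathrm{add}(P_i)$ itself, so it cannot substitute for this step. Finally, the step is essentially equivalent to the statement being proved: since $\theta_s\,L_i$ is an indecomposable length-two quotient of $P_i$ with top $L_i$, the claim that $P_i$ has length two (equivalently $\mathrm{End}(P_i)\cong D$) is the same as the claim that $\theta_s\,L_i$ is projective. Without an independent argument, such as the paper's presentation-ideal trick, the approach does not close.
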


\begin{proof}
Combining Lemmata~\ref{lem15}, \ref{lem16} and \ref{lem17}, we know that the pair $(M_s,M_{sts})$
is given by Lemma~\ref{lem14}\eqref{lem14.3}. Let 
\begin{displaymath}
\mathbf{A}:\qquad P_1^{\oplus a_1}\oplus P_2^{\oplus a_2}\overset{\alpha}{\longrightarrow}
P_1^{\oplus b_1}\oplus P_2^{\oplus b_2}
\end{displaymath}

be a minimal projective presentation of $\theta_s\, L$ and 
\begin{displaymath}
\mathbf{B}:\qquad P_1^{\oplus c_1}\oplus P_2^{\oplus c_2}\overset{\beta}{\longrightarrow}
P_1^{\oplus d_1}\oplus P_2^{\oplus d_2}
\end{displaymath}
be a minimal projective presentation of $\theta_{sts}\, L$. As $\theta_s$ is self-adjoint, it maps
projective resolutions to projective resolutions. From \eqref{eq0} and the explicit matrix for
$M_{s}$ we obtain that $\theta_s$ doubles both  $\theta_s$, $\theta_{sts}$ and all projective modules.
Hence $\theta_s$ sends $\mathbf{A}$ to $\mathbf{A}\oplus \mathbf{A}$
and $\mathbf{B}$ to $\mathbf{B}\oplus \mathbf{B}$.

As $\theta_{sts}$ is self-adjoint, it maps projective resolutions to projective resolutions
(however, we do not know whether it sends minimal resolutions to minimal resolutions). 
Applying $\theta_{sts}$ to $\mathbf{A}$ and $\mathbf{B}$ and using \eqref{eq0} and the explicit form of
$M_{sts}$ produces the following two systems of inequalities:
\begin{displaymath}
\left\{\begin{array}{lcl}
2b_2&\geq & 2d_1\\
2b_1+2b_2&\geq & 2d_2\\
2d_2&\geq & 2d_1+2b_1\\
2d_1+2d_2&\geq & 2d_2+2b_2
\end{array}\right.
\qquad
\left\{\begin{array}{lcl}
2a_2&\geq & 2c_1\\
2a_1+2a_2&\geq & 2c_2\\
2c_2&\geq & 2c_1+2a_1\\
2c_1+2c_2&\geq & 2c_2+2a_2
\end{array}\right.
\end{displaymath}
These inequalities imply the equalities $d_1=b_2$, $d_2=b_1+b_2$, $c_1=a_2$ and also $c_2=a_1+a_2$. 
Therefore $\theta_{sts}$ maps $\mathbf{A}$ to $\mathbf{B}\oplus \mathbf{B}$ and
$\mathbf{B}$ to $\mathbf{A}\oplus \mathbf{A}\oplus \mathbf{B}\oplus \mathbf{B}$.

The above proves that the ideal generated by $\alpha$ and $\beta$ is stable under the action 
of $\cQ_5$. Since $\mathbf{M}$ is simple transitive, this ideal therefore has to be zero, 
so $\alpha=\beta=0$, cf. \cite[Lemma~12]{MM5}. This completes the proof.
\end{proof}

\subsection{Proof of Theorem~\ref{thm11}}\label{s4.9}

Here we continue to work under the assumptions of Subsection~\ref{s4.4}.
Combining Lemmata~\ref{lem15}, \ref{lem16} and \ref{lem17}, we know that the pair 
$(M_s,M_{sts})$ is given by Lemma~\ref{lem14}\eqref{lem14.3}. Consider the additive 
closure of $\theta_s\, L_1$ and $\theta_{sts}\, L_1$. Both these objects are
projective and the additive closure  is stable under the action of $\cQ_5$.
The module $\theta_s\, L_1$ is an indecomposable module of length two with simple
top and socle isomorphic to $L_1$, by the usual argument (cf. Corollary~\ref{cornew01}). Therefore
$\theta_s\, L_1\cong P_1$. 

The module $\theta_{sts}\, L_1$ has length two and cannot be semisimple by the argument
in the proof of Lemma~\ref{lem15}. Therefore it is an indecomposable module of 
length two with simple top and socle isomorphic to $L_2$. Therefore
$\theta_{sts}\, L_1\cong P_2$. This implies that the Cartan matrix of $\mathbf{M}$ is
\begin{equation}\label{eq4}
\left(\begin{array}{cc}2&0\\0&2\end{array}\right).
\end{equation}
Mapping $\mathbbm{1}_{\mathtt{i}}$ to $L_1$ extends to a strict $2$-natural transformation 
$\Phi:\mathbf{P}_{\mathtt{i}}\to\mathbf{M}$. 

Consider now the cell $2$-representation $\mathbf{C}_{\mathcal{L}_s}$. It is a
faithful simple transitive $2$-representation of $\cQ_5$. Hence the above 
shows that \eqref{eq4} is also the Cartan matrix of $\mathbf{C}_{\mathcal{L}_s}$.
From the uniqueness of a maximal left ideal in the construction of cell $2$-representations, 
it follows that $\Phi$ factors through $\mathbf{C}_{\mathcal{L}_s}$. Comparing the
Cartan matrices, we see that the induced $2$-natural transformation from 
$\mathbf{C}_{\mathcal{L}_s}$ to $\mathbf{M}$ is an equivalence.

\section{Simple transitive  $2$-representations of $\cQ_4$}\label{s5}

\subsection{The result}\label{s5.1}

Our main result for the $2$-category $\cQ_4$ is the following:

\begin{theorem}\label{thm21}
The $2$-category $\cQ_4$ has three equivalence classes of simple transitive $2$-representations,
namely, the cell $2$-representations $\mathbf{C}_{\mathcal{L}_e}$ and  $\mathbf{C}_{\mathcal{L}_s}$
together with the simple transitive $2$-representation $\mathbf{N}$ constructed in Subsection~\ref{s5.3}.
\end{theorem}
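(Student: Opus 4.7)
The plan is to follow the broad strategy that worked for Theorem~\ref{thm11}, adapted to $n=4$ and tracking down where an additional, non-cell, $2$-representation must appear. First, I would reduce to the faithful case: since $\cQ_4$ is $\mathcal{J}$-simple, any non-faithful simple transitive $2$-representation factors through the quotient of $\cQ_4$ by the $2$-ideal generated by $\mathcal{J}=\{\theta_s,\theta_{sts}\}$, whose only non-trivial indecomposable $1$-morphism up to isomorphism is the identity. By \cite[Theorem~18]{MM5} the only such representation is the cell $2$-representation $\mathbf{C}_{\mathcal{L}_e}$, so from now on I may assume that the given simple transitive $2$-representation $\mathbf{M}$ is faithful.

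Next, following the template of Subsections~\ref{s4.2}--\ref{s4.4}, I would compute the multiplication table of the Kazhdan--Lusztig basis $\{\underline{e},\underline{s},\underline{sts}\}$ of $[\cQ_4](\mathtt{i},\mathtt{i})$, decompose the resulting complex algebra into its simple modules and identify the special ones. Combining integrality of traces, transitivity, positivity of $\Lparen \theta_s+\theta_{sts}\Rparen$ and the classification of simple transitive $2$-representations of $\cA_4$ (which still fits the framework of \cite{MM5} via Subsection~\ref{s3.4}), I would pin down the rank and decategorification of $\mathbf{M}$. I would then enumerate the finitely many combinatorial possibilities for the pair $\bigl(\Lparen \theta_s\Rparen,\Lparen \theta_{sts}\Rparen\bigr)$ dictated by the structure constants of $[\cQ_4](\mathtt{i},\mathtt{i})$, and apply the ``simples-to-projectives'' toolkit (Corollary~\ref{cornew01} together with the arguments of Lemmata~\ref{lem15}--\ref{lem19}: exactness and self-adjointness of $\theta_s$ and $\theta_{sts}$, adjunction, and Cartan-matrix constraints) to rule out each combinatorial pair that cannot be realised. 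In contrast with the $n=5$ case, I expect exactly two admissible pairs to survive: the one realised by $\mathbf{C}_{\mathcal{L}_s}$ and an ``exotic'' one that will correspond to $\mathbf{N}$.

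The main obstacle is to show that each surviving combinatorial pair is realised by a unique simple transitive $2$-representation, up to equivalence. For the $\mathbf{C}_{\mathcal{L}_s}$-pair, the argument of Subsection~\ref{s4.9} carries over essentially verbatim: sending $\mathbbm{1}_{\mathtt{i}}$ to a fixed simple object gives a strong $2$-natural transformation $\mathbf{P}_{\mathtt{i}}\to\mathbf{M}$, which factors through $\mathbf{C}_{\mathcal{L}_s}$ by the universal property of cell $2$-representations, and a Cartan-matrix comparison upgrades this to an equivalence. For the exotic pair, \emph{existence} is the delicate part and is established by the explicit construction of $\mathbf{N}$ in Subsection~\ref{s5.3}: one must produce a small category together with endofunctors satisfying all the defining relations of $\cQ_4$ and then verify that the resulting $2$-representation is simple transitive. \emph{Uniqueness} then follows by the same principal-$2$-representation argument with $\mathbf{N}$ in place of $\mathbf{C}_{\mathcal{L}_s}$. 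Finally, inequivalence of $\mathbf{N}$ with both cell $2$-representations is read off the combinatorial invariants (ranks and matrices $\Lparen \theta_{sts}\Rparen$), so the three listed $2$-representations exhaust all equivalence classes.
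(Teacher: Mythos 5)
Your overall architecture matches the paper's: reduce to the faithful case via $\mathcal{J}$-simplicity, use the decategorified algebra $[\cQ_4](\mathtt{i},\mathtt{i})$ and the special-module theory of \cite{KM2} to pin down rank and decategorification (here rank one or two, unlike the $\cQ_5$ case), enumerate the pairs $(M_s,M_{sts})$, eliminate the bad one, prove that $\theta_s$ and $\theta_{sts}$ send simples to projectives, and handle the rank-two case exactly as in Subsection~\ref{s4.9}. All of that is correct and is what the paper does in Subsections~\ref{s5.4}--\ref{s5.7} and the first paragraph of Subsection~\ref{s5.8}.

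The genuine gap is your uniqueness argument in the rank-one case. You assert that uniqueness ``follows by the same principal-$2$-representation argument with $\mathbf{N}$ in place of $\mathbf{C}_{\mathcal{L}_s}$,'' but that argument rests on the universal property of cell $2$-representations: the kernel of $\Phi\colon\mathbf{P}_{\mathtt{i}}\to\overline{\mathbf{M}}$, restricted to the additive closure of $\mathcal{L}_s$, is contained in the unique maximal ideal, so $\Phi$ factors through $\mathbf{C}_{\mathcal{L}_s}$. The $2$-representation $\mathbf{N}$ is \emph{not} of this form --- it is not a quotient of $\mathbf{P}_{\mathtt{i}}\vert_{\mathcal{L}_s}$ by an ideal, since passing from $\mathbf{C}_{\mathcal{L}_s}$ to a rank-one representation identifies the two non-isomorphic indecomposables $\theta_s\,L$ and $\theta_{sts}\,L$, which no ideal quotient can do. So there is no a priori map from $\mathbf{N}$ to an arbitrary rank-one $\mathbf{M}$, and your argument produces only a strict $2$-natural transformation from (something equivalent to) $\mathbf{C}_{\mathcal{L}_s}$ onto $\mathbf{M}$, which cannot be an equivalence for rank reasons. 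The missing idea is the one in Subsections~\ref{s5.9} and~\ref{s5.8}: one must recognize $\mathbf{N}(\mathtt{i})$ as the skew category $\mathbf{L}(\mathtt{i})[\mathbb{Z}/2\mathbb{Z}]$ and invoke its universal property (Proposition~\ref{propnn58}) to factor the map $\Gamma\circ\Pi\circ\Upsilon\colon\mathbf{L}\to\hat{\mathbf{M}}$ through $\Xi\colon\mathbf{L}\to\mathbf{N}$; verifying the coherence hypothesis of that proposition is exactly why the strict involution $\Theta$ (hence the whole chain $\mathbf{Q}\rightsquigarrow\mathbf{K}\rightsquigarrow\mathbf{K}'\rightsquigarrow\mathbf{L}$, Lemma~\ref{lem-modif} and Proposition~\ref{prop-particularPhi}) is needed not only for the \emph{existence} of $\mathbf{N}$ but also for its \emph{uniqueness}. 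As written, your proposal establishes that there is at most one rank-two faithful class and that a rank-one class exists, but not that the rank-one class is unique.
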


\subsection{The algebra $[\cQ_4](\mathtt{i},\mathtt{i})$}\label{s5.2}

We identify the ring $R=[\cQ_4](\mathtt{i},\mathtt{i})$ with the corresponding subquotient of 
$\mathbb{Z}D_{2\cdot 4}$. With this identification, the ring $R$ has basis 
$\{\underline{e},\underline{s},\underline{sts}\}$ and the following multiplication table
of $x\cdot y$:
\begin{equation}\label{eq10}
\begin{array}{c||c|c|c}
x\setminus y& \underline{e} & \underline{s} & \underline{sts} \\
\hline\hline
\underline{e}& \underline{e} & \underline{s} & \underline{sts} \\
\hline
\underline{s}& \underline{s} & 2\underline{s} & 2\underline{sts} \\
\hline
\underline{sts}& \underline{sts} & 2\underline{sts} & 2\underline{s} 
\end{array}
\end{equation}

The $\mathbb{C}$-algebra $A:=\mathbb{C}\otimes_{\mathbb{Z}}R$ is commutative and split semisimple.
The three $1$-dimensional representations of $A$ are given by the following table which describes
the action of the basis elements in any fixed basis:
\begin{equation}\label{eq11}
\begin{array}{c||c|c|c}
& V_1 & V_2 & V_3 \\
\hline\hline
\underline{e}& 1 & 1 & 1 \\
\hline
\underline{s}& 0 & 2 & 2 \\
\hline
\underline{sts}& 0 & -2 & 2 
\end{array}
\end{equation}
Cells in $A$ have exactly the same combinatorics as for $\cQ_4$. 
For the cell $\mathcal{L}_e$,
the special $A$-module, in the sense of \cite{KM2}, 
is $V_1$; for the cell $\mathcal{L}_s$, the special $A$-module is $V_3$.

\subsection{Reduction to ranks one and  two}\label{s5.4}

Let $\hat{\cQ}_4$ be the quotient of $\cQ_4$ by the $2$-ideal generated by $1$-morphisms in $\mathcal{J}$.
Then the only surviving indecomposable $1$-morphism in $\hat{\cQ}_5$ is the identity $1$-morphism, up to
isomorphism. Therefore each simple transitive $2$-representation of $\hat{\cQ}_4$ is a cell 
$2$-representation by \cite[Theorem~18]{MM5}.

Let $\mathbf{M}$ be a simple transitive $2$-representation of $\cQ_4$. 
Recall that $\cQ_4$ is $\mathcal{J}$-simple. Therefore, if $\mathbf{M}$ is not faithful,
it factors through $\hat{\cQ}_4$. Consequently, from the previous paragraph we obtain that 
$\mathbf{M}$ is a cell $2$-representation. Therefore, from now on in this section 
we may assume that $\mathbf{M}$ is faithful.

\begin{lemma}\label{lem22}
A faithful simple transitive $2$-representation  $\mathbf{M}$ of $\cQ_4$ either has rank two
and decategorifies to $V_2\oplus V_3$ or it has rank one and decategorifies to $V_3$.
\end{lemma}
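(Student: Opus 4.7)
My plan is to decompose $V := \mathbb{C}\otimes_{\mathbb{Z}}[\mathbf{M}(\mathtt{i})]$ as an $A$-module, say $V \cong V_1^{\oplus y}\oplus V_2^{\oplus x}\oplus V_3^{\oplus z}$, and pin down the triple $(x,y,z)$ by a combination of trace constraints and the known classification of simple transitive $2$-representations of $\cA_4$, closely following the strategy of Lemma~\ref{lem12}.

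First I would show that $z=1$. Since $\mathbf{M}$ is faithful, $V$ cannot be a sum of copies of $V_1$ alone, because otherwise the non-negative integer matrices $\Lparen\theta_s\Rparen$ and $\Lparen\theta_{sts}\Rparen$ would vanish, so $\theta_s$ and $\theta_{sts}$ would act as zero functors and $\mathbf{M}$ would factor through $\hat{\cQ}_4$. Non-negativity of $\mathrm{tr}(\Lparen\theta_{sts}\Rparen)=2z-2x$, read off from \eqref{eq11}, rules out having only $V_2$'s, so $V_3$ must occur. Then, being the special module for the maximal cell $\mathcal{J}_s$, the module $V_3$ appears with multiplicity exactly one by \cite[Section~9]{KM2}, i.e.\ $z=1$.

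Next I would bound $x$ and $y$. Because $\mathrm{tr}(\Lparen\theta_{sts}\Rparen)=2-2x$ must be non-negative, we get $x\leq 1$. To bound $y$, restrict $\mathbf{M}$ to the $2$-subcategory $\cA_4$, which fits the framework of \cite{MM5}; by \cite[Theorem~18]{MM5} every simple transitive $2$-representation of $\cA_4$ is a cell $2$-representation, with $\theta_s$ acting via $(0)$ or $(2)$. Hence, after filtering $\mathbf{M}|_{\cA_4}$ into such subquotients, the matrix $\Lparen\theta_s\Rparen$ becomes upper-triangular (in an appropriate ordering of indecomposable projectives) with diagonal entries in $\{0,2\}$; since its trace equals $2(1+x)$, exactly $y$ of these entries are zero. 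If $y\geq 1$, then, as in \cite[Lemma~6.4]{Zi} or the argument of Corollary~\ref{cornew01}, some simple $L\in\overline{\mathbf{M}}(\mathtt{i})$ satisfies $\theta_s\, L=0$; the isomorphism $\theta_{sts}\theta_s\cong \theta_{sts}\oplus\theta_{sts}$ coming from \eqref{eq10} then forces $\theta_{sts}\, L=0$ as well. By adjunction this produces a zero row in $\Lparen\theta_s\Rparen+\Lparen\theta_{sts}\Rparen$, contradicting transitivity of $\mathbf{M}$. Hence $y=0$ and $V\cong V_3\oplus V_2^{\oplus x}$ with $x\in\{0,1\}$, yielding either rank two with decategorification $V_3\oplus V_2$ or rank one with decategorification $V_3$.

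The one step that does not simply parallel the proof of Lemma~\ref{lem12} is the bound $x\leq 1$: in that lemma the irrationality of $1\pm\sqrt{5}$ as eigenvalues of $\underline{sts}$ on $V_2,V_3$ was decisive, whereas in the $\cQ_4$ setting all eigenvalues are integers. The substitute, and the only genuinely new input I anticipate needing, is the sign constraint coming from non-negativity of $\mathrm{tr}(\Lparen\theta_{sts}\Rparen)$; once this is in hand the remainder of the argument is a routine adaptation of the $\cQ_5$ case.
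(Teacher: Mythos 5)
Your proof is correct and follows essentially the same route as the paper's: decompose $V$ into $V_1$-, $V_2$- and $V_3$-isotypic pieces, get multiplicity one for the special module $V_3$ via \cite[Section~9]{KM2}, deduce $y=0$ by the restriction-to-$\cA_4$ argument copied from Lemma~\ref{lem12}, and obtain $x\leq 1$ from non-negativity of the trace of $\Lparen \theta_{sts}\Rparen$, which is exactly the paper's substitute for the integrality-of-trace argument used in the $\cQ_5$ case. The only cosmetic difference is that you establish the occurrence of $V_3$ from the trace constraint rather than directly from the apex/special-module theory, which changes nothing of substance.
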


\begin{proof}
Consider the transitive module 
$V=\mathbb{C}\otimes_{\mathbb{Z}}[\mathbf{M}(\mathtt{i})]$ over the
positively based algebra $A$, see Subsection~\ref{s2.3}.
As $\mathbf{M}$ is faithful, either $V_2$ or $V_3$ must appear as a
subquotient of $V$. As $V_3$ is special, it has multiplicity one.
Since $V_2$ is not special, it follows that 
\begin{displaymath}
V\cong  V_3\oplus V_2^{\oplus x}\oplus V_1^{\oplus y}.
\end{displaymath}
Similarly to the proof of Lemma~\ref{lem12} one shows that $y=0$.
As $\underline{sts}$ must have non-negative trace, we have $x\leq 1$. The claim follows.
\end{proof}

\subsection{Combinatorial restrictions on  matrices}\label{s5.5}

Let $\mathbf{M}$ be a faithful simple transitive $2$-representation of $\cQ_4$. Let 
$P_1$ or $P_1$ and $P_2$ be non-isomorphic indecomposable objects in  ${\mathbf{M}}(\mathtt{i})$ and
$L_1$ or  $L_1$ and $L_2$ the corresponding simple objects in $\overline{\mathbf{M}}(\mathtt{i})$.
Consider the transitive module $V=\mathbb{C}\otimes_{\mathbb{Z}}[\mathbf{M}(\mathtt{i})]$ over the
positively based algebra $A$. In this section we determine what the combinatorial possibilities for
the matrices $M_s:=\Lparen \theta_s\Rparen$ and $M_{sts}:=\Lparen \theta_s\Rparen$ are.

\begin{lemma}\label{lem23}
Up to swapping $P_1$ and $P_2$, the only possibilities for the pair $(M_s,M_{sts})$ are:
\begin{enumerate}[$($a$)$]
\item\label{lem23.1} $\left((2),(2)\right)$,
\item\label{lem23.2} $\left(\left(\begin{array}{cc}2&0\\0&2\end{array}\right),
\left(\begin{array}{cc}0&2\\2&0\end{array}\right)\right)$,
\item\label{lem23.3} $\left(\left(\begin{array}{cc}2&0\\0&2\end{array}\right),
\left(\begin{array}{cc}0&4\\1&0\end{array}\right)\right)$.
\end{enumerate}
\end{lemma}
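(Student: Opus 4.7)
The plan is to run a computation exactly parallel to Lemma~\ref{lem14}, using the multiplication table \eqref{eq10}, which differs from \eqref{eq0} only in that $\underline{sts}\cdot\underline{sts} = 2\underline{s}$ rather than $2\underline{s}+2\underline{sts}$. Lemma~\ref{lem22} already splits the argument into a rank-one and a rank-two case. The rank-one case is immediate: decategorification to $V_3$ forces $M_s=(2)$ and $M_{sts}=(2)$, which is \eqref{lem23.1}.

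For the rank-two case I would first pin down $M_s$. From $\underline{s}\cdot\underline{s}=2\underline{s}$ we get $M_s(M_s-2I)=0$, so $M_s$ is diagonalisable with eigenvalues in $\{0,2\}$; and the decategorification $V_2\oplus V_3$ forces the trace to equal $4$, hence $M_s=2I$. Next, write $M_{sts}=\left(\begin{smallmatrix}a&b\\c&d\end{smallmatrix}\right)$. The relation $\underline{sts}\cdot\underline{sts}=2\underline{s}$, combined with $M_s=2I$, translates to $M_{sts}^2=4I$, which unpacks to $a^2+bc=d^2+bc=4$ together with $(a+d)b=(a+d)c=0$. The decategorification trace of $\underline{sts}$ on $V_2\oplus V_3$ is $-2+2=0$; since the diagonal entries of $M_{sts}$ are non-negative integers summing to zero, this forces $a=d=0$, leaving $bc=4$.

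Transitivity then guarantees that $M_{sts}$ has nonzero off-diagonal entries, since $M_e$ and $M_s=2I$ contribute nothing off the diagonal, so $b,c\geq 1$. The positive factorisations of $bc=4$ are $(1,4)$, $(2,2)$ and $(4,1)$, and the swap $P_1\leftrightarrow P_2$ exchanges $b$ and $c$, leaving $(2,2)$ and $(4,1)$ as the representatives up to this swap, which are precisely cases \eqref{lem23.2} and \eqref{lem23.3}.

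This argument is essentially routine linear algebra and poses no real obstacle; the only feature worth noting is that, compared with Lemma~\ref{lem14} for $\cQ_5$, the tighter relation $\underline{sts}^2=2\underline{s}$ immediately kills the diagonal of $M_{sts}$ via the trace condition, so the enumeration is much shorter than the four-case split encountered in the $\cQ_5$ setting.
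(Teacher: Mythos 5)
Your argument is correct and follows essentially the same route as the paper: rank one is immediate from the decategorification, and in rank two one solves $M_{sts}^2=4I$ with non-negative integer entries to get $a=d=0$ and $bc=4$. The only (immaterial) difference is that you kill the diagonal via the trace of $\underline{sts}$ on $V_2\oplus V_3$, whereas the paper first uses positivity of $M_s+M_{sts}$ to get $b,c\neq 0$ and then deduces $a+d=0$ from $(a+d)b=0$.
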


\begin{proof}
If $\mathbf{M}$ has rank one, the possibility \eqref{lem23.1} follows combining Lemma~\ref{lem22}
and \eqref{eq11}. 

If $\mathbf{M}$ has rank two, then, as in Lemma~\ref{lem14}, $M_s$ is equal to twice the identity 
matrix, so we only need to  determine $M_{sts}$. Write
\begin{displaymath}
M_{sts}=\left(\begin{array}{cc}a&b\\c&d\end{array}\right).
\end{displaymath}
Then $\underline{sts}\cdot\underline{sts}=2\underline{s}$, given by \eqref{eq10}, 
is equivalent to the following system of equations:
\begin{displaymath}
\left\{\begin{array}{lcl}
a^2+bc&=& 4\\
(a+d)b&=& 0\\
(a+d)c&=& 0\\
cb+d^2&=& 4
\end{array}\right.
\end{displaymath}
As $M_s+M_{sts}$ is positive, we have $c\neq 0$ and $b\neq 0$. This implies $a+d=0$ and thus $a=0=d$ 
and $bc=4$. Up to swapping of  $P_1$ and $P_2$, this gives possibilities 
\eqref{lem23.2} and \eqref{lem23.3}.
\end{proof}

\subsection{Ruling out the case of Lemma~\ref{lem23}\eqref{lem23.3}}\label{s5.6}

Here we continue to work in the setup of the previous subsection.

\begin{lemma}\label{lem24}
For a faithful simple transitive $2$-representation of $\cQ_4$, the case of 
Lemma~\ref{lem23}\eqref{lem23.3} is not possible.
\end{lemma}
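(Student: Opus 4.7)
The plan is to derive a contradiction from the shape of $M_{sts}$ in case \eqref{lem23.3} by pinning down $\theta_{sts}\,L_2$ as a simple object and then propagating that identification through a rigid isomorphism of composed $1$-morphisms. First, I would read off the composition-multiplicity matrix $\llbracket\theta_{sts}\rrbracket$ as the transpose of $M_{sts}$; its second column shows that $\theta_{sts}\,L_2$ has exactly one composition factor, namely $L_1$, and hence $\theta_{sts}\,L_2\cong L_1$.

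Second, I would invoke the usual argument for the self-adjoint $\theta_s$ (cf.\ Corollary~\ref{cornew01}), which is applicable because $\theta_s^{\star}\cong\theta_s$ and $\theta_s^2\cong\theta_s\oplus\theta_s$ both hold in $\cQ_4$. This yields that $\theta_s\,L_1$ is indecomposable of length two with simple top and simple socle, each isomorphic to $L_1$; in particular, $\theta_s\,L_1$ is \emph{not} isomorphic to $L_1\oplus L_1$.

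Third, I would lift the relation $\underline{s}\cdot\underline{sts}=2\,\underline{sts}$ from \eqref{eq10} to the $1$-morphism isomorphism $\theta_s\theta_{sts}\cong\theta_{sts}\oplus\theta_{sts}$ in the Krull-Schmidt $2$-category $\cQ_4$, and apply it to the simple object $L_2$. Combined with $\theta_{sts}\,L_2\cong L_1$ from step one, this produces
\begin{displaymath}
\theta_s\,L_1\;\cong\;\theta_s\theta_{sts}\,L_2\;\cong\;2\,\theta_{sts}\,L_2\;\cong\;L_1\oplus L_1,
\end{displaymath}
which directly contradicts the indecomposability established in step two and rules out case \eqref{lem23.3}.

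The only delicate point I anticipate is in step two: strictly speaking, Corollary~\ref{cornew01} is stated for representations of $\cG$, not of $\cQ_4$. However, its proof relies only on self-adjointness of $\theta_s$ and the relation $\theta_s^2\cong 2\theta_s$, so it transfers verbatim to the restriction of $\mathbf{M}$ to the $2$-subcategory $\cA_4\subseteq\cQ_4$. Once this is granted, the contradiction is a single application of the $2$-isomorphism coming from the multiplication table.
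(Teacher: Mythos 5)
Your proposal is correct and follows essentially the same route as the paper: identify $\theta_{sts}\,L_2\cong L_1$ from the transposed matrix, note that $\theta_s\,L_1$ is indecomposable of length two by the usual self-adjointness argument (cf.\ Corollary~\ref{cornew01}), and contradict this with $\theta_s\theta_{sts}\,L_2\cong L_1\oplus L_1$. Your added remark on transferring Corollary~\ref{cornew01} to the restriction to $\cA_4$ is a reasonable justification of what the paper calls ``the usual argument.''
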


\begin{proof}
Consider the additive closure of $\theta_s\, L_2$ and $\theta_{sts}\, L_2$. It is stable under 
the action of $\cQ_4$. The usual argument for $\theta_s$ (cf. Corollary~\ref{cornew01}) implies that $\theta_s\, L_i$
has length two with simple top and socle isomorphic to $L_i$, for $i=1,2$. Because of the form of the matrix
$\llbracket \theta_{sts}\rrbracket$ which is transposed to $M_{sts}$, we have
$\theta_{sts}\, L_2\cong L_1$.  Now, on the one hand,  $\theta_s\theta_{sts}\, L_2\cong L_1\oplus L_1$, as 
$\theta_s\theta_{sts}=\theta_{sts}\oplus \theta_{sts}$. On the other hand, the 
usual argument for $\theta_s$ (cf. Corollary~\ref{cornew01}) implies that $\theta_s\, L_1$ 
has length two with simple top and socle isomorphic to $L_1$, a contradiction.
\end{proof}

\subsection{Both $\theta_s$ and $\theta_{sts}$ map simples to projectives}\label{s5.7}

Here we continue to work in the setup of Subsection~\ref{s5.5}.

\begin{lemma}\label{lem25}
For any simple object $L\in \overline{\mathbf{M}}(\mathtt{i})$, both 
$\theta_s\, L$ and  $\theta_{sts}\, L$ are projective objects.
\end{lemma}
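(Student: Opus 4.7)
The plan is to mimic the strategy of Lemma~\ref{lem19} almost verbatim, adapting it to the multiplication table \eqref{eq10} for $\cQ_4$. By Lemmata~\ref{lem22}, \ref{lem23} and \ref{lem24}, we are reduced to cases \eqref{lem23.1} and \eqref{lem23.2} of Lemma~\ref{lem23}. In both cases $M_s$ is twice an identity matrix, so $\theta_s$ doubles every indecomposable projective, and $M_{sts}$ is either $(2)$ (rank one) or the $2\times 2$ permutation matrix with diagonal zeros and off-diagonal $2$'s (rank two). In the rank two case, $\theta_{sts}$ swaps $P_1$ and $P_2$ and doubles.

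Fix a simple object $L\in\overline{\mathbf{M}}(\mathtt{i})$ and let
\begin{displaymath}
\mathbf{A}:\quad P_1^{\oplus a_1}\oplus P_2^{\oplus a_2}\overset{\alpha}{\longrightarrow} P_1^{\oplus b_1}\oplus P_2^{\oplus b_2},\qquad \mathbf{B}:\quad P_1^{\oplus c_1}\oplus P_2^{\oplus c_2}\overset{\beta}{\longrightarrow} P_1^{\oplus d_1}\oplus P_2^{\oplus d_2}
\end{displaymath}
be minimal projective presentations of $\theta_s\,L$ and $\theta_{sts}\,L$, respectively (in the rank one case these degenerate to single-column presentations, and the argument below goes through verbatim). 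As $\theta_s$ is self-adjoint and $\theta_s\theta_s\cong\theta_s\oplus\theta_s$, applying $\theta_s$ sends $\mathbf{A}$ to $\mathbf{A}\oplus\mathbf{A}$ and $\mathbf{B}$ to $\mathbf{B}\oplus\mathbf{B}$. Similarly, $\theta_{sts}$ is self-adjoint and thus sends projective resolutions to projective resolutions, though not necessarily minimally. Using \eqref{eq10}, we have $\theta_{sts}\theta_s\cong\theta_{sts}\oplus\theta_{sts}$ and $\theta_{sts}\theta_{sts}\cong\theta_s\oplus\theta_s$, so applying $\theta_{sts}$ to $\mathbf{A}$ yields a (possibly non-minimal) projective presentation of $\theta_{sts}\,L\oplus\theta_{sts}\,L$, and applying $\theta_{sts}$ to $\mathbf{B}$ yields a (possibly non-minimal) projective presentation of $\theta_s\,L\oplus\theta_s\,L$.

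The main step is to compare term by term: since any projective presentation of a module contains its minimal projective presentation as a direct summand (with extra trivial identity complexes), reading off the multiplicities from $M_{sts}$ produces two systems of inequalities relating $a_1,a_2,b_1,b_2$ and $c_1,c_2,d_1,d_2$. In case \eqref{lem23.2} these are $2a_2\geq 2c_1$, $2a_1\geq 2c_2$, $2b_2\geq 2d_1$, $2b_1\geq 2d_2$ from the first computation, together with the reverse inequalities $2c_2\geq 2a_1$, $2c_1\geq 2a_2$, $2d_2\geq 2b_1$, $2d_1\geq 2b_2$ from the second, forcing all to be equalities; case \eqref{lem23.1} is identical but simpler. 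Consequently, $\theta_{sts}$ actually sends $\mathbf{A}$ to $\mathbf{B}\oplus\mathbf{B}$ and $\mathbf{B}$ to $\mathbf{A}\oplus\mathbf{A}$ without any extra trivial contribution.

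Hence the two-sided $2$-ideal of $\mathbf{M}$ generated by $\alpha$ and $\beta$ is stable under the action of $\cQ_4$. Since $\alpha$ and $\beta$ lie in the radical of the endomorphism rings of the respective projective objects, simple transitivity of $\mathbf{M}$ together with \cite[Lemma~12]{MM5} forces this ideal to vanish, so $\alpha=\beta=0$. Therefore $\theta_s\,L$ and $\theta_{sts}\,L$ are projective, as claimed. The main obstacle is purely book-keeping: verifying that one really can combine the inequalities coming from the two applications of $\theta_{sts}$ into tight equalities, which is where the two-sided relation $\theta_{sts}\theta_{sts}\cong\theta_s\oplus\theta_s$ (specific to $\cQ_4$ and differing from the $\cQ_5$ case) plays an essential role.
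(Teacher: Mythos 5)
Your proof is correct and follows essentially the same route as the paper, which simply observes that in both surviving cases $\theta_s$ and $\theta_{sts}$ send minimal projective presentations to minimal ones and defers to ``similar arguments as in Lemma~\ref{lem19}''; you have just written out explicitly the inequality comparison that the paper leaves implicit, and your inequalities (which here tighten to equalities immediately because $\theta_{sts}\theta_{sts}\cong\theta_s\oplus\theta_s$ swaps $\mathbf{A}$ and $\mathbf{B}$) are the right ones.
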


\begin{proof}
Combining Lemmata~\ref{lem23} and \ref{lem24}, we know that the pair $(M_s,M_{sts})$
is given by Lemma~\ref{lem23}\eqref{lem23.1} or by Lemma~\ref{lem23}\eqref{lem23.2}. 
In the rank one case given by Lemma~\ref{lem23}\eqref{lem23.1} it is clear that both
$\theta_s$ and $\theta_{sts}$ double both $\theta_s\, L$, $\theta_{sts}\, L$ and 
all projectives. Therefore they send minimal projective resolutions to minimal
projective resolutions and the claim follows by similar arguments as in Lemma~\ref{lem19}.

In the rank two case given by Lemma~\ref{lem23}\eqref{lem23.2}, 
$\theta_s$ doubles everything; while $\theta_{sts}$ doubles and swaps indices.
Again, it follows easily that both $\theta_s$ and $\theta_{sts}$ send minimal 
projective resolutions to minimal projective resolutions 
and the claim follows by similar arguments as in Lemma~\ref{lem19}.
\end{proof}

\subsection{Some evidence for the existence of an additional simple transitive 
$2$-rep\-re\-sent\-ation of $\cQ_4$}\label{s5.35}

The ring $[\cQ_6](\mathtt{i},\mathtt{i})$ has basis 
$\{\underline{e},\underline{s},\underline{sts},\underline{ststs}\}$ 
and the following multiplication table of $x\cdot y$:
\begin{displaymath}
\begin{array}{c||c|c|c|c}
x\setminus y& \underline{e} & \underline{s} & \underline{sts} & \underline{ststs}\\
\hline\hline
\underline{e}& \underline{e} & \underline{s} & \underline{sts} & \underline{ststs}\\
\hline
\underline{s}& \underline{s} & 2\underline{s} & 2\underline{sts} & 2\underline{ststs}\\
\hline
\underline{sts}& \underline{sts} & 2\underline{sts} & 2\underline{s}+
2\underline{sts}+2\underline{ststs}& 2\underline{sts}\\ 
\hline
\underline{ststs}& \underline{ststs} & 2\underline{ststs} & 2\underline{sts}& 2\underline{s}\\ 
\end{array}
\end{displaymath}
Comparing this with \eqref{eq10} suggests the possibility of a connection between $\cQ_4$
and the $2$-subcategory $\widetilde{\cQ}_6$ of $\cQ_6$ generated by $\theta_s$ and $\theta_{ststs}$
(however, establishing such a connection explicitly might be very hard).

Now we can consider the restriction of the cell $2$-representation $\mathbf{C}_{\mathcal{L}_s}$
of $\cQ_6$ to $\widetilde{\cQ}_6$. This restriction is not transitive, so we can consider its 
weak Jordan-H{\"o}lder series in the sense of \cite[Section~4]{MM5}. From  the above
multiplication table, we see that  $\underline{ststs}\cdot \underline{sts}=2\underline{sts}$.
This means that there is a simple transitive subquotient in this weak Jordan-H{\"o}lder series,
for which the pair $(\Lparen \theta_s\Rparen,\Lparen \theta_{ststs}\Rparen)$ of matrices 
has the form $((2),(2))$. This suggests that a similar thing should also exist for $\cQ_4$.

\subsection{An additional simple transitive $2$-representation of $\cQ_4$}\label{s5.3}

The aim of this subsection is to construct a simple transitive $2$-representation of $\cQ_4$
which is not equivalent to a cell $2$-representation. This is done in several steps:
\begin{itemize}
\item We start with the cell $2$-representation $\mathbf{Q}$ of $\cQ_4$ which
corresponds to the cell $\mathcal{J}$. The underlying category of this $2$-representation 
has two isomorphism classes of indecomposable objects. We observe that there is 
a $2$-representations $\mathbf{K}$ of $\cQ_4$ which is equivalent to $\mathbf{Q}$
and which has a non-trivial automorphism swapping  the indecomposable objects. 
\item Next, we modify $\mathbf{K}$ to another equivalent $2$-representation 
$\mathbf{L}$ which has the same kind of automorphism that, additionally, 
is a strict involution, i.e. squares to the identity functor.
At the intermediate stage we modify $\mathbf{K}$ to another equivalent 
$2$-representation which has a coherent $\mathbb{Z}/2\mathbb{Z}$-action
(we use the terminology of \cite[Pages~135-136]{Se}).
This involves a subtle chase of certain modifications for 
the cell $2$-representation, which is rather technical and occupies a major part of this subsection.
\item Finally, we use the orbit category construction 
(see \cite{CM} and Subsection~\ref{s5.9} for details) to produce a
``quotient'' of $\mathbf{L}$ which turns out to be a simple transitive $2$-representation of $\cQ_4$
whose underlying category has one isomorphism class of indecomposable objects.
\end{itemize}

So, let us now do the work.
Consider the cell $2$-representation $\mathbf{Q}:=\mathbf{C}_{\mathcal{L}_s}$ of $\cQ_4$
and its abelianization $\overline{\mathbf{Q}}$. Let $P_s$ and $P_{sts}$ be representatives of
the isomorphism classes of the indecomposable projective objects in $\overline{\mathbf{Q}}(\mathtt{i})$
and $L_s$ and $L_{sts}$ be the respective simple tops. Let $B$ denote the basic underlying algebra of 
$\overline{\mathbf{Q}}(\mathtt{i})$ and $f_s$ and $f_{sts}$ denote pairwise orthogonal 
primitive idempotents of $B$ corresponding to $P_s$ and $P_{sts}$, respectively. 
The matrices of the action of $\theta_s$ and $\theta_{sts}$ are given by Lemma~\ref{lem23}\eqref{lem23.2}. As $\mathbf{Q}$ is simple transitive, 
both $\theta_s$ and $\theta_{sts}$ send simple objects in $\overline{\mathbf{Q}}(\mathtt{i})$ to
projective objects in $\overline{\mathbf{Q}}(\mathtt{i})$, by Lemma~\ref{lem25}. As $\cQ_4$ is fiat, from 
\cite[Lemma~13]{MM5} it follows that both $\theta_s$ and $\theta_{sts}$ act as projective
endofunctors of $\overline{\mathbf{Q}}(\mathtt{i})$. 

Recall that $D$ is the algebra of dual numbers introduced in Subsection~\ref{s3.3}.
Taking into account the results of Subsection~\ref{s3.4}, 
from the matrix $M_s$ we can deduce that $B\cong D\oplus D$  and 
$f_s+f_{sts}=1$. The action of $\theta_s$ is given by tensoring with the $B$-$B$--bimodule
$(Bf_{s}\otimes f_{s}B)\oplus(Bf_{sts}\otimes f_{sts}B)$ while
the action of $\theta_{sts}$ is, similarly,  given by tensoring with the $B$-$B$--bimodule
$(Bf_{sts}\otimes f_{s}B)\oplus(Bf_{s}\otimes f_{sts}B)$. 

Our first intermediate goal is to construct a strict $2$-natural automorphism of $\mathbf{Q}$
which swaps the isomorphism classes of indecomposable objects in $\mathbf{Q}(\mathtt{i})$. 
We cannot do it directly and instead construct a different, but equivalent, 
$2$-representation $\mathbf{K}$ where such a strict $2$-natural automorphism
is easy to find.

Set $\mathbf{Q}^{(0)}:=\mathbf{Q}$ and let $\mathbf{Q}^{(1)}$ denote the $2$-representation of $\cQ_4$
given by the action of $\cQ_4$ on the category of projective objects in 
$\overline{\mathbf{Q}}^{(0)}(\mathtt{i})$. Recursively, for $k\geq 1$, define 
$\mathbf{Q}^{(k)}$ as the $2$-representation of $\cQ_4$ given by the action of $\cQ_4$ 
on the category of projective objects in $\overline{\mathbf{Q}}^{(k-1)}(\mathtt{i})$.
For every $k\geq 0$, we have a strict $2$-natural transformation 
$\Lambda_k:{\mathbf{Q}}^{(k-1)}\to \mathbf{Q}^{(k)}$ which sends an object $X$ to the diagram
$0\to X$ and a morphism $\alpha:X\to X'$ to the diagram 
\begin{displaymath}
\xymatrix{
0\ar[rr]\ar[d] && X\ar[d]^{\alpha}\\
0\ar[rr] && X'.
} 
\end{displaymath}
Clearly, each such $\Lambda_k$ is an equivalence. 

Denote by $\mathbf{K}$ the inductive limit of the directed system
\begin{equation}\label{eq5}
\mathbf{Q}^{(0)}\overset{\Lambda_0}{\longrightarrow}
\mathbf{Q}^{(1)}\overset{\Lambda_1}{\longrightarrow}
\mathbf{Q}^{(2)}\overset{\Lambda_2}{\longrightarrow}\dots. 
\end{equation}
Then $\mathbf{K}$ is a $2$-representation of $\cQ_4$ which is equivalent to $\mathbf{Q}$.

\begin{lemma}\label{lem31}
There is a strict $2$-natural transformation $\Psi:\mathbf{K}\to \mathbf{K}$ which is an equivalence 
and which swaps the isomorphism classes of the indecomposable projective objects. 
\end{lemma}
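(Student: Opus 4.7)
The plan is to construct $\Psi$ in two stages: first exhibit a pseudo-natural candidate on $\mathbf{Q}$ using the obvious algebra involution on $B$, and then rigidify it into a strict $2$-natural transformation by exploiting the flexibility of the inductive limit construction defining $\mathbf{K}$.

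For the first stage, let $\sigma\colon B\to B$ be the algebra involution swapping the primitive idempotents $f_s$ and $f_{sts}$, and let $\sigma^{\ast}$ denote the corresponding twist autoequivalence of $\overline{\mathbf{Q}}(\mathtt{i})\simeq B\text{-mod}$. The $B$-$B$-bimodules
\[
(Bf_s\otimes f_sB)\oplus(Bf_{sts}\otimes f_{sts}B) \quad\text{and}\quad (Bf_{sts}\otimes f_sB)\oplus(Bf_s\otimes f_{sts}B)
\]
describing the actions of $\theta_s$ and $\theta_{sts}$ are invariant under the simultaneous application of $\sigma$ on both sides, which furnishes canonical natural isomorphisms $\sigma^{\ast}\circ\theta\cong\theta\circ\sigma^{\ast}$ for $\theta\in\{\theta_s,\theta_{sts}\}$. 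Together with the evident coherence data, these promote $\sigma^{\ast}$ to a pseudo-natural auto-equivalence $\tilde{\Psi}$ of $\mathbf{Q}$ swapping the isomorphism classes of $P_s$ and $P_{sts}$; this $\tilde{\Psi}$ is however not strictly $2$-natural, which is precisely why we cannot work directly on $\mathbf{Q}$.

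For the second stage I would inductively define strict auto-equivalences $\Psi_k\colon\mathbf{Q}^{(k)}\to\mathbf{Q}^{(k)}$ compatible with the $\Lambda_k$: at level $0$ take $\sigma^{\ast}$ with the coherence of $\tilde{\Psi}$, and at level $k+1$ apply $\Psi_k$ term-wise to the objects of $\mathbf{Q}^{(k+1)}(\mathtt{i})$, viewed as two-term complexes of projectives from level $k$. The crucial observation is that by always choosing the projective presentation of $\theta\cdot X$ induced from the chosen presentation of $X$, the coherence $2$-cell between $\Psi_{k+1}\circ\theta$ and $\theta\circ\Psi_{k+1}$ is realized inside a single object of $\mathbf{Q}^{(k+2)}$ after one further application of $\Lambda_{k+1}$, and hence collapses to the identity in the inductive limit \eqref{eq5}. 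This yields the desired strict $\Psi$ on $\mathbf{K}$. That $\Psi$ is an equivalence follows from $\sigma^{2}=\mathrm{id}_{B}$, which gives $\Psi^{2}\cong\mathrm{id}$, and the swap property on isomorphism classes of indecomposable projectives is inherited level-wise from $\tilde{\Psi}$.

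The delicate point, and the main obstacle, is verifying that the coherence data of $\tilde{\Psi}$ really do collapse to \emph{strict} identities in the colimit rather than only to natural isomorphisms. Concretely, one must exhibit at each level $k$ specific equalities $\Psi_{k+1}\circ\theta=\theta\circ\Psi_{k+1}$ on the nose, check their compatibility with morphisms and with the transition $2$-functors $\Lambda_k$, and confirm that the directed-colimit formalism in $\cQ_4$-afmod is itself compatible with strict $2$-naturality in this setting. Once this careful bookkeeping is in place, the remaining statements---that $\Psi$ is an equivalence and swaps the indecomposable projective classes---follow cleanly from the corresponding properties at the pseudo-natural level.
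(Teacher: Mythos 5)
Your strategy differs from the paper's and, as written, has a genuine gap precisely at the point you yourself flag as ``the delicate point'': the strictification. Passing to the inductive limit of the system \eqref{eq5} does not convert natural isomorphisms into identities --- each $\Lambda_k$ is an equivalence onto a subcategory, so the colimit category is just a larger model of $\mathbf{Q}(\mathtt{i})$, and a coherence $2$-cell that is a non-identity isomorphism at level $k$ remains a non-identity isomorphism in the colimit. The claim that the coherence cell ``is realized inside a single object of $\mathbf{Q}^{(k+2)}$ and hence collapses to the identity'' is not an argument; moreover, your term-wise definition of $\Psi_{k+1}$ on two-term complexes requires choosing projective presentations of the objects $\theta\, X$, and it is exactly these choices that produce the non-strictness you are trying to eliminate. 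There is a second, earlier gap: in stage one you only check that the bimodules representing $\theta_s$ and $\theta_{sts}$ are invariant under twisting by $\sigma$, i.e.\ you verify compatibility of $\sigma^{\ast}$ with the $1$-morphisms. A morphism of $2$-representations, even a weak one, must also intertwine the action of all $2$-morphisms of $\cQ_4$ (which are specific bimodule maps, not determined by the bimodules alone), and ``the evident coherence data'' for this is not evident; establishing it is essentially the content of the lemma.

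The paper's proof sidesteps both issues by obtaining strictness for free from the universal property of the principal $2$-representation: evaluation at the simple object $L_{sts}$ gives the unique strict $2$-natural transformation $\Phi:\mathbf{P}_{\mathtt{i}}\to\overline{\mathbf{Q}}$ sending $\mathbb{1}_{\mathtt{i}}$ to $L_{sts}$; this is strict on the nose because the action is strict, and it automatically commutes with all $2$-morphisms. Since $\theta_s\,L_{sts}$ and $\theta_{sts}\,L_{sts}$ are indecomposable projectives, the argument of Subsection~\ref{s4.9} shows that $\Phi$ factors through $\mathbf{C}_{\mathcal{L}_s}$, yielding a strict equivalence $\Phi^{(0)}:\mathbf{Q}^{(0)}\to\mathbf{Q}^{(1)}$ which swaps the two indecomposables by the form of $M_{sts}$ in Lemma~\ref{lem23}\eqref{lem23.2}; abelianizing gives compatible strict equivalences $\Phi^{(k)}:\mathbf{Q}^{(k)}\to\mathbf{Q}^{(k+1)}$, and $\Psi$ is their inductive limit. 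In other words, the colimit is used only to turn a strict map that shifts the level by one into a strict endo-transformation --- not to strictify a pseudo-natural one. To salvage your approach you would need to replace stage two by an honest strictification argument, which is considerably harder than the paper's route.
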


\begin{proof}
Consider the unique strict $2$-natural transformation $\Phi:\mathbf{P}_{\mathtt{i}}\to \overline{\mathbf{Q}}$
which sends $\mathbb{1}_{\mathtt{i}}$ to $L_{sts}$. By the above discussion, both 
$\theta_s\, L_{sts}$ and $\theta_{sts}\, L_{sts}$ are indecomposable objects in 
$\mathbf{Q}^{(1)}(\mathtt{i})$. Using similar arguments as in Subsection~\ref{s4.9},
it follows that $\Phi$ factors through $\mathbf{C}_{\mathcal{L}_s}$ and, therefore,  gives rise to a strict
equivalence $\Phi^{(0)}:\mathbf{Q}^{(0)}\to \mathbf{Q}^{(1)}$. Applying abelianization, for
every $k\geq 0$, we obtain a strict equivalence $\Phi^{(k)}:\mathbf{Q}^{(k)}\to \mathbf{Q}^{(k+1)}$,
which is, by construction, compatible with \eqref{eq5}. Now we can take $\Psi$ as the
inductive limit of $\Phi^{(k)}$.
\end{proof}

Consider a new finitary $2$-representation $\mathbf{K}'$ of $\cQ_4$ defined as follows:
\begin{itemize}
\item Objects of $\mathbf{K}'(\mathtt{i})$ are sequences 
$(X_n,\alpha_n)_{n\in\mathbb{Z}}$, where $X_n$ is an object in 
$\mathbf{K}(\mathtt{i})$ and $\alpha_n\colon \Psi(X_n)\to X_{n+1}$ an isomorphism 
in $\mathbf{K}(\mathtt{i})$, for all $n\in\mathbb{Z}$. 
\item Morphisms in $\mathbf{K}(\mathtt{i})'$ from $(X_n,\alpha_n)_{n\in\mathbb{Z}}$
to $(Y_n,\beta_n)_{n\in\mathbb{Z}}$ are sequences of morphisms $f_n\colon X_n\to Y_n$ 
in $\mathbf{K}(\mathtt{i})$ such that   
\begin{displaymath}
\xymatrix{
\Psi(X_n)\ar[rr]^{\alpha_n}\ar[d]_{\Psi(f_n)}&&X_{n+1}\ar[d]^{f_{n+1}}\\
\Psi(Y_n)\ar[rr]^{\beta_n}&&Y_{n+1}\\
}
\end{displaymath}
commutes for all $n\in\mathbb{Z}$. 
\item The action of $\cQ_4$ on $\mathbf{K}'(\mathtt{i})$ is inherited from the
action of $\cQ_4$ on $\mathbf{K}(\mathtt{i})$ component-wise.
\end{itemize}
The construction of $\mathbf{K}'(\mathtt{i})$ from $\mathbf{K}(\mathtt{i})$ is the
standard construction which turns a category with an autoequivalence (in our case
$\Psi$) into an equivalent category with an automorphism, cf. \cite{Ke,BL}. 

We have the
strict $2$-natural transformation $\Pi:\mathbf{K}'\to \mathbf{K}$ given by
projection onto the zero component of a sequence. This $\Pi$ is an equivalence, by construction.
We also have a strict $2$-natural transformation $\Psi':\mathbf{K}'\to \mathbf{K}'$
given by shifting the entries of the sequences by one, that is, sending 
$(X_n,\alpha_n)_{n\in\mathbb{Z}}$ to $(X_{n+1},\alpha_{n+1})_{n\in\mathbb{Z}}$,
with the similar obvious action on morphisms. Note that 
$\Psi':\mathbf{K}'(\mathtt{i})\to \mathbf{K}'(\mathtt{i})$ is an 
automorphism. Similarly to $\Psi$, the functor $\Psi'$
swaps the isomorphism classes of indecomposable objects in $\mathbf{K}'(\mathtt{i})$.
As $\Psi^2$ is isomorphic to the identity functor on 
$\mathbf{K}(\mathtt{i})$, it follows, by construction, that $(\Psi')^2$ is isomorphic to 
the identity functor on $\mathbf{K}'(\mathtt{i})$. However, we need the following stronger statement.

\begin{lemma}\label{lem-modif}
Let $\mathrm{Id}:\mathbf{K}'\to \mathbf{K}'$ denote the identity 
$2$-natural transformation.
\begin{enumerate}[$($i$)$]
\item\label{lem-modif.1} 
There is an invertible modification $\eta:\mathrm{Id}\to (\Psi')^2$.
\item\label{lem-modif.2}
For any $\eta$ as in \eqref{lem-modif.1}, we have
$\mathrm{id}_{(\Psi')^2}\circ_0 \eta=\eta\circ_0\mathrm{id}_{(\Psi')^2}$.
\item\label{lem-modif.3}
For any $\eta'\in\mathrm{Hom}_{\ccQ_4\text{-}\mathrm{mod}}(\mathrm{Id},(\Psi')^2)$, we have
$\mathrm{id}_{(\Psi')^2}\circ_0 \eta'=\eta'\circ_0\mathrm{id}_{(\Psi')^2}$.
\end{enumerate}
\end{lemma}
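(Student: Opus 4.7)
The plan is to dispose of (i) via an explicit construction inside the strictification $\mathbf{K}'$, to deduce (ii) from a one-line naturality chase, and to reduce (iii) to (ii) using two further naturality squares, with only (i) requiring any real work.

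For (i), since $\Psi^2$ is isomorphic to the identity functor on $\mathbf{K}(\mathtt{i})$ and $\Psi$ is a strict $2$-natural transformation, I would first pick a natural isomorphism $\rho\colon \Psi^2 \to \mathrm{Id}_{\mathbf{K}(\mathtt{i})}$ that is simultaneously an invertible modification (i.e. commutes with the $\cQ_4$-action) and satisfies the coherence $\Psi(\rho_X) = \rho_{\Psi(X)}$; both of these can be arranged using $2$-naturality of $\Psi$ together with the explicit bimodule description of the action in Subsection~\ref{s5.3}. For $X=(X_n,\alpha_n)_{n\in\mathbb{Z}}$ in $\mathbf{K}'(\mathtt{i})$, I then define $\eta_X\colon X \to (\Psi')^2 X = (X_{n+2},\alpha_{n+2})_n$ by the sequence
\begin{displaymath}
f_n := \alpha_{n+1} \circ \Psi(\alpha_n) \circ \rho_{X_n}^{-1} \colon X_n \to X_{n+2}.
\end{displaymath}
The compatibility $\alpha_{n+2} \circ \Psi(f_n) = f_{n+1} \circ \alpha_n$ required of a morphism in $\mathbf{K}'(\mathtt{i})$ follows from naturality of $\rho^{-1}$ at $\alpha_n$ combined with the coherence above; invertibility of $\eta_X$ is immediate from that of its three building blocks; and the modification property of $\eta$ descends from that of $\rho$ and the $2$-naturality of $\Psi$.

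For (ii), let $\eta$ be any invertible modification $\mathrm{Id} \to (\Psi')^2$. Applying the naturality of $\eta$ (viewed as a natural transformation of endofunctors of $\mathbf{K}'(\mathtt{i})$) to the morphism $\eta_X\colon X \to (\Psi')^2 X$ yields
\begin{displaymath}
\eta_{(\Psi')^2 X} \circ \eta_X = (\Psi')^2(\eta_X) \circ \eta_X,
\end{displaymath}
and canceling the invertible $\eta_X$ on the right gives $\eta_{(\Psi')^2 X} = (\Psi')^2(\eta_X)$, which is precisely the equality $\eta \circ_0 \mathrm{id}_{(\Psi')^2} = \mathrm{id}_{(\Psi')^2} \circ_0 \eta$ at every $X$.

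For (iii), fix the invertible $\eta$ from (i) and let $\eta' \in \mathrm{Hom}_{\ccQ_4\text{-}\mathrm{mod}}(\mathrm{Id},(\Psi')^2)$. Apply naturality of $\eta'$ at $\eta_X$ and of $\eta$ at $(\eta')_X$ to obtain, respectively,
\begin{displaymath}
(\eta')_{(\Psi')^2 X} \circ \eta_X = (\Psi')^2(\eta_X) \circ (\eta')_X
\end{displaymath}
and
\begin{displaymath}
\eta_{(\Psi')^2 X} \circ (\eta')_X = (\Psi')^2((\eta')_X) \circ \eta_X.
\end{displaymath}
Substituting $\eta_{(\Psi')^2 X} = (\Psi')^2(\eta_X)$ from (ii) into the second equation shows that both right-hand sides equal $(\Psi')^2(\eta_X) \circ (\eta')_X$; equating left-hand sides and canceling the invertible $\eta_X$ delivers $(\eta')_{(\Psi')^2 X} = (\Psi')^2((\eta')_X)$, as required. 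The main obstacle is step (i): constructing $\rho$ that is at once a modification and satisfies the coherence $\Psi(\rho_X) = \rho_{\Psi(X)}$. Once (i) is in place, parts (ii) and (iii) are purely formal consequences of two naturality squares, with invertibility of $\eta$ used only to cancel a single factor.
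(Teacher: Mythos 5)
Parts (ii) and (iii) of your argument are correct. Your (ii) is the same naturality chase as the paper's, just evaluated at an object $X$ rather than whiskered by $(\Psi')^{-2}$. Your (iii) is genuinely different from the paper's: the paper deduces (iii) from (ii) by observing that invertible modifications span $\mathrm{Hom}_{\ccQ_4\text{-}\mathrm{mod}}(\mathrm{Id},(\Psi')^2)$ (a Zariski-density argument), whereas you run two naturality squares, one for $\eta'$ at $\eta_X$ and one for $\eta$ at $(\eta')_X$, and combine them with (ii); this is more elementary and avoids any appeal to the structure of the modification space.

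The problem is part (i). Your construction of $\eta$ hinges on producing a natural isomorphism $\rho\colon\Psi^2\to\mathrm{Id}$ on $\mathbf{K}(\mathtt{i})$ that is a modification \emph{and} satisfies the coherence $\Psi(\rho_X)=\rho_{\Psi(X)}$, and indeed your verification of the compatibility $\alpha_{n+2}\circ\Psi(f_n)=f_{n+1}\circ\alpha_n$ reduces, after using naturality of $\rho^{-1}$ at $\alpha_n$, exactly to $\Psi(\rho_{X_n}^{-1})=\rho_{\Psi(X_n)}^{-1}$. But this coherence is not something that ``can be arranged using $2$-naturality of $\Psi$'': it is precisely (the $\mathbf{K}$-level analogue of) the statement $\mathrm{id}_{\Psi'}\circ_0\eta=\eta\circ_0\mathrm{id}_{\Psi'}$ of Proposition~\ref{prop-particularPhi}, which is the hardest result of the whole subsection. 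Its proof in the paper requires showing that the relevant operator $T$ squares to the identity on the $2$-dimensional space of modifications --- which uses Lemma~\ref{lem-modif}(iii), the very lemma you are proving --- and then a delicate graded sign computation with explicit bimodules to rule out the eigenvalue $-1$. A generic natural isomorphism $\Psi^2\to\mathrm{Id}$ will \emph{not} satisfy your coherence, and there is no a priori reason an invertible one in the $+1$-eigenspace exists without that analysis. So as written, (i) either presupposes Proposition~\ref{prop-particularPhi} (making the logic circular relative to the paper's development) or leaves the key existence claim unproved. Note that the paper's proof of (i) avoids this entirely: it fixes an arbitrary isomorphism $\eta_{X_1}\colon X_1\to(\Psi')^2(X_1)$, uses $\theta_{sts}\,X_1\cong X_2\oplus X_2$ and strictness of $(\Psi')^2$ to force $\theta_{sts}(\eta_{X_1})$ into the form $\left(\begin{smallmatrix}\alpha&0\\0&\alpha\end{smallmatrix}\right)$, defines $\eta_{X_2}$ from $\alpha$, and then checks the modification axiom only for $\theta_{sts}$, which suffices because $\cQ_4$ is monoidally generated by $\theta_{sts}$. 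You should either adopt such a direct construction on $\mathbf{K}'$ or supply a genuine proof of the coherence you require for $\rho$.
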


\begin{proof}
Let $X_1$ and  $X_2$ be representatives of the different isomorphism classes of
indecomposable objects in $\mathbf{K}'(\mathtt{i})$. Fix an  isomorphism
$\eta_{X_1}:X_1\to (\Psi')^2(X_1)$. Note that, after the abelianization of $\mathbf{K}'$,
the object $X_1$ becomes isomorphic to $\theta_s\, L_1$, where  $L_1$ is the simple top
of the indecomposable projective object $0\to X_1$. As 
$\theta_{sts}\circ\theta_s\cong \theta_{sts}\oplus\theta_{sts}$,
and $(\Psi')^2$ is a strict $2$-natural transformation,
it follows that the morphism $\theta_{sts}(\eta_{X_1})$ can be written in the form
\begin{displaymath}
\left(\begin{array}{cc}\alpha&0\\0&\alpha\end{array}\right), 
\end{displaymath}
for some isomorphism $\alpha:\theta_{sts}\, L_1\to (\Psi')^2(\theta_{sts}\, L_1)$. We may now define
$\eta_{X_2}$ as the map induced from $\alpha$ via a fixed isomorphism $X_2\cong \theta_{sts}\, L_1$. 
This uniquely determines a bijective natural transformation  $\eta:\mathrm{Id}\to (\Psi')^2$.

As $\theta_{sts}\circ\theta_{sts}\cong \theta_s\oplus \theta_s$, from the above construction
it follows that 
\begin{equation}\label{neq8}
\mathrm{id}_{\theta_{sts}}\circ_0\eta=\eta\circ_0 \mathrm{id}_{\theta_{sts}}.
\end{equation}
Note that the $2$-category $\cQ_4$ is monoidally generated by $\theta_{sts}$. Therefore \eqref{neq8} implies
that the $\eta$ constructed above is, in fact, a modification. Claim~\eqref{lem-modif.1} follows.
 
Because of the naturality of $\eta$, we have the commutative diagram
\begin{displaymath}
\xymatrix{ 
(\Psi')^{-2}\ar[rr]^{\eta\circ_0\mathrm{id}_{(\Psi')^{-2}}}
\ar[d]_{\eta\circ_0\mathrm{id}_{(\Psi')^{-2}}}&&
\mathrm{Id}\ar[d]^{\mathrm{id}_{(\Psi')^2}\circ_0\eta\circ_0\mathrm{id}_{(\Psi')^{-2}}}\\
\mathrm{Id}\ar[rr]^{\eta}&&(\Psi')^2
} 
\end{displaymath}
As $\eta\circ_0\mathrm{id}_{(\Psi')^{-2}}$ is invertible, 
claim~\eqref{lem-modif.2} follows.
As invertible elements inside the space
$\mathrm{Hom}_{\ccQ_4\text{-}\mathrm{mod}}(\mathrm{Id},(\Psi')^2)$
exist by claim~\eqref{lem-modif.1}, they also span this space
(by the usual argument that invertible modifications form
a Zariski open set which is non-empty by claim~\eqref{lem-modif.1}). 
Therefore claim~\eqref{lem-modif.3} follows from claim~\eqref{lem-modif.2}
by linearity.
\end{proof}

The next statement basically says that there is a coherent action of the group 
$\mathbb{Z}/2\mathbb{Z}$ on $\mathbf{K}'$.

\begin{proposition}\label{prop-particularPhi}
There is an invertible modification $\eta:\mathrm{Id}\to (\Psi')^2$ 
for which we have the equality $\mathrm{id}_{\Psi'}\circ_0 \eta=\eta\circ_0\mathrm{id}_{\Psi'}$.
\end{proposition}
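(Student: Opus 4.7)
The plan is to take an invertible modification $\eta$ given by Lemma~\ref{lem-modif}(\ref{lem-modif.1}) and to show that one can produce a variant of it with the desired equivariance property. I first observe that $\mathrm{id}_{\Psi'}\circ_0 \eta$ and $\eta\circ_0 \mathrm{id}_{\Psi'}$ are both invertible modifications from $\Psi'$ to $(\Psi')^{3}$. Since $\mathbf{K}'$ is a faithful simple transitive $2$-representation (being equivalent to the cell $2$-representation $\mathbf{Q}$), the space of modifications between any two invertible $2$-natural endotransformations of $\mathbf{K}'$ is one-dimensional; thus there is a unique scalar $c\in\mathbb{C}^{\times}$ with $\eta\circ_0 \mathrm{id}_{\Psi'} = c\cdot(\mathrm{id}_{\Psi'}\circ_0 \eta)$, and the goal becomes producing $\eta$ with $c=1$.

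The next step is to extract the constraint $c^{2}=1$ from Lemma~\ref{lem-modif}(\ref{lem-modif.2}). Writing $\mathrm{id}_{(\Psi')^{2}}=\mathrm{id}_{\Psi'}\circ_0\mathrm{id}_{\Psi'}$, substituting the defining relation for $c$ twice, and using the interchange law for horizontal composition, one obtains
\begin{displaymath}
\mathrm{id}_{(\Psi')^{2}}\circ_0 \eta \;=\; c^{-2}\,(\eta\circ_0 \mathrm{id}_{(\Psi')^{2}}),
\end{displaymath}
which combined with Lemma~\ref{lem-modif}(\ref{lem-modif.2}) forces $c^{-2}=1$, so $c\in\{\pm 1\}$.

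The hard part is to rule out $c=-1$. Merely rescaling $\eta$ by a non-zero scalar will not help, because both sides of the defining relation for $c$ scale by the same factor, so the defect is invariant under the scaling freedom inside the one-dimensional space of modifications. The plan here is to unwind the explicit description of $\eta$ given in the proof of Lemma~\ref{lem-modif}, where, after identifying $(\Psi')^{2}(X_{1})$ with $X_{1}$, the value $\eta_{X_{1}}$ is forced to be a scalar in $\mathbb{C}\subset D$ and $\eta_{X_{2}}$ is the diagonal scalar extracted from $\theta_{sts}(\eta_{X_{1}})$. Pair this with the description of $\Psi'$ as the inductive limit of the equivalences $\Phi^{(k)}$ built from the $2$-natural transformation $\mathbf{P}_{\mathtt{i}}\to\overline{\mathbf{Q}}$ sending $\mathbb{1}_{\mathtt{i}}$ to $L_{sts}$: since $\Psi'$ is a strict $2$-natural transformation, it strictly intertwines the action of $\theta_{sts}$, and since it is $\mathbb{C}$-linear it preserves the scalar nature of $\eta_{X_{1}}$. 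Comparing the two ways of producing an isomorphism $X_{2}\to(\Psi')^{2}(X_{2})$ from the scalar $\eta_{X_{1}}$---via the diagonal-scalar prescription and via $\Psi'$---shows that $\Psi'(\eta_{X_{1}})=\eta_{X_{2}}$, and hence $c=1$, which by the naturality of the construction extends to all objects and yields the desired modification.
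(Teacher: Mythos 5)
Your overall strategy---reduce the discrepancy between $\mathrm{id}_{\Psi'}\circ_0\eta$ and $\eta\circ_0\mathrm{id}_{\Psi'}$ to a sign and then exclude the sign $-1$ by an explicit computation---is the same shape as the paper's argument, but both halves of your proposal contain genuine gaps.

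First, the claim that the space of modifications between two invertible $2$-natural endotransformations of $\mathbf{K}'$ is one-dimensional is false. The underlying algebra of $\mathbf{K}'(\mathtt{i})$ is $B\cong D\oplus D$ with $D=\mathbb{C}[x]/(x^2)$, so $\mathrm{End}(\mathrm{Id})\cong B$ is four-dimensional, and the modification condition (determination by the value on $X_1$, using $\theta_{sts}\,X_1\cong X_2\oplus X_2$) cuts this down to $\mathrm{Hom}_{\ccQ_4\text{-}\mathrm{mod}}(\mathrm{Id},(\Psi')^2)\cong D$, which is \emph{two}-dimensional. Faithfulness and simple transitivity do not help here: $D$ is local but not $\mathbb{C}$. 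Consequently you only get $\eta\circ_0\mathrm{id}_{\Psi'}=z\cdot(\mathrm{id}_{\Psi'}\circ_0\eta)$ for some invertible $z=c+dx\in D$, and your subsequent manipulation gives no control on $d$. This is precisely why the paper introduces the grading: the degree-zero part of the two-dimensional space of modifications is one-dimensional, the conjugation map $T=\mathrm{id}_{\Psi'}\circ_0(-)\circ_0\mathrm{id}_{(\Psi')^{-1}}$ is homogeneous of degree zero, and only therefore is the degree-zero $\eta$ an eigenvector of $T$ with eigenvalue $\pm1$. Without this (or an equivalent) argument your reduction to a scalar $c$ with $c^2=1$ does not go through.

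Second, the exclusion of $c=-1$ is asserted rather than proved. The statement that $\Psi'$, being $\mathbb{C}$-linear and strict, ``preserves the scalar nature of $\eta_{X_1}$'' decides nothing: $-\eta_{X_2}$ is exactly as scalar as $\eta_{X_2}$, and the whole question is whether the comparison of the two ways of producing an isomorphism $X_2\to(\Psi')^2(X_2)$ introduces a factor of $-1$. The paper settles this by writing $\Psi'$ and $\eta$ explicitly in terms of $B$-$B$--bimodules: $\Psi$ comes from the presentation $\theta_{sts}^{\oplus k}\to\theta_{sts}$ of $L_{sts}$, and $\eta$ is built from the projection $Q_{sts}\otimes_BQ_{sts}\twoheadrightarrow Q_s$ composed with the multiplication map $\mu:B\otimes_{\mathbb{C}}B\to B$; one then checks that neither map introduces signs, so the degree-zero comparison, already known to be $\pm1$, must be $+1$. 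Some computation of this kind is unavoidable; the abstract properties of $\Psi'$ you invoke are insensitive to the sign.
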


\begin{proof}
Consider the space $\mathrm{Hom}(\mathrm{Id},(\Psi')^2)$ of all natural transformations
from $\mathrm{Id}$ to $(\Psi')^2$, as endofunctors of $\mathbf{K}'(\mathtt{i})$.
As $\mathrm{Id}$ and $(\Psi')^2$ are isomorphic, the space $\mathrm{Hom}(\mathrm{Id},(\Psi')^2)$
is isomorphic, by fixing any isomorphism between $\mathrm{Id}$ and $(\Psi')^2$, to the space
$\mathrm{End}(\mathrm{Id})$ of natural endomorphisms of the identity functor on 
$\mathbf{K}'(\mathtt{i})$. We have the usual isomorphism $\mathrm{End}(\mathrm{Id})\cong B$,
since $B$ is commutative. In particular, $\mathrm{End}(\mathrm{Id})$ has dimension four.

The vector space $\mathrm{Hom}_{\ccQ_4\text{-}\mathrm{mod}}(\mathrm{Id},(\Psi')^2)$ of
modifications is a subspace in the space $\mathrm{Hom}(\mathrm{Id},(\Psi')^2)$.
An element $\zeta\in \mathrm{Hom}(\mathrm{Id},(\Psi')^2)$ is uniquely determined by its values
on any pair $X_1$ and $X_2$ of representatives of the two isomorphism classes of 
indecomposable objects in  $\mathbf{K}'(\mathtt{i})$. The $2$-category $\cQ_4$ is monoidally 
generated by $\theta_{sts}$. Therefore, as $\theta_{sts}\,X_1\cong X_2\oplus X_2$,
the axiom $\zeta\circ_0 \mathrm{id}_{\theta_{sts}}=\mathrm{id}_{\theta_{sts}}\circ_0\zeta$ for
modifications implies that an element $\zeta\in \mathrm{Hom}_{\ccQ_4\text{-}\mathrm{mod}}(\mathrm{Id},(\Psi')^2)$
is uniquely determined by its value on $X_1$. Consequently, 
$\mathrm{Hom}_{\ccQ_4\text{-}\mathrm{mod}}(\mathrm{Id},(\Psi')^2)$ has dimension two.

The map 
\begin{equation}\label{eqnn37}
\zeta\mapsto \mathrm{id}_{\Psi'}\circ_0 \zeta\circ_0 \mathrm{id}_{{\Psi'}^{-1}}  
\end{equation}
is an invertible linear transformation of the space $\mathrm{Hom}(\mathrm{Id},(\Psi')^2)$ which preserves
$\mathrm{Hom}_{\ccQ_4\text{-}\mathrm{mod}}(\mathrm{Id},(\Psi')^2)$. We denote this transformation by  $T$. From
Lemma~\ref{lem-modif}\eqref{lem-modif.3} we know that $T^2$ is the identity map, when restricted to 
$\mathrm{Hom}_{\ccQ_4\text{-}\mathrm{mod}}(\mathrm{Id},(\Psi')^2)$. 
Consequently, $T$ is diagonalizable (in the restriction) with eigenvalues $1$ and $-1$.

The algebra $D$ is positively graded in the usual way (the degree of $x$ is two), which also
makes $B$ into a positively graded algebra. The bimodules which represent the actions of
$\theta_s$ and $\theta_{sts}$ are gradeable and all our constructions above are gradeable
as well. Consequently, $\mathrm{Hom}(\mathrm{Id},(\Psi')^2)$ inherits a grading form $B$
and $\mathrm{Hom}_{\ccQ_4\text{-}\mathrm{mod}}(\mathrm{Id},(\Psi')^2)$ is a graded subspace of 
$\mathrm{Hom}(\mathrm{Id},(\Psi')^2)$ isomorphic to $D$ (as a graded space). The map
\eqref{eqnn37} is homogeneous of degree zero, which implies that both homogeneous elements 
in $\mathrm{Hom}_{\ccQ_4\text{-}\mathrm{mod}}(\mathrm{Id},(\Psi')^2)$ are eigenvectors for
the linear map \eqref{eqnn37}.

Let $\eta\in \mathrm{Hom}_{\ccQ_4\text{-}\mathrm{mod}}(\mathrm{Id},(\Psi')^2)$ be 
a non-zero homogeneous element of degree zero. Then $\eta$ is, obviously, invertible.
By the above, $\eta$ is an eigenvector for the map \eqref{eqnn37}. To complete
the proof, we only need to check  that the corresponding eigenvalue is $1$ and not $-1$.

To check the eigenvalue, we will use a description of both $\eta$ and $\Psi'$ in terms of 
$B$-$B$--bimodules which is explicit enough to see that the transformation \eqref{eqnn37},
when applied to $\eta$, cannot introduce any signs. For this we
need to recall the construction of $\Psi'$ which is based on the
construction of $\Psi$ (cf. \cite[Subsection~4.6]{MM1}). 
Let $\alpha_1,\alpha_2,\dots,\alpha_k$ be a basis in the linear space of 
degree two  $2$-endomorphisms of $\theta_{sts}$. Then the  module $L_{sts}$ in 
$\overline{\mathbf{Q}}(\mathtt{i})$, which was used to define $\Psi$, has a presentation of the form 
\begin{displaymath}
\theta_{sts}^{\oplus k}\overset{\alpha}{\longrightarrow}\theta_{sts}, 
\end{displaymath}
where $\alpha=(\alpha_1,\alpha_2,\dots,\alpha_k)$. For objects $\theta\in \mathbf{C}_{\mathcal{L}_s}$,
the functor $\Psi$ is given by 
\begin{displaymath}
\theta\quad\mapsto\qquad \theta\theta_{sts}^{\oplus k}\overset{\theta(\alpha)}{\longrightarrow}\theta\theta_{sts},
\end{displaymath}
by construction. The $2$-representation $\mathbf{K}'$ is $2$-faithful 
because of $\mathcal{J}$-simplicity of $\cQ_5$, see Subsection~\ref{s3.2}.
Therefore, using the connection to $B$-mod as explained in the beginning of this subsection, 
the above can be written in terms of explicit $B$-$B$--bimodules and bimodule maps which 
represent the action of $\theta_s$ and $\theta_{sts}$. Let us denote by $Q_s$ and $Q_{sts}$
the $B$-$B$-bimodules which represent the actions of $\theta_s$ and $\theta_{sts}$, respectively.

We can also consider a similar presentation
\begin{displaymath}
\theta_{s}^{\oplus m}\overset{\beta}{\longrightarrow}\theta_{s}, 
\end{displaymath}
for $L_S$, where $\beta=(\beta_1,\beta_2,\dots,\beta_m)$ with $\beta_1,\beta_2,\dots,\beta_m$ being a basis in 
the linear space of  degree two  $2$-endomorphisms of $\theta_{s}$. Then the functor 
\begin{displaymath}
\theta\quad\mapsto\qquad \theta\theta_{s}^{\oplus m}\overset{\theta(\beta)}{\longrightarrow}\theta\theta_{s}
\end{displaymath}
is isomorphic to the identity functor on the cell $2$-representation via the isomorphism induced by the
multiplication map $\mu:B\otimes_{\mathbb{C}}B\to B$ (note that $Q_s$ is a direct summand of 
$B\otimes_{\mathbb{C}}B$). Note that $\mu$ does not introduce any additional signs to its arguments.

We have the usual isomorphism $Q_{sts}\otimes_B Q_{sts}\cong Q_s\oplus Q_s\langle-2\rangle$, where 
$\langle-2\rangle$ denotes the degree shift by $2$ in the positive direction (i.e. with the top concentrated
in degree two). Composition of the projection $Q_{sts}\otimes_B Q_{sts}\tto Q_s$ with the 
multiplication map $\mu$ from the previous paragraph gives rise
to a natural transformation from $\Psi^2$ to the identity functor.
In degree zero, the projection $Q_{sts}\otimes_B Q_{sts}\tto Q_s$ is given in terms of 
contracting to $\mathbb{C}$ some middle tensor factors, surrounded by $\otimes_{\mathbb{C}}$, 
of a tensor monomial. Note that this does not introduce any additional signs. 
By construction, the induced $2$-natural transformation is a homogeneous modification of degree zero, 
so we can choose $\eta$ such that it corresponds to this map.

We can take $\theta_s$ and $\theta_{sts}$ as representatives of isomorphism
classes of indecomposable objects in the cell $2$-representation. The modification $\eta$
is uniquely determined by its values on any of these objects.
We have to check that $\Psi(\eta)=\eta_{\Psi}$, while we already know that 
$\Psi(\eta)=\pm\eta_{\Psi}$. To do this, we can explicitly write down the bimodules representing the
evaluations of $\Psi(\eta)$ and $\eta_{\Psi}$ at some object, say $\theta_s$. This gives rather big complexes
of bimodules, in which we just need to compare the degree zero parts in position zero. 
We already know that the degree zero parts are either equal or differ by a sign. 
However, as explained above, neither the multiplication map nor the projection 
$Q_{sts}\otimes_B Q_{sts}\tto Q_s$ introduce any new signs in our picture. As we work over $\mathbb{C}$,
that is in characteristic zero, we cannot introduce any signs by adding and multiplying positive elements.
This implies that, indeed, $\Psi(\eta)=\eta_{\Psi}$. The statement of the proposition follows.
\end{proof}

From now on we fix some invertible modification $\eta:\mathrm{Id}\to (\Psi')^2$
as given by Proposition~\ref{prop-particularPhi}. Now we use $\eta$ to replace $\mathbf{K}'$
by an equivalent $2$-representation $\mathbf{L}$ in which there is an analogue of $\Psi'$ 
(see the definition of $\Theta$ below) that
is, additionally, a strict involution.
Define a small category $\mathbf{L}(\mathtt{i})$ as follows:
\begin{itemize}
\item objects  in $\mathbf{L}(\mathtt{i})$  are all $4$-tuples $(X,Y,\alpha,\beta)$, where we have
$X,Y\in \mathbf{K}'(\mathtt{i})$, while $\alpha:X\to \Psi'(Y)$ and $\beta:Y\to \Psi'(X)$ are 
isomorphisms such that the following conditions are satisfied
\begin{equation}\label{eqnn9}
\left\{ 
\begin{array}{rcl} 
\eta_{Y}^{-1}\circ_1 {\Psi'}(\alpha)\circ_1\beta&=&\mathrm{id}_Y,\\
\beta\circ_1\eta_{Y}^{-1}\circ_1 {\Psi'}(\alpha)&=&\mathrm{id}_{\Psi'(X)},\\ 
\eta_{X}^{-1}\circ_1{\Psi'}(\beta)\circ_1\alpha&=&\mathrm{id}_X,\\
\alpha\circ_1\eta_{X}^{-1}\circ_1{\Psi'}(\beta)&=&\mathrm{id}_{\Psi'(Y)}. 
\end{array}
\right.
\end{equation}
\item morphisms in $\mathbf{L}(\mathtt{i})$ from $(X,Y,\alpha,\beta)$ to $(X',Y',\alpha',\beta')$
are pairs $(\zeta,\xi)$, where $\zeta:X\to X'$ and $\xi:Y\to Y'$ are morphisms in 
$\mathbf{K}'(\mathtt{i})$ such that the diagrams
\begin{displaymath}
\xymatrix{
X\ar[rr]^{\alpha}\ar[d]_{\zeta}&&\Psi(Y)\ar[d]^{\Psi'(\xi)}\\
X'\ar[rr]^{\alpha'}&&\Psi'(Y')\\
} \quad\text{ and }\quad
\xymatrix{
Y\ar[rr]^{\beta}\ar[d]_{\xi}&&\Psi(X)\ar[d]^{\Psi'(\zeta)}\\
Y'\ar[rr]^{\beta'}&&\Psi'(X')\\
} 
\end{displaymath}
commute;
\item the composition and identity morphisms are the obvious ones.
\end{itemize}
The category $\mathbf{L}(\mathtt{i})$ comes equipped with an action of $\cQ_4$, defined component-wise,
using the action of $\cQ_4$ on $\mathbf{K}(\mathtt{i})$. This is well-defined as the
$2$-natural transformation $\Psi$ is strict by Lemma~\ref{lem31} and, moreover, $\eta$ is a modification. 
We denote the corresponding $2$-representation of $\cQ_4$ by $\mathbf{L}(\mathtt{i})$. 

\begin{lemma}\label{lemnequiv}
Restriction to the first component of a quadruple defines a 
strict $2$-natural transformation  $\Upsilon:\mathbf{L}\to \mathbf{K}'$.
This $\Upsilon$ is an equivalence. 
\end{lemma}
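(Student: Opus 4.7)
The plan is to verify in sequence: strict $2$-naturality of $\Upsilon$, essential surjectivity, faithfulness, and fullness. Strict $2$-naturality is immediate, because the $\cQ_4$-action on $\mathbf{L}(\mathtt{i})$ is by definition the $\cQ_4$-action on $\mathbf{K}'(\mathtt{i})$ applied to each component of a quadruple; projection to the first component therefore strictly intertwines the two actions on objects and on morphisms.

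For essential surjectivity, given $X\in\mathbf{K}'(\mathtt{i})$ I consider the quadruple $(X,\Psi'(X),\eta_X,\mathrm{id}_{\Psi'(X)})$. After substitution, two of the four identities in \eqref{eqnn9} hold tautologically, while the remaining two each reduce to the single equation $\Psi'(\eta_X)=\eta_{\Psi'(X)}$, which is precisely the coherence furnished by Proposition~\ref{prop-particularPhi}. Hence this quadruple lies in $\mathbf{L}(\mathtt{i})$, and it projects to $X$ under $\Upsilon$.

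For faithfulness, the first commuting square in the definition of a morphism in $\mathbf{L}(\mathtt{i})$ forces $\Psi'(\xi)=\alpha'\circ_1\zeta\circ_1\alpha^{-1}$; since $\Psi'$ is a strict automorphism of $\mathbf{K}'(\mathtt{i})$ by construction of $\mathbf{K}'$, this determines $\xi$ uniquely from $\zeta$, so $\Upsilon$ is faithful.

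The main obstacle is fullness. Given $\zeta\colon X\to X'$, I define $\xi:=(\Psi')^{-1}(\alpha'\circ_1\zeta\circ_1\alpha^{-1})$, so that the first commuting square holds by construction; the second square $\Psi'(\zeta)\circ_1\beta=\beta'\circ_1\xi$ must still be checked. Using \eqref{eqnn9} one rewrites $\beta=\Psi'(\alpha)^{-1}\circ_1\eta_Y$ and analogously $\beta'=\Psi'(\alpha')^{-1}\circ_1\eta_{Y'}$; applying $\Psi'$ to both sides of the desired equality and invoking the naturality of $\eta$ in the form $(\Psi')^2(f)=\eta_{Z'}\circ_1 f\circ_1\eta_Z^{-1}$ for $f\colon Z\to Z'$, combined with the identity $\Psi'(\eta_Z)=\eta_{\Psi'(Z)}$ from Proposition~\ref{prop-particularPhi}, both sides collapse to $\alpha'\circ_1\zeta\circ_1\alpha^{-1}$. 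This establishes fullness and finishes the proof.
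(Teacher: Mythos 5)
Your proof is correct and follows essentially the same route as the paper's: both reduce everything to the equations \eqref{eqnn9}, the naturality of $\eta$, and the coherence $\Psi'(\eta)=\eta_{\Psi'}$ supplied by Proposition~\ref{prop-particularPhi}. You are somewhat more explicit in places --- the paper dismisses the fullness/faithfulness check as easy and proves existence of $\beta$ for an arbitrary triple $(X,Y,\alpha)$ rather than exhibiting the canonical preimage $(X,\Psi'(X),\eta_X,\mathrm{id}_{\Psi'(X)})$ --- but the substance of the argument is the same.
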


\begin{proof}
That the restriction in question is a strict $2$-natural transformation is clear by construction.
We need to check that it is an equivalence. For this we need to check two things. The first one is
the fact that $\beta$ is uniquely determined by $\alpha$.
and that $\xi$ is uniquely determined by  $\zeta$. 
This follows easily from the definitions and the equations in \eqref{eqnn9}. 
The second thing to check is that, given $X$, $Y$ and $\alpha$,
there is a $\beta$ such that \eqref{eqnn9} is satisfied.

By assumptions, we have $X\cong \Psi'(Y)$. The first two equations in \eqref{eqnn9} just say that 
$\beta$ and $\eta_{Y}^{-1}\circ_1 {\Psi'}(\alpha)$ are mutual inverses in 
$\mathrm{Hom}_{\mathbf{L}(\mathtt{i})}(X,\Psi'(Y))$
and $\mathrm{Hom}_{\mathbf{L}(\mathtt{i})}(\Psi'(Y),X)$. Therefore these two equations
describe equivalent conditions on $\alpha$ and $\beta$. Similarly, the two last equations in 
\eqref{eqnn9} describe equivalent conditions on $\alpha$ and $\beta$. It remains to check that
the first equation is equivalent to the last one. For this we apply $\Psi'$ to the first equation
and compare the outcome with the last equation, taking into account that $\Psi'(\beta)$ is an
isomorphism. We get
\begin{equation}\label{eqnn10}
\Psi'(\eta_{Y}^{-1})\circ_1 {\Psi'}^2(\alpha)=\alpha\circ_1\eta_{X}^{-1}. 
\end{equation}
As $\Psi'(\eta_{Y})=\eta_{\Psi'(Y)}$, by our choice of $\eta$ and Proposition~\ref{prop-particularPhi},
the equality in \eqref{eqnn10} holds due to naturality of $\eta$. The claim of the lemma follows.
\end{proof}

Define an endofunctor $\Theta$ on $\mathbf{L}(\mathtt{i})$ by sending $(X,Y,\alpha,\beta)$ to 
$(Y,X,\beta,\alpha)$ with the obvious action on morphisms. From all symmetries in  the 
definition of $\mathbf{L}(\mathtt{i})$ it follows that $\Theta$ is a strict involution 
and it also strictly commutes with the action of $\cQ_4$.

Finally, consider the category $\mathbf{N}(\mathtt{i})$ defined as follows:
\begin{itemize}
\item $\mathbf{N}(\mathtt{i})$ has the same objects as $\mathbf{L}(\mathtt{i})$,
\item morphisms in  $\mathbf{N}(\mathtt{i})$ are defined, for objects $X,Y\in\mathbf{L}(\mathtt{i})$, via
\begin{displaymath}
\mathrm{Hom}_{\mathbf{N}(\mathtt{i})}(X,Y):=
\mathrm{Hom}_{\mathbf{L}(\mathtt{i})}(X,Y)\oplus  \mathrm{Hom}_{\mathbf{L}(\mathtt{i})}(X,\Theta(Y)),
\end{displaymath}
\item composition and identity morphisms in $\mathbf{N}(\mathtt{i})$ are induced from those
in $\mathbf{L}(\mathtt{i})$ in the obvious way, see \cite[Definition~2.3]{CM} for details.
\end{itemize}

\begin{proposition}\label{prop27}
{\hspace{2mm}}

\begin{enumerate}[$($i$)$]
\item\label{prop27.1} The category  $\mathbf{N}(\mathtt{i})$ is a finitary $\mathbb{C}$-linear category.
\item\label{prop27.2} The category  $\mathbf{N}(\mathtt{i})$ is equipped with an 
action of $\cQ_4$ induced from that on $\mathbf{L}(\mathtt{i})$.
\item\label{prop27.3} The  obvious functor
$\Xi:\mathbf{L}(\mathtt{i})\to \mathbf{N}(\mathtt{i})$ is a strict $2$-natural transformation.
\end{enumerate}
\end{proposition}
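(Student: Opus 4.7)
The plan is to verify the three parts in order, following the orbit category construction of \cite[Definition~2.3]{CM} and exploiting that $\Theta$ is a strict involution commuting strictly with the $\cQ_4$-action.

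For part~\eqref{prop27.1}, additivity and $\mathbb{C}$-linearity of $\mathbf{N}(\mathtt{i})$ are immediate from the definition of its hom spaces, and finite-dimensionality of hom spaces follows because, by Lemma~\ref{lemnequiv}, $\mathbf{L}(\mathtt{i})$ is equivalent to $\mathbf{K}'(\mathtt{i})$ and hence to $\mathbf{Q}(\mathtt{i})$, which is finitary. Let $X_1$ and $X_2$ be representatives of the two indecomposable isomorphism classes in $\mathbf{L}(\mathtt{i})$; since $\Psi'$ swaps these classes, so does $\Theta$, and we may take $X_2=\Theta(X_1)$. Using $B\cong D\oplus D$, we have $\mathrm{Hom}_{\mathbf{L}}(X_1,X_2)=0$ and $\mathrm{End}_{\mathbf{L}}(X_i)\cong D$. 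Hence $\mathrm{End}_{\mathbf{N}}(X_1)\cong D$ is local, so $X_1$ remains indecomposable in $\mathbf{N}(\mathtt{i})$; moreover, $\mathrm{Hom}_{\mathbf{N}}(X_1,X_2)\cong D$ contains an element, corresponding to $\mathrm{id}_{X_1}$ in the extra summand, which provides an isomorphism $X_1\cong X_2$ in $\mathbf{N}(\mathtt{i})$. This shows $\mathbf{N}(\mathtt{i})$ is finitary with a single isomorphism class of indecomposables.

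For part~\eqref{prop27.2}, the strict commutativity $\mathrm{F}\circ\Theta=\Theta\circ \mathrm{F}$ for any $1$-morphism $\mathrm{F}$ of $\cQ_4$ implies that $\mathrm{F}$ sends $\mathrm{Hom}_{\mathbf{L}}(X,\Theta(Y))$ into $\mathrm{Hom}_{\mathbf{L}}(\mathrm{F}X,\Theta(\mathrm{F}Y))$. We therefore define the $\cQ_4$-action on $\mathbf{N}(\mathtt{i})$ summand-wise on the hom space decomposition, and analogously for $2$-morphisms. Compatibility with the composition in $\mathbf{N}(\mathtt{i})$, which only involves applying $\Theta$ to morphisms from the extra summand, is then routine. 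For part~\eqref{prop27.3}, the functor $\Xi$ is the identity on objects and the canonical embedding into the first summand on morphisms; functoriality is visible from the composition formula in $\mathbf{N}(\mathtt{i})$, and strictness of $\Xi$ as a $2$-natural transformation is immediate because the $\cQ_4$-action on $\mathbf{N}(\mathtt{i})$ was defined so as to preserve the first summand.

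The main obstacle is part~\eqref{prop27.1}: the orbit category construction can in principle fail to be Krull-Schmidt or introduce non-trivial idempotents coming from the extra summand of endomorphism rings. In the case at hand, the vanishing $\mathrm{Hom}_{\mathbf{L}}(X_1,X_2)=0$ makes the explicit computation $\mathrm{End}_{\mathbf{N}}(X_1)\cong D$ transparent and bypasses any such complications.
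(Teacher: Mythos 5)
Your proposal is correct and follows essentially the same route as the paper: both arguments hinge on the observations that $\Theta$ swaps the two isomorphism classes of indecomposables and that $\mathrm{Hom}_{\mathbf{L}(\mathtt{i})}(X,\Theta(X))=0$, so that endomorphism algebras in $\mathbf{N}(\mathtt{i})$ coincide with those in $\mathbf{L}(\mathtt{i})$ and remain local, while parts~(ii) and~(iii) follow from the strictness of $\Theta$ and the component-wise definition of the action. Your added details (the explicit identification $\mathrm{End}_{\mathbf{N}}(X_1)\cong D$ via $B\cong D\oplus D$, and the isomorphism $X_1\cong X_2$ in $\mathbf{N}(\mathtt{i})$ coming from $\mathrm{id}_{X_1}$ in the extra summand) are consistent with, and indeed anticipate, the rank-one observation the paper makes immediately after the proposition.
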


\begin{proof}
It is well-known that the category $\mathbf{N}(\mathtt{i})$ is an additive 
$\mathbb{C}$-linear category, see \cite[Page~552]{Ke} and \cite[Section~2]{CM}. 
For each indecomposable object $X$ in $\mathbf{L}(\mathtt{i})$, the object $\Theta(X)$ is
not isomorphic to $X$, moreover, $\mathrm{Hom}_{\mathbf{L}(\mathtt{i})}(X,\Theta(X))=0$. 
From the construction it is now easy to see that that the endomorphism algebra of $X$
in $\mathbf{N}(\mathtt{i})$ is the same as in $\mathbf{L}(\mathtt{i})$, in particular, it is
local. Therefore $X$ is indecomposable in $\mathbf{N}(\mathtt{i})$ and thus the fact that 
$\mathbf{N}(\mathtt{i})$ is idempotent split and Krull-Schmidt with finitely many indecomposable
objects follows from the corresponding properties in $\mathbf{L}(\mathtt{i})$,
by additivity and $\mathbb{C}$-linearity. This proves claim~\eqref{prop27.1}. 

Claims \eqref{prop27.2} and \eqref{prop27.3} follow directly by construction, 
taking into account the fact that the $2$-natural transformation $\Theta$ is strict.
\end{proof}

Proposition~\ref{prop27} gives us the $2$-representation $\mathbf{N}$ of $\cQ_4$. This 
$2$-representation has rank one. Being of rank one, this $2$-representation is transitive. 
As the underlying algebra of $\mathbf{L}(\mathtt{i})$ is $D\oplus D$, from the 
proof of Proposition~\ref{prop27} we see that the underlying algebra of
$\mathbf{N}(\mathtt{i})$ is $D$. Therefore the restriction of $\mathbf{N}$ to the
$2$-subcategory $\cA_4$ is simple transitive. This means that $\mathbf{N}$ itself is simple transitive as well.

\subsection{Some abstract nonsense on orbit categories}\label{s5.9}

Let $G$ be an abelian group and $\mathcal{Q}$ a category equipped with a (strict) $G$-action,
$g\mapsto \mathrm{F}_g:\mathcal{Q}\to \mathcal{Q}$.
Following \cite[Definition~2.3]{CM} (see also \cite{As}), we define the {\em skew category} $\mathcal{Q}[G]$
as follows:
\begin{itemize}
\item $\mathcal{Q}[G]$ has the same objects as $\mathcal{Q}$;
\item $\displaystyle \mathcal{Q}[G](i,j)=\bigoplus_{g\in G}\mathcal{Q}(i,\mathrm{F}_g(j))$;
\item composition in $\mathcal{Q}[G]$ is given by composition in $\mathcal{Q}$, after adjustment.
\end{itemize}
We denote by $\mathrm{T}:\mathcal{Q}\to\mathcal{Q}[G]$ the natural inclusion.
Note that, in Subsection~\ref{s5.3}, we have
$\mathbf{N}(\mathtt{i})=\mathbf{L}(\mathtt{i})[G]$, where $G=\{\mathrm{Id}_{\mathbf{L}(\mathtt{i})},\Theta\}$.

\begin{lemma}\label{lemnn57}
For any $g\in G$, there exists a natural isomorphism 
$\xi(g):\mathrm{T}\cong \mathrm{T}\circ \mathrm{F}_g$ 
of functors from $\mathcal{Q}$ to $\mathcal{Q}[G]$
such that $\xi(e)$ is the identity and the 
following diagram commutes, for all $g,h\in G$:
\begin{equation}\label{eqnn11}
\xymatrix{ 
\mathrm{T}\ar[rr]^{\xi(gh)}\ar[d]_{\xi(h)} && \mathrm{T}\circ \mathrm{F}_{gh}\ar@{=}[d]\\
\mathrm{T}\circ \mathrm{F}_{h}\ar[rr]^{\xi(g)\circ_0\mathrm{id}_{\mathrm{F}_{h}}} && 
\mathrm{T}\circ \mathrm{F}_{g}\circ \mathrm{F}_{h}
}
\end{equation}
\end{lemma}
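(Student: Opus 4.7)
The plan is to construct $\xi(g)_i$ object-wise as the identity morphism of $i$, placed into the appropriate summand of the skew Hom-space. Explicitly, by definition
\begin{displaymath}
\mathcal{Q}[G](i,\mathrm{F}_g(i)) \;=\; \bigoplus_{h\in G}\mathcal{Q}(i,\mathrm{F}_{hg}(i)),
\end{displaymath}
and strictness of the $G$-action makes the summand with $h=g^{-1}$ literally $\mathcal{Q}(i,\mathrm{F}_e(i))=\mathcal{Q}(i,i)$. I would take $\xi(g)_i:=\mathrm{id}_i$ sitting in this $g^{-1}$-summand. For $g=e$ this places $\mathrm{id}_i$ in the $e$-summand, which is by construction the identity morphism of $i$ in $\mathcal{Q}[G]$, so $\xi(e)=\mathrm{id}_{\mathrm{T}}$ is automatic.

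Verifying naturality of $\xi(g)$ is then a direct unwinding of the composition rule in $\mathcal{Q}[G]$: composing $\alpha$ in the $p$-summand with $\beta$ in the $q$-summand produces $\mathrm{F}_p(\beta)\circ\alpha$ in the $pq$-summand. For $f\colon i\to j$ in $\mathcal{Q}$, the composite $\xi(g)_j\circ\mathrm{T}(f)$ lands in the $e\cdot g^{-1}=g^{-1}$-summand with value $\mathrm{id}_j\circ f=f$, while $\mathrm{T}(\mathrm{F}_g(f))\circ\xi(g)_i$ lands in the $g^{-1}\cdot e=g^{-1}$-summand with value $\mathrm{F}_{g^{-1}}(\mathrm{F}_g(f))\circ\mathrm{id}_i=f$, using strictness $\mathrm{F}_{g^{-1}}\mathrm{F}_g=\mathrm{Id}$; commutativity of $G$ is what guarantees that the two summand indices coincide. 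Invertibility of $\xi(g)_i$ is checked the same way, its inverse being $\mathrm{id}_{\mathrm{F}_g(i)}$ in the $g$-summand of $\mathcal{Q}[G](\mathrm{F}_g(i),i)$.

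For the coherence square \eqref{eqnn11}, I would evaluate both composites at an object $i$. The left-bottom composite equals $\xi(g)_{\mathrm{F}_h(i)}\circ\xi(h)_i$, with $\xi(h)_i=\mathrm{id}_i$ in the $h^{-1}$-summand and $\xi(g)_{\mathrm{F}_h(i)}=\mathrm{id}_{\mathrm{F}_h(i)}$ in the $g^{-1}$-summand. The skew-category composition rule places this in the $h^{-1}g^{-1}=(gh)^{-1}$-summand of $\mathcal{Q}[G](i,\mathrm{F}_{gh}(i))$ with value $\mathrm{F}_{h^{-1}}(\mathrm{id}_{\mathrm{F}_h(i)})\circ\mathrm{id}_i=\mathrm{id}_i$, which matches $\xi(gh)_i$ on the nose.

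The proof has no serious obstacle beyond bookkeeping: everything is encoded in tracking the summand indices and invoking the strict identities $\mathrm{F}_g\mathrm{F}_h=\mathrm{F}_{gh}$. The assumption that $G$ is abelian is used exactly once, but essentially, to identify $h^{-1}g^{-1}$ with $(gh)^{-1}$ so that the coherence square actually closes up on the level of summand indices.
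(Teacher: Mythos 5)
Your construction is exactly the paper's: $\xi(g)_i$ is the identity morphism of $i$ viewed in the summand $\mathcal{Q}(i,i)=\mathcal{Q}(i,\mathrm{F}_{g^{-1}}(\mathrm{F}_g(i)))$ of $\mathcal{Q}[G](i,\mathrm{F}_g(i))$, and where the paper merely asserts that naturality, $\xi(e)=\mathrm{id}$ and \eqref{eqnn11} are then clear from the construction, you carry out the verifications, correctly. One small quibble with your closing remark: under the composition convention you state, the summand index for the left--bottom composite in \eqref{eqnn11} is $h^{-1}g^{-1}=(gh)^{-1}$, an identity valid in any group, so commutativity of $G$ is not in fact what makes the square close (nor is it needed for the naturality check, where the indices are $e\cdot g^{-1}$ and $g^{-1}\cdot e$).
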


\begin{proof}
On any object $i\in \mathcal{Q}$, the value of $\xi(g)$ is given by the identity 
morphism $\varepsilon_i\in \mathcal{Q}(i,i)$ which is, at the same time, 
an element in $\mathcal{Q}[G](i,\mathrm{F}_g(i))$. It is clear from the construction that 
$\xi(g)$ defined in this way is a natural isomorphism, that 
$\xi(e)$ is the identity and that \eqref{eqnn11} commutes.
\end{proof}

The following statement is similar to \cite[Proposition~2.6]{As}.

\begin{proposition}\label{propnn58}
Let $\mathcal{Q}'$ be a category and $\mathrm{K}:\mathcal{Q}\to \mathcal{Q}'$ be a functor.
Assume  that, for each $g\in G$, there is an isomorphism $\tau(g):\mathrm{K}\to \mathrm{K}\circ \mathrm{F}_g$ 
such that $\tau(e)$ is the identity and the following diagram commutes, for all $g,h\in G$:
\begin{equation}\label{eqnn58}
\xymatrix{ 
\mathrm{K}\ar[rr]^{\tau(gh)}\ar[d]_{\tau(h)} && \mathrm{K}\circ \mathrm{F}_{gh}\ar@{=}[d]\\
\mathrm{K}\circ \mathrm{F}_{h}\ar[rr]^{\tau(g)\circ_0\mathrm{id}_{\mathrm{F}_{h}}} && 
\mathrm{K}\circ \mathrm{F}_{g}\circ \mathrm{F}_{h}
}
\end{equation}
Then there is a functor $\overline{\mathrm{K}}:\mathcal{Q}[G]\to \mathcal{Q}'$
such that $\mathrm{K}=\overline{\mathrm{K}}\circ\mathrm{T}$.
\end{proposition}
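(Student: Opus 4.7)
My plan is to construct $\overline{\mathrm{K}}$ explicitly on objects and on each homogeneous component $\mathcal{Q}(i,\mathrm{F}_g(j))$ of the morphism spaces of $\mathcal{Q}[G]$, and then verify that the resulting assignment is functorial.

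On objects, I would set $\overline{\mathrm{K}}(i):=\mathrm{K}(i)$. For a morphism, note that every $\varphi\in\mathcal{Q}[G](i,j)$ decomposes uniquely as $\varphi=\sum_{g\in G}\varphi_g$ with $\varphi_g\in\mathcal{Q}(i,\mathrm{F}_g(j))$. On each summand I would define
\begin{displaymath}
\overline{\mathrm{K}}(\varphi_g):=\tau(g)_j^{-1}\circ\mathrm{K}(\varphi_g)\colon\mathrm{K}(i)\longrightarrow\mathrm{K}(j),
\end{displaymath}
and extend linearly. Since $\tau(e)=\mathrm{id}_{\mathrm{K}}$ by hypothesis, the identity morphism of $i$ in $\mathcal{Q}[G]$ (which sits in the $g=e$ summand as $\mathrm{id}_i$) is sent to $\mathrm{id}_{\mathrm{K}(i)}$. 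Moreover, restricting to the $g=e$ summand shows immediately that $\mathrm{K}=\overline{\mathrm{K}}\circ\mathrm{T}$.

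The main thing to check is compatibility with composition. Recall that the composition in $\mathcal{Q}[G]$ is defined on homogeneous components by $\psi_h\cdot\varphi_g:=\mathrm{F}_g(\psi_h)\circ\varphi_g$ for $\varphi_g\in\mathcal{Q}(i,\mathrm{F}_g(j))$ and $\psi_h\in\mathcal{Q}(j,\mathrm{F}_h(l))$, which lies in $\mathcal{Q}(i,\mathrm{F}_{gh}(l))$. I would expand both sides: on one hand,
\begin{displaymath}
\overline{\mathrm{K}}(\psi_h\cdot\varphi_g)=\tau(gh)_l^{-1}\circ\mathrm{K}(\mathrm{F}_g(\psi_h))\circ\mathrm{K}(\varphi_g),
\end{displaymath}
and on the other hand,
\begin{displaymath}
\overline{\mathrm{K}}(\psi_h)\circ\overline{\mathrm{K}}(\varphi_g)=\tau(h)_l^{-1}\circ\mathrm{K}(\psi_h)\circ\tau(g)_j^{-1}\circ\mathrm{K}(\varphi_g).
\end{displaymath}
Naturality of $\tau(g)\colon\mathrm{K}\to\mathrm{K}\circ\mathrm{F}_g$ applied to the arrow $\psi_h\colon j\to\mathrm{F}_h(l)$ gives
\begin{displaymath}
\mathrm{K}(\mathrm{F}_g(\psi_h))\circ\tau(g)_j=\tau(g)_{\mathrm{F}_h(l)}\circ\mathrm{K}(\psi_h),
\end{displaymath}
and evaluating the cocycle diagram \eqref{eqnn58} at the object $l$ yields the identity
\begin{displaymath}
\tau(gh)_l=\tau(g)_{\mathrm{F}_h(l)}\circ\tau(h)_l.
\end{displaymath}
Combining these two identities proves the desired equality, so $\overline{\mathrm{K}}$ is a functor.

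I expect that the only genuinely non-trivial point is the composition check above, and specifically making sure that the cocycle condition \eqref{eqnn58} matches up with the naturality square of $\tau(g)$ with the conventions used for composition in $\mathcal{Q}[G]$. Apart from this bookkeeping, the construction is forced by the requirement $\mathrm{K}=\overline{\mathrm{K}}\circ\mathrm{T}$ together with the prescription that $\overline{\mathrm{K}}(\xi(g)_j)=\tau(g)_j$, where $\xi(g)$ is the canonical isomorphism from Lemma~\ref{lemnn57}.
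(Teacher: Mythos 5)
Your proposal is correct and follows essentially the same route as the paper: define $\overline{\mathrm{K}}$ to agree with $\mathrm{K}$ on objects, set $\overline{\mathrm{K}}(\varphi_g)=\tau(g)_j^{-1}\circ\mathrm{K}(\varphi_g)$ on the homogeneous component indexed by $g$, and verify compatibility with composition by combining the naturality square of $\tau(g)$ with the cocycle identity $\tau(gh)_l=\tau(g)_{\mathrm{F}_h(l)}\circ\tau(h)_l$ coming from \eqref{eqnn58}. This is exactly the computation carried out in the paper's proof, with the same composition convention for $\mathcal{Q}[G]$.
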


\begin{proof}
On objects and morphisms from $\mathcal{Q}(i,j)$, we define $\overline{\mathrm{K}}$
as $\mathrm{K}$. This, in particular, guarantees $\mathrm{K}=\overline{\mathrm{K}}\circ\mathrm{T}$.
It remains to define $\overline{\mathrm{K}}$ on morphisms from $\mathcal{Q}(i,\mathrm{F}_g(j))$,
considered as a subset of $\mathcal{Q}[G](i,j)$,
where $g\in G$ is different from the identity element. 

For every $\alpha\in \mathcal{Q}(i,\mathrm{F}_g(j))\subset \mathcal{Q}[G](i,j)$, we define
$\overline{\mathrm{K}}(\alpha):=\tau(g)_{j}^{-1}\circ {\mathrm{K}}(\alpha)$
(this, in particular, agrees with the previous paragraph in the case  $g=e$). 
Now we have to check the functoriality of this construction.

Let $\alpha\in \mathcal{Q}(i,\mathrm{F}_g(j))\subset \mathcal{Q}[G](i,j)$ and 
$\beta\in \mathcal{Q}(j,\mathrm{F}_h(k))\subset \mathcal{Q}[G](i,k)$,
where $g,h\in G$. Then we compute:
\begin{displaymath}
\begin{array}{rcll}
\overline{\mathrm{K}}(\beta)\circ \overline{\mathrm{K}}(\alpha)&=&
\tau(h)_{k}^{-1}\circ {\mathrm{K}}(\beta)\circ 
\tau(g)_{j}^{-1}\circ {\mathrm{K}}(\alpha)& \text{(definition)}\\ &=&
\tau(h)_{k}^{-1}\circ \tau(g)_{\mathrm{F}_h(k)}^{-1}\circ 
\mathrm{K}({\mathrm{F}_g}(\beta))\circ {\mathrm{K}}(\alpha)& \text{(naturality of $\tau(g)$)}\\&=&
\tau(h)_{k}^{-1}\circ \tau(g)_{\mathrm{F}_h(k)}^{-1}\circ 
\mathrm{K}({\mathrm{F}_g}(\beta)\circ \alpha)& \text{(functoriality of $\mathrm{K}$)}\\&=&
\tau(gh)_{k}^{-1}\circ 
{\mathrm{K}}(\mathrm{F}_g(\beta)\circ \alpha)& \text{(by \eqref{eqnn58}).}\\ 
\end{array}
\end{displaymath}
This completes the proof.
\end{proof}

\subsection{Proof of Theorem~\ref{thm21}}\label{s5.8}

In the rank two case, the combinatorics of the action is described by 
Lemma~\ref{lem23}\eqref{lem23.2}. Consider the additive closure of 
$\theta_s\, L_1$ and $\theta_{sts}\, L_1$. It is stable under 
the action of $\cQ_4$. By Lemma~\ref{lem25}, both $\theta_s\, L_1$ and $\theta_{sts}\, L_1$
are projective modules. A usual argument involving the action of  $\theta_s$ 
(cf. Corollary~\ref{cornew01}) shows that they
are indecomposable. Therefore $P_1\cong \theta_s\, L_1$ and $P_2\cong \theta_{sts}\, L_1$.
Now the fact that $\mathbf{M}$ is isomorphic to $\mathbf{C}_{\mathcal{L}_s}$ is proved using
arguments similar to the ones in Subsection~\ref{s4.9}.

Let now $\mathbf{M}$ be  a simple transitive $2$-representation of $\cQ_4$ of rank one.
Consider its abelianization $\overline{\mathbf{M}}$ and let $\Phi$ be the unique strict 
$2$-natural transformation from $\mathbf{P}_{\mathtt{i}}$ to $\overline{\mathbf{M}}$ which
sends $\mathbb{1}_{\mathtt{i}}$ to a (unique up to isomorphism) simple object $L$ in 
$\overline{\mathbf{M}}(\mathtt{i})$. In the same way as we did several times above, 
$\Phi$ restricts to a strict $2$-natural transformation from $\mathbf{C}_{\mathcal{L}_s}$
to $\mathbf{M}^{(1)}$ which sends both $\theta_s$ and $\theta_{sts}$ to indecomposable
(but now isomorphic) objects. 

Applying abelianization to the above, we get a strict $2$-natural 
transformation from $\mathbf{Q}^{(k)}$
to $\mathbf{M}^{(k+1)}$, for each $k\geq 0$, which is compatible with the canonical
inclusions $\mathbf{Q}^{(k)}\hookrightarrow\mathbf{Q}^{(k+1)}$
and $\mathbf{M}^{(k+1)}\hookrightarrow\mathbf{M}^{(k+2)}$. Taking the limit, gives a 
strict $2$-natural transformation $\Gamma$ from the $2$-representation $\mathbf{K}$ from 
Subsection~\ref{s5.3} to the limit $\hat{\mathbf{M}}$ of the inductive system
\begin{displaymath}
\mathbf{M}^{(0)}{\longrightarrow}
\mathbf{M}^{(1)}{\longrightarrow}
\mathbf{M}^{(2)}{\longrightarrow}\dots. 
\end{displaymath}
Note that $\mathbf{K}$ is equivalent to $\mathbf{C}_{\mathcal{L}_s}$ while 
$\hat{\mathbf{M}}$ is equivalent to $\mathbf{M}$. 
This gives us the solid part of the diagram
\begin{equation}\label{eqnn59}
\xymatrix{
\mathbf{L}\ar[rr]^{\Pi\circ \Upsilon}\ar[d]_{\Xi}&&\mathbf{K}\ar[d]^{\Gamma}\\
\mathbf{N}\ar@{..>}[rr]_{\Delta}&&\hat{\mathbf{M}}.
}
\end{equation}

Let $X_1$ be an indecomposable object in $\mathbf{L}(\mathtt{i})$.
Then $\Theta(X_1)$ is also indecomposable and not isomorphic to $X_1$.
Together $X_1$ and $\Theta(X_1)$ generate $\mathbf{L}(\mathtt{i})$ as an additive category.
From the construction above we see that the objects 
$\Gamma\circ\Pi\circ\Upsilon(X_1)$ and $\Gamma\circ\Pi\circ\Upsilon\circ \Theta(X_1)$
are isomorphic in $\hat{\mathbf{M}}(\mathtt{i})$. 
Fixing an isomorphism between these two object extends to an invertible modification 
$\nu: \Gamma\circ\Pi\circ\Upsilon\to \Gamma\circ\Pi\circ\Upsilon\circ \Theta$.
As $\Theta$ is a strict involution, it follows that the modification $\nu$, together with the identity modification 
on $\Gamma\circ\Pi\circ\Upsilon$ satisfy all conditions of Proposition~\ref{propnn58}.
In particular, the following diagram establishes \eqref{eqnn58} in this case:
\begin{displaymath}
\xymatrix{ 
\Gamma\circ\Pi\circ\Upsilon\ar[d]_{\nu}\ar@/^/[rr]^{\nu} && \Gamma\circ\Pi\circ\Upsilon\circ \Theta
\ar@/^/[ll]^{\nu^{-1}}\ar[d]^{\nu^{-1}}\\
\Gamma\circ\Pi\circ\Upsilon\circ \Theta\ar@/^/[rr]^{\nu^{-1}} && 
\Gamma\circ\Pi\circ\Upsilon\circ \Theta\circ \Theta\ar@/^/[ll]^{\nu}.\\
}
\end{displaymath}

From Proposition~\ref{propnn58}, we thus get the dotted functor
$\Delta$ in \eqref{eqnn59} which makes \eqref{eqnn59} a strictly commutative diagram. 
As $\Theta$ strictly commutes with the action of $\cQ_4$, from the construction in 
Proposition~\ref{propnn58} we see that $\Delta$ is, in fact, a strict $2$-natural 
transformation. By construction,
$\Delta$ maps a (unique up to isomorphism) indecomposable generator, say $X$, of 
$\mathbf{N}(\mathtt{i})$ to a (unique up to isomorphism) 
indecomposable generator, say $Y$, of $\hat{\mathbf{M}}(\mathtt{i})$. From our analysis 
above we know that the endomorphism algebras of both $X$ and $Y$ are isomorphic to $D$.
Restriction of $\Delta$ to the action of $\cA_4$ is, therefore, an equivalence
as both these restricted $2$-representations must be equivalent to the cell $2$-representation of
$\cA_4$. This implies that $\Delta$ is an
equivalence between $\mathbf{N}$ and $\hat{\mathbf{M}}$. Hence 
$\mathbf{N}$ and $\mathbf{M}$ are equivalent as well. The proof is complete.


\noindent
M.~M.: Center for Mathematical Analysis, Geometry, and Dynamical Systems, Departamento de Matem{\'a}tica, 
Instituto Superior T{\'e}cnico, 1049-001 Lisboa, PORTUGAL \& Departamento de Matem{\'a}tica, FCT, 
Universidade do Algarve, Campus de Gambelas, 8005-139 Faro, PORTUGAL, email: {\tt mmackaay\symbol{64}ualg.pt}

\noindent

V.~M.: Department of Mathematics, Uppsala University, Box. 480,
SE-75106, Uppsala, SWEDEN, email: {\tt mazor\symbol{64}math.uu.se}

\end{document}